\newcommand{\R}{\mathbb{R}}
\newcommand{\Z}{\mathbb{Z}}
\DeclareMathOperator{\dom}{dom}
\DeclareMathOperator{\Diff}{Diff}
\DeclareMathOperator{\Hol}{Hol}
\DeclareMathOperator{\pr}{pr}
\newcommand{\cplx}[2]{\Omega_{#1}^\bullet(#2)}
\newcommand{\cl}[1]{\mathcal{#1}}
\newcommand{\fk}[1]{\mathfrak{#1}}
\newcommand{\pdd}[2]{\frac{\partial #1}{\partial #2}}
\newcommand{\rra}{\rightrightarrows}
\newcommand{\fiber}[2]{\tensor[_{#1}]{\times}{_{#2}}}
\newcommand{\define}[1]{\emph{#1}}
\theoremstyle{plain}
\newtheorem{theorem}                 {Theorem}[section]
\newtheorem{proposition}   [theorem] {Proposition}
\newtheorem{lemma}         [theorem] {Lemma}
\newtheorem{corollary}     [theorem] {Corollary}
\newtheorem*{theorem*}     {Theorem}
\newtheorem*{question}     {Question}
\theoremstyle{definition}
\newtheorem{definition}    [theorem]{Definition}
\theoremstyle{remark}
\newtheorem{remark}        [theorem]{Remark}
\newtheorem{example}       [theorem]{Example}
\def\eor{\unskip\ \hglue0mm\hfill$\diamond$\smallskip\goodbreak}
\def\eoe{\unskip\ \hglue0mm\hfill$\between$\smallskip\goodbreak}
\def\eod{\unskip\ \hglue0mm\hfill$\diamond$\smallskip\goodbreak}
\numberwithin{equation}{section}
\newif\ifdebug
\newcommand{\mute}[2] {\ifdebug {\textcolor{orange}{#1}} \fi}                          %to indicate there is invisible latex
\begin{document}
\title{Singular foliations through diffeology}
\author{David Miyamoto}
\address{University of Toronto}
\email{david.miyamoto@mail.utoronto.ca}
\date{\today}
\subjclass[2020]{Primary 53C12}
\keywords{singular foliations, diffeology, transverse equivalence}

\begin{abstract}
  A singular foliation is a partition of a manifold into leaves of perhaps varying dimension. Stefan and Sussmann carried out fundamental work on singular foliations in the 1970s. We survey their contributions, show how diffeological objects and ideas arise naturally in this setting, and highlight some consequences within diffeology. We then introduce a definition of transverse equivalence of singular foliations, following Molino's definition for regular foliations. We show that, whereas transverse equivalent singular foliations always have diffeologically diffeomorphic leaf spaces, the converse holds only for certain classes of singular foliations. We finish by showing that the basic cohomology of a singular foliation is invariant under transverse equivalence.
\end{abstract}

\maketitle

\section{Introduction}
\label{sec:introduction}

A singular foliation is a partition of a manifold into connected submanifolds, called leaves, of perhaps varying dimension, which fit together smoothly in an appropriate sense. Singular foliations lie beneath many structures one frequently encounters in geometry: the partition of a manifold into orbits of a connected smooth Lie group action, the partition of a Poisson manifold into symplectic leaves, and the partition of a manifold into the maximal integral submanifolds of an involutive distribution are all singular foliations. In the last example, the leaves of the resulting singular foliation all have the same dimension, and thus we obtain a regular foliation. A global version of the Frobenius theorem asserts that every regular foliation arises in this way.
\begin{theorem}[Global Frobenius]
  \label{thm:1}
  Let $M$ be a manifold. An involutive distribution $\Delta$ has integral submanifolds, and the partition of a manifold $M$ into the maximal integral submanifolds is a regular foliation $\cl{F}_\Delta$. Conversely, given a regular foliation $\cl{F}$, the associated distribution $T\cl{F}$ is involutive. The assignments
  \begin{equation*}
    \cl{F} \mapsto T\cl{F}, \quad \Delta \mapsto \cl{F}_\Delta
  \end{equation*}
  are inverses of each other.
\end{theorem}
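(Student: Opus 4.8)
The plan is to reduce the global statement to the \emph{local} Frobenius theorem and then assemble the local data into leaves. The analytic core is the local version: if $\Delta$ is an involutive distribution of rank $k$ on the $n$-manifold $M$, then around each point there is a chart $(U, (x^1, \dots, x^n))$ in which $\Delta|_U$ is spanned by $\partial/\partial x^1, \dots, \partial/\partial x^k$. I would establish this first. Choose a chart in which $\Delta_p$ projects isomorphically onto $\mathrm{span}(\partial/\partial x^1, \dots, \partial/\partial x^k)$, which persists on a neighborhood by continuity, and let $X_i$ be the unique section of $\Delta$ projecting to $\partial/\partial x^i$, so $X_i = \partial/\partial x^i + \sum_{a>k} b_i^a \, \partial/\partial x^a$. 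A direct computation shows $[X_i, X_j]$ has no $\partial/\partial x^1, \dots, \partial/\partial x^k$ component; but involutivity forces $[X_i, X_j] \in \Delta$, and the only such field is $0$, so the $X_i$ commute. The simultaneous flow-box theorem for commuting fields then produces the desired flat chart, in which the slices $\{x^{k+1} = c_{k+1}, \dots, x^n = c_n\}$ are integral submanifolds, giving local existence.

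Next I would globalize. Call a slice in a flat chart a \emph{plaque}, and declare two points equivalent when they are joined by a finite chain of overlapping plaques; the equivalence classes are the candidate leaves. First I would endow each class with the manifold structure coming from the plaque coordinates, verifying it is an immersed connected integral submanifold whose intrinsic topology is second countable. Then I would check \emph{maximality}: any connected integral submanifold has tangent spaces in $\Delta$, hence is locally contained in a single plaque, and therefore lies in one equivalence class. Collecting these leaves over all of $M$ yields the partition $\cl{F}_\Delta$, and the flat charts constitute a foliated atlas, so $\cl{F}_\Delta$ is a regular foliation.

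The converse is comparatively routine. Given a regular foliation $\cl{F}$ with its foliated atlas, the tangent spaces to the leaves assemble into a smooth rank-$k$ distribution $T\cl{F}$, which in each foliated chart is spanned by the commuting fields $\partial/\partial x^1, \dots, \partial/\partial x^k$; all their brackets vanish, so $T\cl{F}$ is involutive. For the bijection, starting from $\Delta$ I would note that the leaves of $\cl{F}_\Delta$ are integral submanifolds, so $T\cl{F}_\Delta = \Delta$ by the definition of integral submanifold; and starting from $\cl{F}$, setting $\Delta = T\cl{F}$, the leaves of $\cl{F}$ are connected integral submanifolds of $\Delta$ that are maximal by the characterization above, hence coincide with the leaves of $\cl{F}_\Delta$, giving $\cl{F}_\Delta = \cl{F}$.

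I expect the main obstacle to be the local Frobenius theorem, specifically the step where involutivity is converted into a commuting frame and then into coordinates via the simultaneous flow-box theorem. Everything afterward — building leaves from plaques, verifying the immersed-submanifold and foliation structures, and checking the two assignments are mutually inverse — is essentially bookkeeping, provided one is careful about the immersed (rather than embedded) nature of the leaves and the second countability of their intrinsic topology.
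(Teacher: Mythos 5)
Your proposal is correct, and it is the classical direct proof: local Frobenius via a commuting frame and the simultaneous flow-box, then globalization by plaque chains. The paper, however, does not prove Theorem \ref{thm:1} this way --- it states it as classical background and later \emph{recovers} it as the regular special case of the Stefan--Sussmann theorem (Theorem \ref{thm:5}, Corollary \ref{cor:2}): one shows that a distribution is integrable if and only if it is homogeneous ($\Delta = \overline{\Delta}$, i.e.\ invariant under the flows of its own tangent vector fields), and then observes, as in Remark \ref{rem:4}, that for constant-rank distributions homogeneity is equivalent to involutivity. Your route is self-contained and elementary, and it produces the flat charts explicitly; the paper's route requires the heavier machinery of arrows, pseudogroups, and orbit theorems, but in exchange handles the singular case uniformly and delivers for free the fact that leaves are weakly embedded, which the paper's Definition \ref{def:3} demands. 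That last point is the one place your sketch is slightly thin: your plaque-chain construction yields leaves as immersed submanifolds, and to match the paper's definition of a regular foliation you should either check weak embedding directly (a connected integral submanifold meeting a leaf is open in it, which your maximality argument essentially gives) or invoke Kubarski's reduction, Theorem \ref{thm:2}. Likewise the second countability of the intrinsic leaf topology, which you flag but do not prove, needs the standard observation that a leaf meets each foliation chart in at most countably many plaques. Neither issue is a gap in substance, only in bookkeeping.
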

We employ the adjective ``global'' because in his original paper \cite{Frobenius1877}, Frobenius worked locally, and did not consider foliations; in modern terms (as opposed to the language of Pfaffian systems used at the time), Frobenius proved that a regular distribution $\Delta$ has integral submanifolds if and only if it is involutive. Furthermore, Frobenius was not - and did not claim to be - the first to prove the result that now bears his name. He was building from the work of Jacobi \cite{Jacobi1827}, who gave sufficient integrability conditions in a special case, and of Clebsch \cite{Clebsch1866}, who expanded Jacobi's conditions the general case. Frobenius also singled out the contributions of Deahna \cite{Deahna1840}, who gave necessary and sufficient conditions for integrability (c.f. Remark \ref{rem:4}). For more on the history of Frobenius's theorem, see the treatments by Hawkins \cite{Hawkins2005} and Samelson \cite{Samelson2001}.  

For singular foliations the situation remained unclear for some time. Hermann \cite{Hermann1962} was the first to give some sufficient conditions for integrability in terms of Lie subalgebras of vector fields (stated in Example \ref{ex:3}). Shortly after, Nagano \cite{Nagano1966} established Frobenius's theorem holds in the analytic case. Matsuda \cite{Matsuda1968}, and Lobry \cite{Lobry1970} worked to extended the sufficient conditions of Hermann, but like Hermann did not propose integrability conditions on the singular distributions themselves. Then, in 1973-1974, Stefan \cite{Stefan1974} and Sussmann \cite{Sussmann1973} independently published results that, taken together, successfully generalize the global Frobenius theorem as stated above; the integrability condition on $\Delta$ is that it is invariant under the flow of any vector field tangent to $\Delta$.  In Section \ref{sec:singular-foliations} of this paper, we proceed carefully toward Stefan and Sussmann's results, highlighting the independent contributions of both authors. For a broader perspective on the history of integrability theorems for singular distributions, see Lavau's survey \cite{Lavau2018}.

Diffeology was introduced in the 1980s by Souriau, and was therefore not available to Stefan and Sussmann as they wrote in the early 1970s. However, Stefan's work in particular uses many tools that arise naturally in diffeology, and he draws conclusions that are best interpreted diffeologically. We emphasize these connections to diffeology throughout Section \ref{sec:singular-foliations}, but highlight here Corollary \ref{cor:5} and Proposition \ref{prop:2}, which imply that it is equivalent to consider singular foliations, and orbits of D-connected diffeological groups acting smoothly on $M$.

In Section \ref{sec:transv-equiv-sing}, we contrast two approaches to the transverse geometry of a singular foliation: one equips the leaf space $M/\cl{F}$ with the quotient diffeology, and the other defines a notion of transverse equivalence directly. We call this Molino transverse equivalence, after Molino, whose notion of transverse equivalence ({\cite[Definition 2.1]{Molino1988}}) for regular foliations this generalizes. By Proposition \ref{prop:6}, a Molino transverse equivalence always induces a diffeomorphism between the leaf spaces, but the converse does not hold in general; in Example \ref{ex:9}, we give two regular foliations that are not Molino transverse equivalent, but which have diffeomorphic leaf spaces. It is thus a substantive question whether there is a class of singular foliations for which the diffeology of its leaf space determines its Molino transverse equivalence class. We give one affirmative answer in Proposition \ref{prop:9} in terms of quasifold groupoids, which were introduced in \cite{Karshon2022}. This specializes to orbifolds.

Our Molino transverse equivalence is intimately related to Garmendia and Zambon's Hausdorff Morita equivalence of singular foliations, defined in \cite{Garmendia2019}. We note that Garmendia and Zambon begin with a substantially different notion of singular foliation, which we call an Androulidakis-Skandalis singular foliation, c.f.\ Definition \ref{def:9}. They therefore have a different, generally finer, notion of equivalence. Our Molino transverse equivalence may be viewed as an extension of Garmendia and Zambon's Hausdorff Morita equivalence to Stefan singular foliations (Definition \ref{def:3}).

Finally, in Section \ref{sec:basic-cohom-sing}, we apply our Molino transverse equivalence to the basic cohomology associated of a singular foliation. This cohomology was first introduced by Reinhart \cite{Reinhart1959} in the regular case. Our main result of this section, Corollary \ref{cor:4}, shows that the basic cohomology is invariant under Molino transverse equivalence. This is consistent with the expectation that the basic cohomology depends only on the transverse geometry of the singular foliation. As a consequence, we see by Proposition \ref{prop:11} that if the quotient map $\pi:M \to M/\cl{F}$ induces an isomorphism $\pi^*:H^\bullet(M/\cl{F}) \to H_b^\bullet(M, \cl{F})$, where $H^\bullet(M/\cl{F})$ is the cohomology associated to the diffeological differential forms on $M/\cl{F}$, the same is true for all singular foliations in the Molino transverse equivalence class of $(M, \cl{F})$. Whether $\pi$ always induces an isomorphism remains an open question.

We will assume familiarity with diffeology, and not review it here. For an excellent introduction, see the book by Iglesias-Zemmour \cite{Iglesias-Zemmour2013}. For a history of the development of singular foliations, see the survey by Lavau \cite{Lavau2018}. If the reader is unfamiliar with Lie groupoids, we have provided a very brief overview in Appendix \ref{sec:review-lie-groupoids}, but refer to the book by Moerdijk and Mrcun \cite{Moerdijk2003}, and the survey by Lerman \cite{Lerman2010}, for more details.

We will reiterate the structure of this article. In Section \ref{sec:singular-foliations}, we give a careful description of the definition of a singular foliation, and an overview of the Stefan-Sussmann theorem. We pay special attention to diffeological considerations. In Section \ref{sec:transv-equiv-sing}, we introduce Molino transverse equivalence of singular foliations, and relate this to the diffeology of the leaf space, and Morita equivalence of Lie groupoids. Finally, in Section \ref{sec:basic-cohom-sing}, we show that the basic cohomology of a singular foliation is invariant under Molino transverse equivalence. Appendix \ref{sec:review-lie-groupoids} establishes the notation and language we need when dealing with Lie groupoids and algebroids.

\subsection{Acknowledgements}
\label{sec:acknowledgements}

I am grateful to my advisor, Yael Karshon, for introducing me to singular foliations, and for describing the idea of Example \ref{ex:9}. I also thank Jordan Watts and Jean-Pierre Magnot for their invitation to speak at the AMS-EMS-SMF Joint International Meeting in Grenoble, and Jean-Pierre Magnot for the invitation to submit this article to the present conference proceedings. Finally, I thank the anonymous reviewer and Alfonso Garmendia for several helpful comments on earlier drafts.

\section{The Basics of Singular Foliations}
\label{sec:singular-foliations}

A singular foliation is a partition of a manifold\footnote{by ``manifold,'' we mean set equipped with a maximal smooth atlas (smooth structure) whose induced topology is Hausdorff and second-countable. When required, we may denote the smooth structure by a letter such as $\sigma$, and refer to the manifold as $(M, \sigma)$.} $M$ into connected submanifolds, called \emph{leaves}, fitting together smoothly in an appropriate sense. Before we give a definition of singular foliation, however, we must establish what we mean by ``submanifold.'' First candidates are immersed submanifolds.
\begin{definition}
  \label{def:1}
A subset $L$ of $M$, together with a manifold structure $\sigma_L$, is an \define{immersed submanifold} of $M$ if the inclusion $\iota:(L, \sigma_L) \hookrightarrow M$ is an injective immersion. \eod
\end{definition}
\begin{remark}
  \label{rem:1}
  The smooth structure $\sigma_L$ is part of the data of the immersed submanifold $(L, \sigma_L)$. There may be other smooth structures $\sigma_L'$ on $L$ for which $(L, \sigma_L')$ is immersed, but is not diffeomorphic to $(L, \sigma_L)$. For example, a figure-eight may be immersed in $\R^2$ in two different ways. Note the topology of $(L, \sigma_L)$ is generally finer than the subspace topology.
  \eor
\end{remark}
Diffeology provides other candidates for submanifolds, namely diffeological and weakly-embedded submanifolds.
\begin{definition}
  \label{def:2}
  A subset $L$ of $M$ is a \define{diffeological submanifold} of $M$ if its subspace diffeology is a manifold diffeology. Diffeological submanifolds always carry the manifold structure given by their subspace diffeology. We call such $L$ a \define{weakly-embedded submanifold} if the inclusion $i: L \hookrightarrow M$ is an immersion.
  \eod
\end{definition}
\begin{remark}
  \label{rem:2}
  Weakly-embedded submanifolds have unique smooth structures for which the inclusion is an immersion; they are unambiguously immersed submanifolds. The topology of a diffeological submanifold is finer than the subspace topology. Due to a theorem of Joris \cite{Joris1982}, the cusp $x^2=y^3$ is a diffeological submanifold of $\R^2$ that is not weakly-embedded. For a detailed discussion, see \cite{Karshon2022b}.
  \eor
\end{remark}
Stefan required the leaves of a singular foliation to be weakly-embedded in his definition from \cite{Stefan1974}.
\begin{definition}
  \label{def:3}
  A \define{(Stefan) singular foliation} of a smooth manifold $M$ is a partition $\cl{F}$ of $M$ into connected, weakly-embedded submanifolds $L$, called \define{leaves} such that about each $x \in M$, there exists a chart $\psi$ of $M$ for which
  \begin{enumerate}[label = (\alph*)]
  \item $\psi$ is a diffeomorphism $V \to U \times W$, where $U$ and $W$ are open neighbourhoods of the origin in $\R^k$ and $\R^{n-k}$, respectively, and $k$ is the dimension of the leaf through $x$; 
  \item $\psi(x) = (0,0)$;
    \item If $L$ is any leaf of $\cl{F}$, then $\psi(L) = U \times \ell$, for some $\ell \subseteq W$.
    \end{enumerate}
    \eod
\end{definition}
\begin{remark}
  \label{rem:3}
  If every leaf has the same dimension, we call the foliation \emph{regular}. In this case, Stefan's definition coincides with the usual chart-based definition of a foliation.
  \eor
 \end{remark}
 \begin{remark}
   In the next section, we will consider the assignment to each $x \in M$ the subspace $T_xL\subseteq T_xM$. This is an example of a singular distribution, and the leaves $L$ are its maximal integral submanifolds (c.f. Definitions \ref{def:6}, \ref{def:7}, and \ref{def:8}).
   \eor
 \end{remark}
 An attractive feature of this definition is that it is intrinsic to the manifold $M$ and the partition into leaves $L$ (whose smooth structures are determined by the smooth structure on $M$, see Remark \ref{rem:2}). This is the first intrinsic definition of singular foliation to appear in the literature. Prior to Stefan, singular foliations were known only through examples appearing, for instance, in control theory.
  
\begin{example}
  \label{ex:1}
  Given a Lie group $G$ acting smoothly on a manifold $M$, the connected components of the $G$-orbits assemble into a singular foliation. More generally, the connected components of the orbits of a Lie groupoid $\cl{G} \rra M$ form a singular foliation of $M$. Yet more generally, every Lie algebroid $A \implies M$ induces a singular foliation on $M$. We treat these examples later in Example \ref{ex:3}. See Appendix \ref{sec:review-lie-groupoids} for basic information on Lie groupoids and algebroids.
  \eoe
\end{example}

The condition that the leaves of $\cl{F}$ are weakly-embedded submanifolds seems at first unnecessarily restrictive. \emph{A priori}, we might gain generality by only requiring that the leaves are connected immersed submanifolds of $M$. Kubarski \cite{Kubarski1990} in 1990 showed that in fact, we gain no generality from this weaker assumption.
\begin{theorem}[Kubarski]
  \label{thm:2}
  Suppose $M$ is a smooth manifold, and $\cl{F}$ is a partition of $M$ such that
  \begin{itemize}
  \item Each $L \in \cl{F}$ comes equipped with a smooth structure $\sigma_L$ for which $(L, \sigma_L)$ is a connected immersed submanifold of $M$, and
  \item About each $x \in M$, there is a chart $\psi$ of $M$ satisfying (a) through (c) in Definition \ref{def:3}, where the $k$ in condition (a) is the dimension of the leaf $(L, \sigma_L)$ about $x$.
  \end{itemize}
  Then each $L$ is a weakly-embedded submanifold of $M$, and consequently, $\cl{F}$ is a Stefan singular foliation.
\end{theorem}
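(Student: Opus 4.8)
The plan is to prove the only substantive assertion, that each leaf $L$ is weakly-embedded in the sense of Definition~\ref{def:2}; the conclusion that $\cl{F}$ is then a Stefan singular foliation is immediate from Definition~\ref{def:3}, since the chart condition is assumed. By that definition, $L$ is weakly-embedded exactly when its subspace diffeology is the manifold diffeology of $(L,\sigma_L)$ and the inclusion is an immersion. The latter holds by hypothesis, so the whole content is the \emph{initial-submanifold property}: every smooth map $f\colon O\to M$ from an open subset $O$ of a Euclidean space with $f(O)\subseteq L$ is smooth as a map into $(L,\sigma_L)$. This is precisely the statement that a plot of the subspace diffeology is a plot of $\sigma_L$, the reverse inclusion being automatic from smoothness of $\iota$.

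Everything rests on a single tangency lemma, which I expect to be the main obstacle. Fix an adapted chart $\psi\colon V\to U\times W$ centred at a point of $L$, so that $k=\dim L$ in condition~(a), write $L_V=\iota^{-1}(V)$ and let $\pr_W\colon U\times W\to W$ be the projection; by condition~(c) of Definition~\ref{def:3} we have $\psi(L\cap V)=U\times\ell$ with $\ell=\pr_W\psi(L\cap V)$. The lemma asserts that $g:=\pr_W\circ\psi\circ\iota\colon L_V\to W$ is locally constant, equivalently that $d\iota_z(T_zL)=\ker d(\pr_W\circ\psi)_{\iota(z)}$ is the tangent to the plaque through $\iota(z)$; that is, the immersed leaf is everywhere tangent to the plaques it meets. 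I would prove this by contradiction using Hausdorff dimension. If $g$ had nonzero differential at some $z$, then a curve in $(L,\sigma_L)$ whose $\iota$-image has velocity transverse to the plaque yields a smooth curve in $W$ with values in $\ell$ and nonzero speed, so $\ell$ contains a $C^1$ arc $J$. Then $U\times J$ is a $(k+1)$-dimensional $C^1$ submanifold contained in $\psi(L\cap V)$, whence $\psi^{-1}(U\times J)$ is a $(k+1)$-dimensional submanifold of $M$ lying inside $\iota(L_V)$. But $\iota(L_V)$ is the image of a $k$-manifold under the smooth, locally Lipschitz map $\iota$, so it has Hausdorff dimension at most $k$, while a $(k+1)$-dimensional submanifold has Hausdorff dimension $k+1$ --- a contradiction. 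The case $k=n$ is trivial, $L$ being open.

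With the tangency lemma in hand, two consequences follow quickly. First, since $L_V$ has at most countably many connected components ($L$ being second countable) and $g$ is locally constant, the set $\ell=g(L_V)$ is countable. Second, each plaque $P_w=\psi^{-1}(U\times\{w\})$, $w\in\ell$, satisfies $P_w\subseteq L\cap V$ and has open preimage $\iota^{-1}(P_w)$ in $L_V$, and $\iota$ restricts to a bijective local diffeomorphism $\iota^{-1}(P_w)\to P_w$ (an injective immersion between $k$-manifolds which, by tangency, lands in and covers the plaque); thus each plaque is an open subset of $(L,\sigma_L)$ carrying its embedded smooth structure. To finish, take $f\colon O\to M$ smooth with $f(O)\subseteq L$ and fix $t_0\in O$; on a connected neighbourhood $O'$ with $f(O')\subseteq V$, the set $\pr_W\psi f(O')$ is a connected subset of the countable set $\ell$, hence a single point $w_0$, since a connected countable subset of a Euclidean space is a singleton. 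Therefore $f(O')\subseteq P_{w_0}$, an embedded submanifold of $M$, so $f$ is smooth into $P_{w_0}$ and hence, by the previous step, into $(L,\sigma_L)$. As $t_0$ was arbitrary, $f$ is smooth into $(L,\sigma_L)$; this establishes the initial property, so the subspace diffeology of $L$ equals $\sigma_L$ and $L$ is weakly-embedded. Applying this to every leaf shows $\cl{F}$ is a Stefan singular foliation.
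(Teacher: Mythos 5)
Your proof is correct, but note that the paper itself offers no proof of this statement: it is attributed to Kubarski and cited to \cite{Kubarski1990}, so there is no in-text argument to compare against. Your write-up is a legitimate self-contained proof. The decomposition is the standard one for establishing the initial-submanifold property (tangency to plaques, countability of the plaque set $\ell$, then the connectedness argument forcing a plot to land in a single plaque), and the only genuinely delicate step is the tangency lemma, which you handle by a Hausdorff-dimension count: the image of the second-countable $k$-manifold $(L,\sigma_L)$ under the locally Lipschitz map $\iota$ has Hausdorff dimension at most $k$, so $\psi(L\cap V)=U\times\ell$ cannot contain a set of the form $U\times J$ with $J$ a $C^1$ arc. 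That argument is sound (second countability of $L$ is guaranteed by the paper's footnoted definition of manifold, and it is exactly what makes both the dimension bound and the countability of $\ell$ work); one could replace it by Sard's theorem or a Baire-category argument, but nothing is gained. The remaining steps --- that $\iota$ restricts to a diffeomorphism from the open-and-closed set $g^{-1}(w)$ onto the embedded plaque $P_w$, and that a connected countable subset of $W$ is a singleton --- are correctly deployed. The only cosmetic imprecision is the assertion $d\iota_z(T_zL)=\ker d(\pr_W\circ\psi)_{\iota(z)}$, where the lemma directly gives only the inclusion and equality follows from the dimension count $k=\dim L$; you use this implicitly and it does not affect the argument.
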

On the other hand, the notion of a diffeological submanifold provides another potential generalization to Stefan's definition. We show that again, we gain no generality under this weaker assumption.

\begin{proposition}
  \label{prop:1}
  Suppose $M$ is a smooth manifold, and $\cl{F}$ is a partition of $M$ into connected, diffeological submanifolds such that about each $x \in M$, there exists a chart $\psi$ of $M$ satisfying (a) through (c) in Definition \ref{def:3}. Then each $L \in \cl{F}$ is a weakly-embedded submanifold of $M$, and consequently, $\cl{F}$ is a Stefan singular foliation.
\end{proposition}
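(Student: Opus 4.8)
The plan is to exploit the foliated chart to reduce everything to a local model, and then to show that the manifold-diffeology hypothesis is precisely what upgrades the inclusion to an immersion. Since each leaf $L$ is already assumed to be a diffeological submanifold, by Definition \ref{def:2} it only remains to prove that the inclusion $i:L \hookrightarrow M$ is an immersion; this is the sole difference between a diffeological and a weakly-embedded submanifold, and once it is established, the conclusion that $\cl{F}$ is a Stefan singular foliation is immediate from Definition \ref{def:3}.

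First I would fix $x \in M$, let $L$ be the leaf through $x$ of dimension $k$, and choose a chart $\psi:V \to U \times W$ as in (a)--(c). Because $\psi$ is a diffeomorphism, it restricts to a diffeomorphism of diffeological spaces carrying the open subset $L \cap V$ onto $U \times \ell$ equipped with its subspace diffeology inherited from $U \times W$, where $\ell \subseteq W$ and, crucially, $\dim(U \times \ell) = k = \dim U$. The whole problem is now local: I must show that the inclusion $\iota: U \times \ell \hookrightarrow U \times W$ is an immersion.

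The key step is to produce, for each $w \in \ell$, the plaque inclusion $j_w:U \to U \times \ell$, $u \mapsto (u,w)$, and to observe that it is smooth: composing $j_w$ with any plot $p$ of $U$ yields $o \mapsto (p(o),w)$, which is a smooth map into $U \times W$ with image in $U \times \{w\} \subseteq U \times \ell$, hence a plot of the subspace diffeology. Since $\pr_1 \circ j_w = \mathrm{id}_U$, and the restriction $\pr_1|_{U \times \ell}:U \times \ell \to U$ is likewise smooth (being the restriction of the ambient projection), passing to tangent spaces gives $d(\pr_1)\circ d(j_w) = \mathrm{id}$. As $U$ and $U \times \ell$ are both $k$-dimensional, this forces $d(j_w)$ to be an isomorphism, so $j_w$ is a local diffeomorphism and each plaque $U \times \{w\}$ is open in $U \times \ell$. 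On each such plaque the inclusion $\iota$ is simply $u \mapsto (u,w)$, an embedding; since the plaques cover $U \times \ell$, the map $\iota$ is an immersion. Transporting back through $\psi$ and letting $x$ range over $M$ then shows that $i:L \hookrightarrow M$ is an immersion everywhere.

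The step I expect to be the main obstacle is the one that genuinely consumes the manifold-diffeology assumption: \emph{a priori} $\ell$ could be an arbitrary subset of $W$, and for a general partition admitting foliated charts there is no reason the inclusion should be an immersion. The dimension count $\dim(U \times \ell) = k = \dim U$, fed through the smooth local sections $j_w$, is exactly the leverage that rules out a ``thick'' or continuously-varying $\ell$ and pins the manifold structure to the disjoint union of plaques. I would take particular care to justify that $\psi$ restricts to a diffeomorphism of the subspace diffeologies (so that the model really has dimension $k$) and that $L \cap V$ is open in the leaf topology, since these two facts underpin the entire local reduction.
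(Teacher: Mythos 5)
Your argument is correct and is essentially the paper's proof transported through $\psi$ to the local model: in both cases the smooth plaque inclusion into the $k$-dimensional leaf (smooth precisely because the leaf carries the subspace diffeology, which is where the hypothesis is consumed) is forced by the dimension count to be a local diffeomorphism, yielding a chart of $L$ in which $\iota$ becomes the standard inclusion $U \times \{w\} \hookrightarrow U \times W$. The only cosmetic difference is that you certify immersivity of the plaque map via the retraction identity $\pr_1 \circ j_w = \operatorname{id}_U$, whereas the paper gets the same conclusion by applying the chain rule to the factorization of the ambient immersion $\psi^{-1}:U\times\{0\} \to M$ through $L \cap V$.
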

\begin{proof}
  It suffices to show the inclusion $\iota:L \hookrightarrow M$ is an immersion. Take $x \in L$, and fix a chart $\psi$ satisfying (a) through (c) of Definition \ref{def:3}. The intersection $L \cap V$ is an open subset of $L$.\footnote{recall the topology on $L$ is finer than the subspace topology.} Consider the diagram below (slightly abusing notation)
  \begin{equation*}
    \begin{tikzcd}
      L \cap V \ar[r, hook, "\iota"] & M \\
      U \times \{0\} \ar[u, "\psi^{-1}"] \ar[ur, "\psi^{-1}"'].
    \end{tikzcd}
  \end{equation*}
  Because the manifold structure on $L$ is given by the subset diffeology inherited from $M$, all maps in this diagram are smooth. The map on the right, $\psi^{-1}:U \times \{0\} \to M$, is an immersion, and by the chain rule, so is the map on the left, $\psi^{-1}:U \times \{0\} \to L \cap V$. Because both $U \times \{0\}$ and $L \cap V$ are manifolds of dimension $\dim L$, the map on the left, $\psi^{-1}:U \times \{0\} \to L \cap V$, is a diffeomorphism onto its image $L_0 := \psi^{-1}(U \times \{0\})$. Therefore it is a chart of $L$. In this chart, the inclusion satisfies
  \begin{equation*}
    \begin{tikzcd}
      L_0 \ar[r, hook, "\iota"] & V \ar[d, "\psi"] \\
      U \times \{0\} \ar[u, "\psi^{-1}"] \ar[r, hook] & U \times W.
    \end{tikzcd}
  \end{equation*}
  Therefore $\iota$ is locally an inclusion, hence is an immersion.
\end{proof}

\subsection{The Stefan-Sussmann Theorem}
Stefan's definition is ultimately difficult to verify, even when the proposed foliation should be regular. In the regular case, we instead identify foliations with their associated distributions, and apply the Frobenius theorem. Stefan \cite{Stefan1974} and Sussmann \cite{Sussmann1973} extended the Frobenius theorem simultaneously but independently in 1973-1974 to the singular setting, in what is now called the Stefan-Sussmann theorem. However, while similar, their contributions were not identical, and we will take care here to highlight their separate work.

Stefan worked with singular foliations generated by arrows. These are simply certain $1$-plots of $\Diff_{\text{loc}}(M)$, the space of diffeomorphisms between open subsets of $M$ equipped with its functional diffeology (for a description of this diffeology, see \cite{Iglesias-Zemmour2018}).
\begin{definition}
  \label{def:4}
  An \define{arrow} on a manifold $M$ is a $1$-plot $a:U \to \Diff_{\text{loc}}(M)$, such that $a(0) = \operatorname{id}$ and $\dom(a(t))\subseteq \dom(a(s))$, whenever $0 \leq |s| \leq |t|$. We allow for $a(t)$ to be the empty map.
  \eod
\end{definition}
Given a collection $\cl{A}$ of arrows, the union of their images, $\bigcup_{a \in \cl{A}} a(U)$ generates a pseudogroup, which we denote $\Psi\cl{A}$. We recall here the definition of pseudogroup.

\begin{definition}
  \label{def:5}
  A \define{pseudogroup} $P$ on a manifold $M$ is a set of diffeomorphisms between open subsets of $M$ that contains the identity, is closed under composition, inversion, and restriction to open subsets, and satisfies: if $f$ is a diffeomorphism between open subsets, and $f$ is locally in $P$, then $f$ is in $P$.
  \eod
\end{definition}

Also associated to $\cl{A}$ are two smooth singular distributions, by which we mean:
\begin{definition}
  \label{def:6}
  A \define{(smooth) singular distribution} on a manifold $M$ is a subset $\Delta \subseteq TM$ such that, for every $x \in M$, the set $\Delta_x := \{v \in \Delta \mid \pi(v) = x\}$ is a vector subspace of $T_xM$, and for every $v \in \Delta$, there is a locally defined vector field $X$ such that $X_{\pi(v)} = v$ and $X_y \in \Delta_y$ for all $y \in \dom X$.
  \eod
\end{definition}

Namely, set
\begin{align*}
  (\Delta\cl{A})_x &:= \left\{\frac{d}{dt}\Big|_{t=t_0} a(t)(y) \mid a(t)(y) = x\right\} \\
   \overline{(\Delta\cl{A})}_x &:= \{d\varphi_y v \mid \varphi \in \Psi\cl{A} \text{ and } v \in (\Delta\cl{A})_x\text{ and } \varphi(y) = x\}.
\end{align*}
Stefan {\cite[Theorem 1]{Stefan1974}} proved:
\begin{theorem}[Stefan]
  \label{thm:3}
  Given a collection of arrows $\cl{A}$ on $M$, the partition $\cl{F}$ of $M$ into the orbits of $\Psi\cl{A}$ is a singular foliation, and $T_xL = \overline{(\Delta\cl{A})}_x$ for each $x \in M$, where $L$ is the leaf about $x$.
\end{theorem}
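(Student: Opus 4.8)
The plan is to realize $\overline{(\Delta\cl{A})}$ as a $\Psi\cl{A}$-invariant singular distribution whose integral leaves are precisely the orbits of $\Psi\cl{A}$, and then to manufacture Stefan's foliated charts out of finite compositions of the arrows. The backbone is \emph{invariance}. Directly from the definition of $\overline{(\Delta\cl{A})}_x$ as the set of all $d\varphi_y v$ with $\varphi \in \Psi\cl{A}$, $\varphi(y)=x$, and $v \in (\Delta\cl{A})_y$, the chain rule gives for any $\psi \in \Psi\cl{A}$ defined at $x$ that $d\psi_x\bigl(d\varphi_y v\bigr)=d(\psi\circ\varphi)_y v$; since $\Psi\cl{A}$ is closed under composition and inversion (Definition \ref{def:5}), this yields $d\psi_x\,\overline{(\Delta\cl{A})}_x=\overline{(\Delta\cl{A})}_{\psi(x)}$. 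Because $\Psi\cl{A}$ acts transitively on each orbit, $d\psi_x$ is a linear isomorphism carrying $\overline{(\Delta\cl{A})}_x$ onto $\overline{(\Delta\cl{A})}_{\psi(x)}$, so $\dim\overline{(\Delta\cl{A})}_x$ is constant along each orbit; call this value $k$.

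Next I would dispatch the inclusion $\overline{(\Delta\cl{A})}_x\subseteq T_xL$, granting for the moment that $L$ carries a submanifold structure: each generator $\frac{d}{dt}\big|_{t_0}a(t)(y)$ is the velocity of the curve $t\mapsto a(t)(y)$ lying in the orbit, and pushing forward by any $\varphi\in\Psi\cl{A}$ keeps the vector tangent to $L$ since $\varphi$ maps orbits to orbits. To build the manifold structure itself, I would select one-parameter families $b_1(t),\dots,b_k(t)$ in $\Psi\cl{A}$, each a conjugate $b_i(t)=\varphi_i\circ a_i(t)\circ\varphi_i^{-1}$ of an arrow (so $b_i(0)=\operatorname{id}$, and $\dot b_i(0)(x)$ is the pushforward $d\varphi_i\,\dot a_i(0)(\varphi_i^{-1}x)\in\overline{(\Delta\cl{A})}_x$), chosen so that the $\dot b_i(0)(x)$ span $\overline{(\Delta\cl{A})}_x$, and then set
\[
  \Phi(t_1,\dots,t_k)=b_k(t_k)\circ\cdots\circ b_1(t_1)(x).
\]
Then $\Phi(0)=x$, $\Phi$ is smooth in the functional diffeology, its image lies in $L$, and $d\Phi_0$ is injective with image $\overline{(\Delta\cl{A})}_x$, so $\Phi$ is an immersion near the origin.

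The substantive step is to upgrade $\Phi$ from an immersion \emph{into} $L$ to a local parametrization \emph{of} $L$, and simultaneously to obtain the reverse inclusion $T_xL\subseteq\overline{(\Delta\cl{A})}_x$. Here invariance and the constant rank $k$ do the work: differentiating $\Phi$ in each argument produces, after transport by the inverse flows, a vector in $\overline{(\Delta\cl{A})}$, so the image of $\Phi$ is an integral manifold of $\overline{(\Delta\cl{A})}$ of dimension exactly $k$; and no further arrow can carry a point of this image off it, since its velocity is again forced into the rank-$k$ distribution. This shows $\Phi$ covers a neighbourhood of $x$ in $L$ and that $T_xL=\overline{(\Delta\cl{A})}_x$. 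Completing $\Phi$ by $n-k$ transverse coordinates produces a chart $\psi:V\to U\times W$ of $M$ with $\psi(x)=(0,0)$ in which the plaque through $x$ is $U\times\{0\}$; verifying condition (c) of Definition \ref{def:3} — that every leaf meets $V$ in a union of slices $U\times\ell$ — follows by applying the same invariance and constant-rank analysis to points near $x$. Each orbit is connected because every element of $\Psi\cl{A}$ is a composite of arrow values and each arrow supplies a path $s\mapsto a(s)(y)$ inside the orbit. Equipping each orbit with the smooth structure generated by these $\Phi$-parametrizations makes it a connected immersed submanifold for which charts (a)--(c) hold, and I would then invoke Kubarski's Theorem \ref{thm:2} (or Proposition \ref{prop:1}) to conclude each $L$ is weakly-embedded and $\cl{F}$ is a Stefan singular foliation.

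I expect the main obstacle to be exactly the constant-rank argument that makes $\Phi$ a genuine local diffeomorphism onto an \emph{open} subset of $L$, rather than merely an immersion into it: one must rule out the reachable set of the arrow-flows leaving the $k$-dimensional integral manifold of $\overline{(\Delta\cl{A})}$, and must check that two overlapping $\Phi$-parametrizations transition smoothly so that the atlas is well defined. This is the integrability heart of the Stefan--Sussmann theorem, and it relies on invariance in an essential way; it is also precisely the point at which the passage from the unsaturated $(\Delta\cl{A})_x$ to the saturated $\overline{(\Delta\cl{A})}_x$ is forced, since only the latter is $\Psi\cl{A}$-invariant and therefore integrable.
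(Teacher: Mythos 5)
The paper does not actually prove Theorem \ref{thm:3}; it cites Stefan's original article, so there is no in-text argument to compare against. Judged on its own, your outline follows the standard Stefan--Sussmann strategy (invariance of the saturated distribution, constant rank along orbits, reachability maps $\Phi$ as local parametrizations, then Kubarski/Proposition \ref{prop:1} to upgrade to weakly-embedded leaves), and the invariance computation $d\psi_x\,\overline{(\Delta\cl{A})}_x=\overline{(\Delta\cl{A})}_{\psi(x)}$ is correct. But the proposal stops short of the theorem's actual content: the step you defer as ``the main obstacle'' --- that $\Phi$ parametrizes a neighbourhood of $x$ \emph{in the orbit}, equivalently that $T_xL\subseteq\overline{(\Delta\cl{A})}_x$ --- is the whole theorem, and the justification you offer for it (``no further arrow can carry a point of this image off it, since its velocity is again forced into the rank-$k$ distribution'') is not valid as stated. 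A curve whose velocity lies pointwise in a singular distribution need not stay inside an integral manifold of that distribution; the paper's own Remark \ref{rem:5} (the Euler vector field on $\R^2$) is exactly a counterexample to that mode of reasoning. So as written there is a genuine gap at the decisive point.

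What actually closes the gap is a constant-rank argument applied not to individual tangent curves but to the finitely-parametrized composite reachability maps: for any further word $b_{k+1}(s_1)\circ\cdots\circ b_{k+m}(s_m)$ appended to $\Phi$, every partial derivative of the extended map $(t,s)\mapsto b_{k+m}(s_m)\circ\cdots\circ b_1(t_1)(x)$ lies in $\overline{(\Delta\cl{A})}$, whose fibre dimension is constant (equal to $k$) along the orbit by your invariance step; hence the extended map has rank at most $k$, and at least $k$ because it restricts to $\Phi$ on $s=0$. The constant rank theorem then forces its image to coincide locally with the image of $\Phi$, which simultaneously shows that the plaques are open in the orbit, that overlapping parametrizations are smoothly compatible, and that $T_xL=\overline{(\Delta\cl{A})}_x$. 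Note that this is precisely where saturation is used: the unsaturated $(\Delta\cl{A})$ gives no upper bound on the rank of the extended maps. Two smaller points: the definitions give $(\Delta\cl{A})_x$ and $\overline{(\Delta\cl{A})}_x$ as sets of vectors, so you must verify they are linear subspaces (or pass to spans) before speaking of their dimension; and to span $\overline{(\Delta\cl{A})}_x$ you need velocities of arrows at arbitrary times $t_0$, so your generating families should be of the form $b(s)=\varphi\circ a(t_0+s)\circ a(t_0)^{-1}\circ\varphi^{-1}$ rather than plain conjugates $\varphi\circ a(s)\circ\varphi^{-1}$.
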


\begin{example}
  \label{ex:2}
  Consider two vector fields on $\R^2$,
  \begin{equation*}
    X := \pdd{}{x} \quad Y:= \varphi(x)\pdd{}{y},
  \end{equation*}
  where $\varphi$ is a smooth, bounded, non-negative, increasing function which vanishes if and only if $x \leq 0$, and $\varphi(x) = 1$ if $x \geq 1$. Let $\cl{A}$ consist of the flows of $X$ and $Y$. Then $\Psi\cl{A}$ acts transitively on $\R^2$: it clearly acts transitively on $x > 0$; if $x \leq 0$, first flow along $X$ until we enter the region $x > 0$, flow along $\pm Y$ until we arrive at the required $y$-coordinate, and then flow along $-X$.  It follows that $\overline{(\Delta \cl{A})}_{(x,y)} = \R^2$ at all $(x,y)$, and the only orbit of $\Psi \cl{A}$ is $L = \R^2$.
  \eoe
\end{example}

In the setting of Stefan's Theorem \ref{thm:3}, the fact that for each $x \in M$, we have $\overline{(\Delta\cl{A})}_x = T_xL$ (for the leaf $L$ about $x$) means that the singular distribution $\overline{\Delta\cl{A}}$ is integrable, in the sense of the next definition below.

\begin{definition}
  \label{def:7}
  A singular distribution $\Delta$ is \define{integrable} if, at every $x \in M$, it admits an \define{integral submanifold}, which is an immersed submanifold $(L, \sigma_L)$ containing $x$ such that $T_y(L, \sigma_L) = \Delta_y$ for all $y \in L$.
  \eod
\end{definition}
Suppose now that $D$ is a collection of locally defined vector fields on $M$ inducing a singular distribution $\Delta$, and $\cl{A}_D$ is the collection of plots of the form $t \mapsto \Phi^X(t,\cdot)$, where $\Phi^X$ is the flow of an element of $D$. We have $\Delta\cl{A}_D = \Delta$. Stefan's theorem implies
\begin{corollary}\label{cor:1}
  The orbits of $\Psi\cl{A}_D$ are integral submanifolds of $\Delta$ if and only if $\Delta = \overline{\Delta}$.
\end{corollary}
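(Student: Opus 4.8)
The plan is to read off both implications directly from Stefan's Theorem~\ref{thm:3}, which identifies the tangent spaces of the orbits with the saturated distribution $\overline{\Delta}$. Since $D$ induces $\Delta$ and $\cl{A}_D$ consists of the flows of elements of $D$, we are given that $\Delta\cl{A}_D = \Delta$, and hence $\overline{(\Delta\cl{A}_D)} = \overline{\Delta}$. Applying Theorem~\ref{thm:3} to the collection of arrows $\cl{A}_D$, the orbits of $\Psi\cl{A}_D$ partition $M$ into the leaves $L$ of a singular foliation, and for each $x \in M$ the leaf $L$ through $x$ satisfies $T_xL = \overline{(\Delta\cl{A}_D)}_x = \overline{\Delta}_x$. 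This single identity is the engine of the whole argument.

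With it in hand, the corollary becomes a matter of unwinding Definition~\ref{def:7}. By that definition, the orbit $L$ through a point $x$ is an integral submanifold of $\Delta$ precisely when $T_yL = \Delta_y$ for every $y \in L$; since $T_yL = \overline{\Delta}_y$, this says $\overline{\Delta}_y = \Delta_y$ at each $y \in L$. I would then quantify over all points of $M$: because the orbits partition $M$, every $x \in M$ lies in exactly one orbit, so asserting that every orbit is an integral submanifold of $\Delta$ is equivalent to asserting $\overline{\Delta}_x = \Delta_x$ at every $x \in M$, which is exactly $\Delta = \overline{\Delta}$. This handles the forward implication, and I would obtain the reverse by running the same equivalence backwards: assuming $\Delta = \overline{\Delta}$, Theorem~\ref{thm:3} gives $T_yL = \overline{\Delta}_y = \Delta_y$ for each leaf $L$ and each $y \in L$, so each leaf satisfies the tangency condition of Definition~\ref{def:7} and is therefore an integral submanifold of $\Delta$.

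I expect no serious obstacle, as the substance is carried entirely by Theorem~\ref{thm:3}. The one point demanding care is the quantifier structure: integrability is a pointwise condition that must hold at \emph{every} point of each leaf, not merely at a chosen base point, so I would take pains to phrase the equivalence as an equality of distributions over all of $M$ rather than a condition verified at isolated base points. It is also worth recording, though not strictly needed, that the inclusion $\Delta \subseteq \overline{\Delta}$ always holds: taking $\varphi = \operatorname{id}$ in the definition of $\overline{\Delta}_x$ shows $\Delta_x \subseteq \overline{\Delta}_x$ for every $x$. Consequently the substantive half of the equality $\Delta = \overline{\Delta}$ is the reverse inclusion $\overline{\Delta} \subseteq \Delta$, i.e.\ that the distribution is already closed under pushforward by $\Psi\cl{A}_D$.
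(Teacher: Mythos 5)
Your proposal is correct and follows exactly the route the paper intends: the paper offers no separate proof, simply asserting that the corollary is an immediate consequence of Stefan's Theorem~\ref{thm:3} via the identity $T_xL = \overline{(\Delta\cl{A}_D)}_x = \overline{\Delta}_x$, which is precisely the identity you isolate and then unwind against Definition~\ref{def:7}. Your extra remarks on the quantifier structure and on the automatic inclusion $\Delta \subseteq \overline{\Delta}$ are accurate and harmless.
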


This corollary characterizes all integral singular distributions $\Delta$ whose integral submanifolds are the orbits of $\Psi \cl{A}_D$, for some (hence any) collection of locally defined vector fields $D$ spanning $\Delta$. However, assuming only that $\Delta$ is integrable, it is not obvious from the above statements that $\Delta = \overline{\Delta}$. We do not know \emph{a priori} that the integral submanifolds of $\Delta$ are the orbits of $\Psi \cl{A}_D$. Sussmann {\cite[Theorem 4.2]{Sussmann1973}} showed that nevertheless, for an integrable singular distribution $\Delta$, we do in fact have $\Delta = \overline{\Delta}$. He achieves this as a consequence of his Orbit Theorem {\cite[Theorem 4.1]{Sussmann1973}), which we state for completeness.
\begin{definition}
  \label{def:8}
  An integral submanifold $(L, \sigma_L)$ of $\Delta$ is \define{maximal} if it is connected, and every connected integral submanifold $(S, \sigma_S)$ of $\Delta$ which intersects $L$ is an open subset of $L$ (in its manifold topology).\footnote{Equivalently, $(L, \sigma_L)$ if, at each point in $L$, it is maximal if it is maximal with respect to the inclusion.}
  \eod
\end{definition}
\begin{theorem}[Sussmann's orbit theorem]
  \label{thm:4}
  Given a singular distribution $\Delta$ and set of locally-defined vector fields $D$ as above, the orbits of $\Psi\cl{A}_D$ are maximal integral submanifolds of $\overline{\Delta}$.
\end{theorem}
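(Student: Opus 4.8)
The plan is to deduce the orbit theorem almost entirely from Stefan's Theorem~\ref{thm:3}, reserving the genuine work for the maximality clause. Applying Theorem~\ref{thm:3} to the collection $\cl{A}_D$ --- and recalling $\Delta\cl{A}_D = \Delta$, so that $\overline{\Delta\cl{A}_D} = \overline{\Delta}$ --- tells us at once that the orbits of $\Psi\cl{A}_D$ partition $M$ into the leaves of a singular foliation $\cl{F}$, and that the leaf $L$ through any point $y$ satisfies $T_yL = \overline{\Delta}_y$. Each such $L$ is therefore a connected, weakly-embedded (in particular immersed) submanifold with $T_yL = \overline{\Delta}_y$ for every $y \in L$; by Definition~\ref{def:7} this is exactly the assertion that $L$ is a connected integral submanifold of $\overline{\Delta}$. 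Since Theorem~\ref{thm:3} also guarantees $L$ is connected, the only part of Definition~\ref{def:8} left to check is maximality: if $(S, \sigma_S)$ is any connected integral submanifold of $\overline{\Delta}$ meeting $L$, then $S \subseteq L$ and $S$ is open in $L$.

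The key observation is that $\overline{\Delta}$ has constant rank along $S$, and that this pins $S$ to leaves of a single dimension. Indeed, since $S$ is connected and $T_yS = \overline{\Delta}_y$ for all $y \in S$, the dimension $\dim \overline{\Delta}_y = \dim S$ is constant along $S$; writing $k := \dim S$ and using $\overline{\Delta}_y = T_yL_y$, where $L_y$ is the leaf through $y$, we conclude that \emph{every} point of $S$ lies on a leaf of dimension exactly $k$. Now fix $q \in S$ and a foliation chart $\psi \colon V \to U \times W$ about $q$ adapted to $\cl{F}$, so that $\dim U = \dim L_q = k$ and $\psi(q) = (0,0)$. Any leaf $L''$ of dimension $k$ meeting $V$ satisfies $\psi(L'' \cap V) = U \times \ell''$ with $\dim \ell'' = \dim L'' - \dim U = 0$; hence the plaques of the $k$-dimensional leaves are horizontal slices $U \times \{w\}$, and along them $\overline{\Delta}$ coincides with the horizontal distribution $\ker d(\pr_W \circ \psi)$. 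Since every point of $S \cap V$ lies on such a $k$-dimensional leaf, $\overline{\Delta}$ is horizontal at each point of $S \cap V$.

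Consequently, writing $\iota_S \colon S \hookrightarrow M$ for the inclusion, the smooth map $\pr_W \circ \psi \circ \iota_S$ has vanishing differential on the connected component $S_0$ of $\iota_S^{-1}(V)$ containing $q$, because $d\iota_S(T_yS) = \overline{\Delta}_y \subseteq \ker d(\pr_W \circ \psi)$ there; thus $\psi(\iota_S(S_0)) \subseteq U \times \{0\} = \psi(L_q \cap V)$. As $\iota_S$ restricted to $S_0$ is an immersion of a $k$-manifold into the $k$-manifold $U \times \{0\}$, it is a local diffeomorphism, hence open, so $\iota_S(S_0)$ is an open subset of the plaque $\psi^{-1}(U \times \{0\}) \subseteq L_q$. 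This shows that near each of its points $S$ lies inside, and is open in, the leaf through that point. From this local statement I would run a standard connectedness argument: the set $S \cap L$ is open in $S$ (by the above with $L_q = L$) and also closed in $S$ (if $q$ lies in the $S$-closure of $S \cap L$, the neighbourhood $S_0 \subseteq L_q$ meets $S \cap L$, forcing $L_q = L$ and hence $q \in L$); since $S$ is connected and $S \cap L \neq \emptyset$, we get $S = S \cap L \subseteq L$, and openness of $S$ in $L$ is precisely the local statement.

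The main obstacle is this maximality argument, and within it the step confining $S$ to leaves of the single dimension $k$: a priori an integral submanifold of $\overline{\Delta}$ could drift across leaves of varying dimension, and it is precisely the constancy of $\dim \overline{\Delta}$ along the connected $S$, combined with the horizontal form of $\overline{\Delta}$ along equidimensional plaques, that rules this out. Everything else reduces to the chart description of $\cl{F}$ supplied by Theorem~\ref{thm:3} together with routine immersion and dimension bookkeeping.
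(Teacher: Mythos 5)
The paper does not actually prove Theorem \ref{thm:4}: it states the orbit theorem ``for completeness'' and refers to Sussmann's original paper, where the result is established from scratch by building the orbit topology and manifold structure directly from compositions of flows, independently of Stefan's work. Your argument is therefore a genuinely different route: you take Stefan's Theorem \ref{thm:3} as a black box and extract from its foliation charts the one piece of content the orbit theorem adds, namely maximality. I believe your argument is correct, and it is a nice observation that maximality is a formal consequence of the chart statement: the constancy of $\dim\overline{\Delta}$ along a connected integral submanifold $S$ confines $S$ to points whose leaves have dimension $k=\dim S$, the plaques of $k$-dimensional leaves in a chart centred at a point of $S$ are the horizontal slices, and the vanishing of $d(\pr_W\circ\psi\circ\iota_S)$ then traps a neighbourhood of each point of $S$ inside a single plaque; the open-and-closed argument finishes. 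What this buys is brevity, at the cost of logical dependence on Theorem \ref{thm:3} (harmless here, since Stefan's proof does not use the orbit theorem, but it means your proof is not a substitute for Sussmann's contribution in the historical sense emphasized in Remark \ref{rem:4}). Two small points of rigour: the phrase ``$\dim\ell''=\dim L''-\dim U=0$'' is loose, since $\ell''$ is merely a subset of $W$; the correct justification is that each slice $U\times\{w\}$ with $w\in\ell''$ is a connected $k$-manifold smoothly included (via weak-embeddedness of $L''$) in the $k$-dimensional $L''$, hence an open plaque, which gives $T_yL''=\ker d(\pr_W\circ\psi)_y$ directly. You should also note explicitly that $\psi^{-1}(U\times\{0\})\subseteq L_q$ because $0\in\ell_q$, which you use when concluding $\iota_S(S_0)$ is open in $L_q$.
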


Combining Stefan and Sussmann's results yields what is today called the Stefan-Sussmann theorem.
\begin{theorem}[Stefan-Sussmann]
  \label{thm:5}
  A singular distribution $\Delta$ is integrable if and only if $\Delta = \overline{\Delta}$, in which case its maximal integral submanifolds are leaves of a singular foliation $\cl{F}$ such that $T_xL = \Delta_x$, where $L$ is the leaf through $x$.
\end{theorem}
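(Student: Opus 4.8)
The plan is to treat the two implications separately, using a fixed generating set of vector fields to connect $\Delta$ with the pseudogroup machinery already set up. Concretely, I would let $D$ be the collection of all locally defined vector fields $X$ on $M$ tangent to $\Delta$ (so that $X_y \in \Delta_y$ throughout the domain of $X$); the definition of a singular distribution guarantees that $D$ spans $\Delta$ pointwise, and by the discussion preceding Corollary \ref{cor:1} we then have $\Delta\cl{A}_D = \Delta$, hence $\overline{\Delta\cl{A}_D} = \overline{\Delta}$. I would also record once and for all the trivial inclusion $\Delta_x \subseteq \overline{\Delta}_x$, obtained by taking $\varphi = \operatorname{id}$ in the definition of $\overline{(\Delta\cl{A}_D)}_x$.

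For the ``if'' direction, assume $\Delta = \overline{\Delta}$. Applying Stefan's Theorem \ref{thm:3} to the family $\cl{A}_D$, the orbits of $\Psi\cl{A}_D$ are the leaves of a singular foliation $\cl{F}$ with $T_xL = \overline{(\Delta\cl{A}_D)}_x = \overline{\Delta}_x = \Delta_x$. By Sussmann's orbit theorem (Theorem \ref{thm:4}) these same orbits are maximal integral submanifolds of $\overline{\Delta} = \Delta$ (equivalently, by Corollary \ref{cor:1}, they are integral submanifolds precisely because $\Delta = \overline{\Delta}$). This shows $\Delta$ is integrable and at the same time identifies its maximal integral submanifolds with the leaves of $\cl{F}$, yielding the ``in which case'' clause. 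I would note that, since the orbits already partition $M$ into connected integral submanifolds, maximality forces any maximal integral submanifold to coincide with one of the orbits, so nothing is missed.

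The substance is the ``only if'' direction: integrability implies $\Delta = \overline{\Delta}$. Given the trivial inclusion above, it suffices to prove $\overline{\Delta}_x \subseteq \Delta_x$, and for this the key is the \emph{invariance} statement that every $\varphi \in \Psi\cl{A}_D$ satisfies $d\varphi_y(\Delta_y) = \Delta_{\varphi(y)}$. Because invariance is preserved under composition, inversion, restriction, and passage to local members, it is enough to verify it on the generators $\varphi = \Phi^X(t,\cdot)$ with $X \in D$. Fixing a point, I would take an integral submanifold $L$ of $\Delta$ through it; since $X$ is tangent to $\Delta = T_yL$ along $L$, the field $X$ restricts to a vector field on $L$, its flow preserves $L$, and therefore $d\Phi^X(t,\cdot)$ carries $T_yL = \Delta_y$ isomorphically onto $T_{\Phi^X(t,y)}L = \Delta_{\Phi^X(t,y)}$ for small $t$, extending to all admissible $t$ by covering the flow line with finitely many local integral submanifolds and composing. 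Feeding invariance back into the definition of $\overline{\Delta}$ then gives $\overline{\Delta}_x \subseteq \Delta_x$. This invariance is the heart of the direction and is essentially Sussmann's contribution; it can be packaged through the orbit theorem, which realises the orbit through $x$ as a maximal integral submanifold of $\overline{\Delta}$ whose tangent space at $x$ is exactly $\overline{\Delta}_x$.

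The main obstacle I anticipate is the technical point that $X|_L$ is a genuine smooth vector field on the integral submanifold $L$, and that its flow stays within $L$: this relies on the integral submanifold being immersed (indeed weakly-embedded, by Theorem \ref{thm:2}), so that the smooth $M$-valued map $X \circ \iota_L$ corestricts smoothly to $TL$, together with a connectedness and compactness argument to promote the local invariance along a single flow line to invariance for the full flow and hence for all of $\Psi\cl{A}_D$. Everything else is bookkeeping with the pseudogroup axioms and the two cited theorems.
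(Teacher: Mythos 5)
Your proposal is correct and follows the same route as the paper, which presents Theorem \ref{thm:5} as the assembly of Stefan's Theorem \ref{thm:3}, Corollary \ref{cor:1}, and Sussmann's orbit theorem \ref{thm:4}, with the ``integrable $\Rightarrow$ $\Delta = \overline{\Delta}$'' direction delegated to Sussmann's Theorem 4.2; your flow-invariance argument is exactly the content of that cited result, so you have filled in a step the paper leaves as a citation. One small quibble: an integral submanifold of a merely integrable $\Delta$ is only given as immersed, and Kubarski's Theorem \ref{thm:2} does not apply to it (it concerns partitions satisfying the chart conditions), but immersedness already suffices for restricting a tangent vector field, so nothing breaks.
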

\begin{remark}
  \label{rem:4}
  Stefan called the condition $\Delta\cl{A} = \overline{\Delta\cl{A}}$ \emph{homogeneity} of the set of arrows $\cl{A}$. Sussmann worked with a set $D$ of partially-defined vector fields spanning a singular distribution $\Delta$, and used the term \emph{$D$-invariance} for the condition $\Delta = \overline{\Delta}$. We will adopt Stefan's term ``homogeniety.'' This condition is implicit in earlier work by Lobry \cite{Lobry1970}, c.f. \cite[page 53]{Lavau2018}.

  When $\dim \Delta_x$ is constant over $M$, homogeneity of $\Delta$ is equivalent to involutivity, and we recover the global Frobenius theorem \ref{thm:1} for regular foliations. In fact, Deahna \cite{Deahna1840}, who provided necessary and sufficient conditions for integrability of a regular distribution before Frobenius, gave homogeniety of $\Delta$, and not involutivity, as the sufficient condition, c.f. \cite[page 525]{Samelson2001}.

  We already remarked that only Sussmann proved maximality of the integral submanifolds of an integrable $\Delta$. Only Stefan proved the orbits of $\Psi\cl{A}$ were weakly-embedded submanifolds, and not merely immersed, and only Stefan showed the orbits (i.e. the maximal integral submanifolds) assemble into a singular foliation.
  \eor
\end{remark}

\begin{example}
  \label{ex:3}
  Let $A \implies M$ be a Lie algebroid, with anchor $\rho:A \to TM$. The image of the anchor, $\rho(A)$, is an integrable singular distribution. In this case, a maximal integral submanifold of $\Delta$ through $x \in M$ coincides with the set of all points reachable from $x$ via $A$-paths, which are paths $\gamma$ in $M$ for which there exists a path $\tilde{\gamma}$ in $A$ such that $\rho \circ \tilde{\gamma} = \dot \gamma$.

  By passing to its associated Lie algebroid, one finds that the connected components of the orbits of a Lie groupoid $\cl{G} \rra M$, and the connected components of the orbits of a Lie group $G$ acting smoothly on $M$, assemble into a singular foliation.

  Historically, integrability of $\rho(A)$ follows from a result of Hermann \cite{Hermann1962} that predates Stefan and Sussmann's work. Hermann showed that any \emph{locally finitely generated Lie sub-algebra}\footnote{meaning a Lie subalgebra $D$ of $\mathfrak{X}(M)$ such that, for every open $U\subseteq M$, there exist $X^1,\ldots, X^k \in D$ such that $D|_U \subseteq C^\infty(U)\text{span}(X^1|_U, \ldots, X^k|_U)$.} of $\mathfrak{X}(M)$ induces an integrable singular distribution, and we can show that $\rho(A)$ satisfies this hypothesis. That the integral submanifolds are maximal, and assemble into a singular foliation, still requires Stefan's contribution to the Stefan-Sussmann theorem.
  \eoe
\end{example}

As a corollary, we can completely describe singular foliations by their associated singular distributions. This generalizes the Frobenius theorem as stated in the Introduction.
\begin{corollary}\label{cor:2}
  Given a singular foliation $\cl{F}$, the collection $T\cl{F} := \bigcup_{x \in M} T_xL$ is an integrable singular distribution. Given an integrable singular distribution $\Delta$, the partition of $M$ into maximal integral submanifolds is a singular foliation, denoted $\cl{F}_\Delta$. The assignments
  \begin{equation*}
    \cl{F} \mapsto T\cl{F}, \quad \Delta \mapsto \cl{F}_\Delta
  \end{equation*}
  are inverses of each other.
\end{corollary}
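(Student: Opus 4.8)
The plan is to verify that each assignment lands in its claimed target and then that the two composites are the identity, invoking the Stefan--Sussmann theorem (Theorem \ref{thm:5}) wherever possible.

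First I would show $T\cl{F}$ is an integrable singular distribution. Each fibre $(T\cl{F})_x = T_xL$ (with $L$ the leaf through $x$) is manifestly a subspace of $T_xM$, so the real content is the local-extension condition of Definition \ref{def:6} together with integrability. Both come from a foliation chart $\psi\colon V \to U \times W$ as in Definition \ref{def:3}: given $v \in T_xL$, write $v = \sum_{i=1}^{k} c^i\, \partial/\partial u^i|_x$ and set $X = \sum_i c^i\, \partial/\partial u^i$ on $V$. Condition (c) forces every leaf meeting $V$ to contain full slices $U \times \{w\}$, to which each $\partial/\partial u^i$ is tangent, so $X$ is everywhere tangent to the leaf through its foot point and provides the required extension. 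Integrability is then immediate, since the leaf $L$ through $x$ is an immersed submanifold with $T_yL = (T\cl{F})_y$ for all $y \in L$, hence an integral submanifold in the sense of Definition \ref{def:7}.

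That $\cl{F}_\Delta$ is a singular foliation is exactly the conclusion of Theorem \ref{thm:5}, which I would cite directly. The same theorem records $T_xL = \Delta_x$ for the leaves $L$ of $\cl{F}_\Delta$, giving at once $T\cl{F}_\Delta = \bigcup_x \Delta_x = \Delta$, one of the two composite identities. The remaining composite, $\cl{F}_{T\cl{F}} = \cl{F}$, amounts to showing that the maximal integral submanifolds of $T\cl{F}$ are precisely the leaves of $\cl{F}$. Its core is the classical plaque argument: any connected integral submanifold $S$ of $T\cl{F}$ lies in a single leaf and is open in it. Since $S$ is a connected manifold it has constant dimension $k$, so $T_zS = (T\cl{F})_z$ forces every leaf met by $S$ to have dimension $k$; in a chart adapted to such a leaf the transverse coordinates $w^j$ satisfy $dw^j|_{TS} = 0$, confining $S$ locally to a single plaque, which lies in one leaf. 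Fixing $y_0 \in S$ in a leaf $L_0$, the set $S \cap L_0$ is then open in $S$, and a chart adapted to the leaf through a boundary point, together with the fact that leaves partition $M$, shows it is also closed; by connectedness $S \subseteq L_0$ and $S$ is open in $L_0$. Granting this, each leaf $L$ is maximal, and conversely a maximal integral submanifold $S \subseteq L_0$ must equal $L_0$, since $L_0$ is itself a connected integral submanifold meeting $S$ and maximality forces $L_0$ open in $S$, hence $L_0 \subseteq S$.

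I expect this plaque-and-clopen argument to be the main obstacle. The delicacy is entirely point-set topological: one must work with the intrinsic (finer-than-subspace) topologies of $S$ and the leaves, verify that the transverse coordinates really are locally constant along $S$ by passing to the connected component of the relevant point within $\iota_S^{-1}(V)$, and in the closedness step carefully select a chart adapted to the leaf through a limit point and use the partition property to identify that leaf with $L_0$. The two well-definedness claims and the composite $T\cl{F}_\Delta = \Delta$ are, by contrast, routine once Theorem \ref{thm:5} is in hand.
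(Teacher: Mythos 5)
Your proof is correct. Note that the paper itself offers no proof of Corollary \ref{cor:2}: it is presented as an immediate consequence of the Stefan--Sussmann Theorem \ref{thm:5}, so there is nothing to compare line by line. What you have done is supply the verifications that the paper leaves implicit, and you have correctly identified where the actual work lies. The two easy claims --- that $\cl{F}_\Delta$ is a singular foliation with $T\cl{F}_\Delta = \Delta$ --- do follow by citing Theorem \ref{thm:5} verbatim, exactly as you say. The claim that $T\cl{F}$ is a \emph{smooth} singular distribution genuinely requires the foliation chart of Definition \ref{def:3}, and your constant-coefficient extension $X = \sum c^i\,\partial/\partial u^i$ is the right construction; the only point worth making explicit is that tangency of $\partial/\partial u^i$ to the leaf through each foot point (and not merely to the plaque $U \times \{w\}$) uses that the leaf is weakly embedded, so the inclusion of the plaque factors smoothly through the leaf and $T_y(\text{plaque}) \subseteq T_yL'$. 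Finally, the identity $\cl{F}_{T\cl{F}} = \cl{F}$ is the one place where Theorem \ref{thm:5} alone does not suffice (it produces \emph{some} foliation tangent to $T\cl{F}$, not obviously $\cl{F}$ itself), and your plaque-and-clopen argument --- including the care with the intrinsic topology of $S$, passing to connected components of $\iota_S^{-1}(V)$, and invariance of domain to get openness in the plaque --- is a complete and standard way to close that gap. In short, your write-up is a valid proof and is strictly more detailed than what the paper records.
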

\begin{remark}
  \label{rem:5}
  We make two comparisons to the regular case. First, homogeneity (see Remark \ref{rem:4}) implies involutivity, but for non-regular singular distributions, homogeneity is stronger than involutivity. Indeed, the singular distribution spanned by $X$ and $Y$ in Example \ref{ex:2} is involutive, but it is not integrable. For regular distributions, involutivity and homogeneity are equivalent.

  Second, the maximal integral submanifolds of an integrable regular distribution may also be described as equivalence classes of the relation: $x \sim y$ if there is a  path $\gamma$ from $x$ to $y$ tangent to $\Delta$, i.e.\ $\dot{\gamma}(t) \in \Delta_{\gamma(t)}$ for all $t$. In general, this is not the case for non-regular singular distributions. For example, consider the singular distribution spanned by the Euler vector field $X := x\pdd{}{x} + y \pdd{}{y}$ on $\R^2$. This has maximal integral submanifolds the origin $\{(0,0)\}$, and the rays from the origin. However, every two points can be joined by a path tangent to $\Delta$.
  \eor
\end{remark}

\subsection{Stefan's results and diffeology}
\label{sec:stef-results-diff}

While Stefan's Theorem \ref{thm:3} is the most cited part of his paper \cite{Stefan1974}, in the same work he also proved a result about orbits of subgroups of $\Diff(M)$, which has applications to diffeology. In this section, the $D$-\emph{topology}, and in particular $D$-\emph{connectedness} of a diffeological space $X$ is central.\footnote{Here the ``D'' stands for ``Diffeology.'' In this context, it does not represent some family of vector fields.} The $D$-topology on $X$ is the finest topology for which all the plots are continuous, and its $D$-connected components coincide with its (smooth) path-connected components. See Chapters 2 and 5 of \cite{Iglesias-Zemmour2013} for details.

Stefan proved {\cite[Theorem 3]{Stefan1974}}:
\begin{corollary}
  \label{cor:5}
  Let $G$ be a D-connected diffeological subgroup of $\Diff(M)$. Then the $G$-orbits assemble into a singular foliation of $M$.
\end{corollary}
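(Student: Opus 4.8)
The plan is to realize the $G$-orbits as the orbits of a pseudogroup $\Psi\cl{A}$ generated by a suitable collection of arrows $\cl{A}$, and then to invoke Stefan's Theorem \ref{thm:3}, which guarantees precisely that such orbits assemble into a singular foliation.

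First I would produce the arrows. The crucial input is D-connectedness: since the $D$-connected components of $G$ coincide with its smooth path-connected components, any $g \in G$ can be joined to the identity by a smooth path, i.e.\ a $1$-plot $p : \R \to G$ with $p(0) = \operatorname{id}$ and (after reparametrizing) $p(1) = g$. Composing with the inclusion $G \hookrightarrow \Diff(M) \hookrightarrow \Diff_{\text{loc}}(M)$ yields a $1$-plot of $\Diff_{\text{loc}}(M)$; since every $p(t)$ is a global diffeomorphism, its domain is all of $M$, so the nesting condition $\dom(p(t)) \subseteq \dom(p(s))$ holds trivially and $p$ is an arrow in the sense of Definition \ref{def:4}. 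I take $\cl{A}$ to be the collection of all arrows obtained this way, equivalently all arrows $a : U \to \Diff_{\text{loc}}(M)$ with $a(0) = \operatorname{id}$ and $a(U) \subseteq G$.

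Next I would check that the orbits of $\Psi\cl{A}$ coincide with the $G$-orbits. For one inclusion, each generator lies in $G$, and because $G$ is a group of global diffeomorphisms, every element of $\Psi\cl{A}$ is (by induction over composition, inversion, restriction, and the locality axiom of Definition \ref{def:5}) a map that agrees locally with elements of $G$; hence it carries each point into its own $G$-orbit, so the $\Psi\cl{A}$-orbit of any $x$ is contained in its $G$-orbit. For the reverse inclusion, given $g \in G$ the arrow constructed above satisfies $g = a(1) \in a(U) \subseteq \Psi\cl{A}$, so $g(x)$ and $x$ lie in the same $\Psi\cl{A}$-orbit; as $g$ and $x$ were arbitrary, each $G$-orbit is contained in a single $\Psi\cl{A}$-orbit. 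The two partitions therefore agree, and Theorem \ref{thm:3} lets me conclude that the $G$-orbits are the leaves of a singular foliation.

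The main obstacle I anticipate is the smoothness of the inclusion $\Diff(M) \hookrightarrow \Diff_{\text{loc}}(M)$ for the functional diffeologies, which is what legitimizes treating a smooth path in $G$ as a genuine $1$-plot of $\Diff_{\text{loc}}(M)$; this should follow by unwinding the definition of the functional diffeology (the evaluation maps are smooth, and the domains are constantly equal to $M$), but it deserves an explicit word. A secondary point is the careful verification that the pseudogroup operations — restriction, composition, inversion, and the locality closure — never move a point outside its $G$-orbit, which is routine once one records that each operation preserves the property of agreeing locally with elements of $G$.
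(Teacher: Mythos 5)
Your proposal is correct and follows essentially the same route as the paper: the paper's proof also takes $\cl{A}$ to be the collection of plots $a:\R \to G$ with $a(0)=\operatorname{id}$, uses that D-connectedness implies (smooth) path-connectedness to conclude $\bigcup_a a(\R) = G$, identifies $\Psi\cl{A}$ with the pseudogroup generated by $G$, and invokes Theorem \ref{thm:3}. Your additional verifications (that the nesting condition on domains is trivial for global diffeomorphisms, and that the pseudogroup operations preserve $G$-orbits) are details the paper leaves implicit, but they are correct and do not change the argument.
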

\begin{proof}
  Let $\cl{A}$ consist of all plots (paths) $a:\R \to G$ with $a(0) = \operatorname{id}$. Then $\cl{A}$ is a set of arrows. Because $G$ is D-connected, it is path-connected ({\cite[Article 5.7]{Iglesias-Zemmour2013}}). Thus $\bigcup_{a \in \cl{A}} a(\R) = G$, and $\Psi\cl{A}$ is the pseudogroup generated by $G$, so the result follows from Theorem \ref{thm:3}.
\end{proof}
Conversely, we have:
\begin{proposition}\label{prop:2}
  Every singular foliation of $M$ is a partition of $M$ into orbits of some $D$-connected subgroup of $\Diff(M)$. 
\end{proposition}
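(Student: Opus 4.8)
The plan is to realize the leaves as the orbits of the subgroup $G \subseteq \Diff(M)$ generated by the time-one flows of compactly supported vector fields tangent to $\cl{F}$. Write $T\cl{F}$ for the integrable singular distribution attached to $\cl{F}$ by Corollary \ref{cor:2}, and call a vector field $X$ on $M$ \emph{tangent to $\cl{F}$} if $X_y \in (T\cl{F})_y$ for every $y$. Since a compactly supported vector field is complete, each such $X$ has a globally defined flow $\Phi^X_t \in \Diff(M)$, and I set $G := \langle \Phi^X_1 : X \text{ compactly supported and tangent to } \cl{F}\rangle$. The work then divides into three tasks: that $G$ is $D$-connected, that every $G$-orbit lies in a single leaf, and that $G$ acts transitively on each leaf; together the last two give that the orbits are exactly the leaves.

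$D$-connectedness is the easiest. For a single generator the flow gives a smooth path $t \mapsto \Phi^X_t = \Phi^{tX}_1$ in $\Diff(M)$ from $\operatorname{id}$ to $\Phi^X_1$ that lies entirely in $G$; reparametrising so that it sits still near its endpoints and concatenating such paths (pre-composed with the appropriate partial products) produces a smooth path in $G$ from $\operatorname{id}$ to any finite product of generators. Hence $G$ is smoothly path-connected, which by \cite[Article 5.7]{Iglesias-Zemmour2013} coincides with $D$-connected.

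Containment of orbits in leaves rests on the fact that the integral curve of a tangent vector field stays within one leaf. If $X$ is tangent to $\cl{F}$ then $X$ restricts to a vector field on each leaf $L$ (a weakly-embedded, hence immersed, submanifold with $T_pL = (T\cl{F})_p$), so $t \mapsto \Phi^X_t(y)$ agrees with the integral curve of $X|_L$ through $y$ and never leaves $L$. Consequently each generator, and hence every element of $G$, preserves each leaf, so every $G$-orbit is contained in a leaf.

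The substantive step, and the main obstacle, is transitivity on a fixed leaf $L$. Given $x,y \in L$, I join them by a path in the connected manifold $L$, cover its compact image by finitely many foliation charts as in Definition \ref{def:3}, and refine via a Lebesgue number to obtain points $x = p_0, p_1, \ldots, p_N = y$ with consecutive (and arbitrarily close) points lying in a common chart $\psi : V \to U \times W$. Because $\dim L$ equals the fibre dimension $k$ of the chart, $\psi(L \cap V) = U \times \ell$ with $\ell \subseteq W$ discrete, so a connected segment of the path lies in a single plaque $\psi^{-1}(U \times \{w\})$; thus $p_{j-1}$ and $p_j$ share the same $W$-coordinate. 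For each $j$ I then build a compactly supported vector field $Y_j$ that in the chart equals $\chi \cdot Z$, where $Z$ is the constant field in the $U$-directions pointing from $p_{j-1}$ to $p_j$ and $\chi$ is a bump function equal to $1$ on a convex neighbourhood of the connecting segment and supported in $V$; extending by zero makes $Y_j$ a compactly supported vector field tangent to $\cl{F}$ whose time-one flow sends $p_{j-1}$ to $p_j$. The composite $\Phi^{Y_N}_1 \circ \cdots \circ \Phi^{Y_1}_1 \in G$ then carries $x$ to $y$. The delicate points are the discreteness of $\ell$ (which confines each path segment to a single plaque, and is exactly where $\dim L = k$ is used) and the verification that the bumped fields remain tangent to $\cl{F}$ throughout $M$ — both local computations in the foliation chart.
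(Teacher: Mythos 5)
Your route is genuinely different from the paper's. The paper gets the whole statement in a few lines: it chooses compactly supported vector fields spanning $T\cl{F}$ at each point, notes that their flows form a set of arrows $\cl{A}$ whose images generate a D-connected subgroup of $\Diff(M)$, and then quotes the Stefan--Sussmann Theorem \ref{thm:5} to identify the orbits of $\Psi\cl{A}$ with the leaves -- both inclusions (orbits contained in leaves, and transitivity on each leaf) are absorbed into that citation. You instead prove the two inclusions by hand, which is more self-contained but is exactly where a problem appears.

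The genuine error is in your transitivity step: the set $\ell \subseteq W$ with $\psi(L \cap V) = U \times \ell$ is \emph{not} discrete in general, even for the leaf through the centre of the chart. For the irrational-slope foliation of the torus (Example \ref{ex:5}) every leaf is dense, all leaves have dimension $k=1$, and $\ell$ is a countable dense subset of $W$. So discreteness cannot be ``exactly where $\dim L = k$ is used.'' What actually saves the step is this: each plaque $\psi^{-1}(U \times \{w\})$, $w \in \ell$, includes into $L$ by an injective immersion between manifolds of the same dimension $k$ (this is where $\dim L = k$ enters), hence is open in the manifold topology of $L$; the plaques therefore partition $L \cap V$ into open connected pieces and are its connected components in the leaf topology, so a path segment that is connected \emph{in the leaf topology} and lies in $L \cap V$ stays in a single plaque. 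For this you must also run the covering and Lebesgue-number argument in the manifold topology of $L$, covering the compact path image by the sets $L \cap V_\alpha$, which are open in $L$ but generally not in the subspace topology. Two smaller points: in your containment step, the claim that $\Phi^X_t(y)$ ``never leaves $L$'' needs a word, since the maximal integral curve of $X|_L$ could a priori be defined on a shorter interval than that of $X$; one rules this out by observing that near any boundary time the flow line lies in the leaf through the limit point, which must then coincide with $L$. Also, smoothness of $X|_L$ as a vector field on $L$ uses that $L$ is weakly embedded. With these repairs your argument goes through.
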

\begin{proof}
  Fix a singular foliation $\cl{F}$. For each $x \in M$, take a collection $\{X^x\}$ of compactly supported vector fields whose values at $x$ span $T_x\cl{F}$. This is possible because $T\cl{F}$ is smooth (Definition \ref{def:6}, Corollary \ref{cor:2}), and $M$ supports bump functions. Let $\cl{A}$ denote the collection of all plots of the form $t \mapsto \Phi^{X^x}(t,\cdot)$. Then $\bigcup_{a \in \cl{A}}a(\R)$ generates some D-connected subgroup of $\Diff(M)$, and its orbits coincide with the orbits of $\Psi\cl{A}$, which are precisely the leaves of $\cl{F}$ by the Stefan-Sussmann Theorem \ref{thm:5}.
\end{proof}
\begin{remark}
  \label{rem:6}
  In the proof above, we required a collection of vector fields which spanned $T\cl{F}$. We were not concerned with how many vector fields were necessary. In 2008 and 2012, respectively, Sussmann \cite{Sussmann2008} and Drager, Lee, Park, and Richardson \cite{Drager2012} independently showed that only finitely many are required. Precisely, they showed that given a singular distribution $\Delta$, there exist finitely many vector fields $X^1,\ldots, X^n$ such that $\Delta_x = \text{span}\{X^1_x,\ldots, X^n_x\}$ for every $x \in M$. Drager et al. provided some bounds for $n$.  Note, however, that the collection $X^1, \ldots, X^n$ does not necessarily generate the sheaf of sections of $\Delta$. In fact, Drager et. al. \cite[Section 5]{Drager2012} give an example of a singular distribution over $\R$ whose sheaf of sections is not finitely generated.
  \eor
\end{remark}
As an immediate of Proposition \ref{prop:2}, we have:
\begin{corollary}
  For a singular foliation $\cl{F}$, the group of diffeomorphisms of $M$ which fix the leaves of $\cl{F}$ acts transitively on each leaf.
\end{corollary}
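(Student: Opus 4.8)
The plan is to deduce this directly from Proposition \ref{prop:2}, which does essentially all the work. That proposition furnishes a $D$-connected subgroup $G \leq \Diff(M)$ whose orbits are exactly the leaves of $\cl{F}$. The one observation needed is that $G$ is contained in the group $H$ of leaf-preserving diffeomorphisms. This is a tautology about group actions: each orbit is invariant under every element of the acting group. Concretely, if $L = G\cdot x$ is the orbit (hence the leaf) through $x$, then $g(L) = L$ for every $g \in G$, so every element of $G$ fixes each leaf setwise. Hence $G \leq H$.

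With $G \leq H$ in hand, transitivity is immediate. By Proposition \ref{prop:2} the leaves of $\cl{F}$ are precisely the $G$-orbits, so $G$ already acts transitively on each leaf: given $x$ and $y$ in a common leaf $L$, the point $y$ lies in the orbit $G\cdot x = L$, so there is some $g \in G$ with $g(x) = y$. Since $g$ also lies in the larger group $H$, the group $H$ acts transitively on $L$ as well. As $L$ was arbitrary, this proves the claim.

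I do not expect any genuine obstacle here; the substance is entirely contained in Proposition \ref{prop:2}, and what remains is the combination of two elementary facts — that a group acts transitively on its own orbits, and that those orbits are group-invariant. Should one prefer to avoid the abstract orbit-invariance argument, one could instead unwind the construction in Proposition \ref{prop:2} and observe that $G$ is generated by flows of vector fields tangent to $T\cl{F}$, which map each leaf to itself; but the argument via orbit-invariance is cleaner and does not require re-examining the generators.
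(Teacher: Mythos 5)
Your argument is correct and is exactly the one the paper intends: the corollary is stated as an immediate consequence of Proposition \ref{prop:2}, and your two observations (that the group $G$ furnished there preserves its own orbits, hence lies in the leaf-fixing group, and that $G$ already acts transitively on each orbit) are precisely the implicit steps. No issues.
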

It follows that, for a manifold $M$, it is equivalent to consider singular foliations, to consider integrable distributions, and to consider orbits of D-connected subgroups of $\Diff(M)$. Here is one consequence of this fact.

\begin{proposition}
  \label{prop:3}
  The partition given by a singular foliation $\cl{F}$ of $M$ satisfies the frontier condition: if $S$ and $L$ are leaves, and $S \cap \overline{L} \neq \emptyset$, then $S \subseteq \overline{L}$ (here the closure is taken relative to $M$).
\end{proposition}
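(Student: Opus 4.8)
The plan is to reduce the frontier condition, which is \emph{a priori} a purely topological statement about the partition, to the $G$-invariance of topological closures, using the group-theoretic description of the leaves. The key input is Proposition \ref{prop:2}: every singular foliation of $M$ is the partition of $M$ into orbits of some $D$-connected subgroup $G \leq \Diff(M)$. The essential observation is that each $g \in G$, being a diffeomorphism of $M$, is in particular a homeomorphism of $M$, and therefore commutes with closure: $g(\overline{A}) = \overline{g(A)}$ for every subset $A \subseteq M$.

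First I would invoke Proposition \ref{prop:2} to fix a subgroup $G \leq \Diff(M)$ whose orbits are exactly the leaves of $\cl{F}$. Consequently, for any leaf $L$ and any point $x \in L$, we have $L = G \cdot x$, and likewise $S = G \cdot z$ for any $z \in S$.

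Next I would show that $\overline{L}$ is $G$-invariant. Since $L$ is a single $G$-orbit, $g(L) = L$ for all $g \in G$; applying the fact that $g$ is a homeomorphism, $g(\overline{L}) = \overline{g(L)} = \overline{L}$. Thus every element of $G$ preserves $\overline{L}$. Finally, taking a point $z \in S \cap \overline{L}$, the leaf through $z$ is $S = G \cdot z$; since $z$ lies in the $G$-invariant set $\overline{L}$, the entire orbit $G \cdot z = S$ is contained in $\overline{L}$, giving $S \subseteq \overline{L}$.

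The main obstacle is merely conceptual rather than technical: one must recognize that the frontier condition follows formally once the leaves are realized as orbits of a group acting by homeomorphisms of $M$. All the substantive work has already been absorbed into Proposition \ref{prop:2} (and ultimately the Stefan--Sussmann machinery behind it); after that, the argument is purely formal, requiring only that closures are preserved by homeomorphisms and that orbits partition $M$.
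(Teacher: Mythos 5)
Your proof is correct and follows essentially the same route as the paper: both reduce the frontier condition to the $G$-invariance of $\overline{L}$ for a group $G$ whose orbits are the leaves (Proposition \ref{prop:2}), the only cosmetic difference being that you use $g(\overline{L}) = \overline{g(L)}$ directly, while the paper verifies the same invariance with a convergent-sequence argument.
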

\begin{proof}
  Fix a D-connected subgroup $G$ of $\Diff(M)$ whose orbits are the leaves of $\cl{F}$. It suffices to show $\overline{L}$ is $G$-invariant. Suppose $x \in \overline{L}$, and let $(x_n)$ be a sequence of points in $L$ which converge (in $M$) to $x$. Then for any $g \in G$, we have $g(x_n) \to g(x)$, and all the $g(x_n)$ are in $L$, hence $g(x) \in \overline{L}$. Since $g$ was arbitrary, this completes the proof. 
\end{proof}

\subsection{Androulidakis-Skandalis singular foliations}
\label{sec:andr-skand-sing}
We view singular foliations as partitions of a manifold into leaves, and have seen that by the Stefan-Sussmann theorem, this is equivalent to considering integrable singular distributions. Some authors take a different approach, and view singular foliations as choices of submodules of vector fields. We use the definition given by Androulidakis and Skandalis \cite{Androulidakis2009}.
\begin{definition}
  \label{def:9}
  An \emph{Androulidakis-Skandalis singular foliation} is a $C^\infty(M)$-submodule $\mathscr{D}$ of $\fk{X}_c(M)$ that is locally finitely generated\footnote{this means that, about every $x \in M$, there is a neighbourhood $U$ and $Y^1,\ldots, Y^k \in \mathscr{D}|_U$ such that $\mathscr{D}|_U = C_c^\infty(U)\text{span}(Y^1,\ldots, Y^k)$, where $\mathscr{D}|_U$ is the $C^\infty(U)$-submodule of $\fk{X}_c(U)$ generated by all the $Y = fX|_U$, where $f \in C_c^\infty(U)$ and $X \in \mathscr{D}$.}  and involutive.
  \eod
\end{definition}

\begin{remark}
  \label{rem:7}
  As noted, for example, by Garmendia and Zambon \cite[Remark 1.8]{Garmendia2019}, and by Wang \cite[Remark 2.1.13]{Wang2017}, it is equivalent to define an Androulidakis-Skandalis singular foliation as an involutive, locally finitely generated subsheaf of the sheaf of vector fields on $M$. This approach could allow a definition of singular foliations on possibly non-Hausdorff manifolds, where the submodule and sheaf theoretic notions no longer coincide, c.f. Remark \ref{rem:8}.
  \eor
\end{remark}

The Stefan-Sussmann Theorem \ref{thm:5} implies that the singular distribution associated to an Androulidakis-Skandalis singular foliation $\mathscr{D}$ is integrable (or, one can apply an earlier result of Hermann, c.f. Example \ref{ex:3}). Therefore $\mathscr{D}$ induces a partition of a manifold $M$ into leaves, and this partition is a Stefan singular foliation. However, many different choices of $\mathscr{D}$ may induce the same singular foliation, as the example below shows.

\begin{example}
  \label{ex:4}
  Let $\mathscr{D}_k$ be generated by $x^k \pdd{}{x}$. Each $\mathscr{D}_k$ induces the Stefan singular foliation of $\R$ with three leaves: $x < 0$, $\{0\}$, and $x > 0$. However, no two of the modules $\mathscr{D}_k$ are isomorphic.
  \eoe
\end{example}

Every Lie algebroid, hence every Lie groupoid and action of a Lie group on a manifold, induces an Androulidakis-Skandalis singular foliation. Conversely, Androulidakis and Zambon {\cite[Proposition 1.3]{Androulidakis2013}} have described an Androulidakis-Skandalis singular foliation $\mathscr{D}_{\text{counter}}$ of $\R^2$ that is not induced by any Lie algebroid. We do not give the details here, but note that the underlying Stefan singular foliation is the partition of $\R^2$ into point leaves $\{(k,0)\}$ for natural $k$, and the complement $\R^2 \smallsetminus \bigcup_{k \geq 1} \{(k,0)\}$. This underlying Stefan singular foliation is induced by a smooth Lie group action on $\R^2$, namely that generated by vector fields $f(x,y)\pdd{}{x}$ and $f(x,y)\pdd{}{y}$, where $f$ is a bounded, non-negative function that vanishes precisely on the point leaves $\{(k,0)\}$. But this Lie group action does not induce the Androulidaks-Skandalis singular foliation $\mathscr{D}_{\text{counter}}$.

At time of writing, the following question is open:
\begin{question}
  Is every Stefan singular foliation induced by some Androulidakis-Skandalis singular foliation? 
\end{question}
As we see in Example \ref{ex:4} above, even if the answer is affirmative, the choice of Androulidakis-Skandalis singular foliation is not unique. It is also not natural, in general: consider the singular foliation $\cl{F}$ of $\R$ whose leaves are $\{x\}$ with $x \leq 0$, and $x > 0$. By {\cite[Proposition 5.3]{Drager2012}}, the space of all vector fields tangent to $\cl{F}$ is not locally finitely generated, and it is not clear how to choose a ``best'' vector field tangent to $\cl{F}$ to use as a generator of an Androulidakis-Skandalis singular foliation. 

\section{Transverse Equivalence of Singular Foliations}
\label{sec:transv-equiv-sing}

Given a singular foliation $\cl{F}$ of $M$, the leaf space $M/\cl{F}$ is naturally a diffeological space. For convenience, we recall the plots of $M/\cl{F}$.

\begin{definition}
  A map $p:U \to M/\cl{F}$, where $U$ is an open subset of Cartesian space, is a \define{plot} of the quotient diffeology of $M/\cl{F}$ if, about each $r \in U$, there is an open neighbourhood $V$ or $r$, and a smooth map $q:V \to M$, such that $p|_V = \pi \circ q$, where $\pi$ is the quotient map.
  \eod
\end{definition}
The D-topology coincides with the quotient topology \cite[Article 2.12]{Iglesias-Zemmour2013}. However, whereas the quotient topology on $M/\cl{F}$ may be trivial, or lose information about the singular foliation $\cl{F}$, its diffeology is often richer, as the following examples show.

\begin{example}
  \label{ex:5}
  Let $\pi:\R^2 \to T^2 := \R^2/\Z^2$ denote the quotient map for the $\Z^2$ action on $\R^2$. For irrational $\alpha$, let $S_\alpha$ be the line in $\R^2$ with slope $\alpha$. The conjugacy classes of the subgroup $\pi(S_\alpha)$ of $T^2$ assemble into a regular foliation of $T^2$. We call the leaf space $T_\alpha := T^2/\pi(S_\alpha)$ an \emph{irrational torus.}

  The quotient topology on $T_\alpha$ is always trivial, but the quotient diffeology is not: Donato and Iglesias-Zemmour \cite{Donato1985} proved that $T_\alpha \cong T_\beta$ if and only if $\beta = \frac{a+b\alpha}{c+d\alpha}$, where $a,b,c,d \in \mathbb{Z}$ and $ad-bc = \pm 1$.
  \eoe
\end{example}

\begin{example}
  \label{ex:6}
  Consider the action of the orthogonal group $O(n)$ on $\R^n$. The leaves of the induced singular foliation are the orbits of the action, namely the origin $\{0\}$ and the concentric spheres. Topologically, the quotient spaces $\R^n/O(n)$ are all homeomorphic to the half-line $[0,\infty)$ with its subspace topology. But by {\cite[Exercise 50,51]{Iglesias-Zemmour2013}} they are all diffeologically distinct, and none are diffeologically diffeomorphic to $[0,\infty)$ with its subset diffeology. For $n = 1$, we get a diffeological orbifold  $\R/\Z_2$.
  \eoe
\end{example}

We may, therefore, propose the diffeological space $M/\cl{F}$ as a model for the transverse geometry of $\cl{F}$. In the next sections, we compare this notion with another model.

\subsection{Molino Transverse Equivalence}
\label{sec:molino-transv-equiv}
In {\cite[Definition 2.1]{Molino1988}}, Molino defines a notion of transverse equivalence for regular foliations. In \cite{Garmendia2019}, Garmendia and Zambon extended this notion to Androulidakis-Skandalis singular foliations, and our definition below is similar to theirs and extends Molino's. However, due to the differences between Androulidakis-Skandalis and Stefan singular foliations outlined in Section \ref{sec:andr-skand-sing}, our definition does not coincide with Garmendia and Zambon's.

We need two technical Lemmas first.
\begin{lemma}
  \label{lem:1}
   Suppose $p:M \to N$ is a surjective submersion with connected fibers, and assume $N$ is connected. Then $M$ is connected.
\end{lemma}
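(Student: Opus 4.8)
The plan is to argue by contradiction, leveraging the fact that a submersion is an open map (it carries open sets to open sets, by the local normal form that makes $p$ locally a coordinate projection $\R^n \to \R^k$). Suppose $M$ were disconnected; then we could write $M = A \sqcup B$ with $A$ and $B$ nonempty, open, and disjoint. The pivotal observation is that, because each fiber $p^{-1}(y)$ is connected, it cannot meet both $A$ and $B$: otherwise $A \cap p^{-1}(y)$ and $B \cap p^{-1}(y)$ would constitute a separation of the connected set $p^{-1}(y)$. Hence every fiber lies entirely in $A$ or entirely in $B$.

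First I would use this to show $p(A)$ and $p(B)$ are disjoint: if some $y$ lay in both images, its fiber would meet both $A$ and $B$, which we have just ruled out. Since $p$ is surjective and each (nonempty) fiber lands wholly in one piece, $p(A)$ and $p(B)$ together cover $N$; they are nonempty because $A$ and $B$ are. Finally, openness of $p$ shows $p(A)$ and $p(B)$ are open. Thus $N = p(A) \sqcup p(B)$ exhibits $N$ as a disjoint union of two nonempty open sets, contradicting the connectedness of $N$. Therefore $M$ is connected.

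The only substantive input is that a submersion is an open map; I expect this to be the main point to invoke, and it is immediate from the local normal form for submersions. Everything else is elementary point-set topology describing how connected fibers interact with a hypothetical separation, and the surjectivity is used only to guarantee that the images of $A$ and $B$ exhaust $N$.

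As an alternative route, since $M$ and $N$ are manifolds, connectedness coincides with path-connectedness, so one could instead take $x_0, x_1 \in M$, join $p(x_0)$ to $p(x_1)$ by a path in $N$, attempt to lift it through the submersion, and use connectedness of the fibers to bridge the endpoints. However, submersions need not be fibrations, so path lifting demands extra care; I would therefore prefer the open-map argument above as the cleaner proof.
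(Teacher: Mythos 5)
Your proof is correct and is essentially the same argument as the paper's, just phrased with a hypothetical separation $M = A \sqcup B$ rather than with a continuous function $g:M \to \{0,1\}$ (the dictionary being $A = g^{-1}(0)$, $B = g^{-1}(1)$). Both proofs hinge on the same two points: connected fibers force the separation (equivalently, $g$) to be constant on fibers, and the submersion, being an open surjection, lets the resulting data descend to a separation of the connected space $N$.
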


\begin{proof}
  Take any continuous function $g:M \to \{0,1\}$.  Because the fibers of $p$ are connected, $g$ is constant on the fibers. Therefore, as $p$ is a submersion, there is a smooth function $h:N \to \{0,1\}$ such that $h \circ p = g$. Since $N$ is connected, $h(N)$ is a single point; since $p$ is surjective, $g(M) = h(N)$ is a single point. Therefore every continuous function $g:M \to \{0,1\}$ is constant, and we conclude that $M$ is connected.
\end{proof}

\begin{lemma}
  \label{lem:2}
    Suppose $p:M \to N$ is a submersion. For every locally defined vector field $Y$ on $N$, and every $v \in T_xM$ such that $dp_x(v) = Y_{p(x)}$, there is a locally defined vector field $X$ on $M$ such that $X_x = v$ and $X$ is $p$-related to $Y$.
  \end{lemma}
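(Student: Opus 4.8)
The plan is to prove this statement purely locally, using the fact that a submersion looks like a projection in suitable coordinates. The key structural insight is that the conclusion is local in nature: I only need to produce $X$ on some neighbourhood of $x$, and $p$-relatedness is also a local condition. So I would first invoke the submersion normal form to trivialize $p$, then lift $Y$ through the resulting product structure in the most naive possible way, and finally correct the lift at the single point $x$ to match the prescribed value $v$.

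First I would choose submersion charts about $x$ and $p(x)$. Since $p$ is a submersion, there are coordinates in which $p$ becomes the standard projection $\pr:\R^m = \R^{m-n} \times \R^n \to \R^n$, say with $x \mapsto 0$ and $p(x) \mapsto 0$. Working in these charts, write the given vector field as $Y = \sum_j Y^j(u)\,\partial/\partial u^j$ on a neighbourhood of $0 \in \R^n$. The most obvious $p$-related lift is the ``horizontal'' field $\widetilde{X} := \sum_j (Y^j \circ \pr)\,\partial/\partial u^j$, viewed as a field on $\R^{m-n}\times\R^n$ that is constant in the fiber directions and ignores the $\partial/\partial w^i$ directions entirely. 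By construction $d\pr(\widetilde{X}) = Y \circ \pr$, so $\widetilde{X}$ is $p$-related to $Y$.

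The remaining issue is that $\widetilde{X}_x$ need not equal the prescribed $v$; the two agree only after applying $d\pr$. So I would record the discrepancy $v - \widetilde{X}_x$. The condition $dp_x(v) = Y_{p(x)}$ guarantees that $d\pr(v - \widetilde{X}_x) = Y_{p(x)} - Y_{p(x)} = 0$, so this discrepancy is a \emph{vertical} vector, i.e.\ it lies in $\ker d\pr_x$ and thus is a combination of the fiber directions $\partial/\partial w^i$. I would then add a correction term: pick a bump function $\chi$ equal to $1$ near $0$ and extend the constant vertical vector $v - \widetilde{X}_x$ to a vector field $\chi\cdot(v-\widetilde{X}_x)$ supported near $x$. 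Because this correction is everywhere vertical, it is $p$-related to the zero field on $N$, so adding it preserves $p$-relatedness to $Y$; and at $x$ we have $\chi(x)=1$, so the corrected field $X := \widetilde{X} + \chi\cdot(v-\widetilde{X}_x)$ satisfies $X_x = v$. Transporting $X$ back through the charts gives the required field on $M$.

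I do not anticipate a serious obstacle here, since everything reduces to the submersion normal form and a bump-function correction. The only point demanding mild care is verifying that adding a vertical correction does not disturb $p$-relatedness: one must check that $dp$ annihilates the correction term at \emph{every} point, not merely at $x$, which holds because $\chi\cdot(v-\widetilde{X}_x)$ is vertical throughout its support by construction. A cleaner alternative, avoiding charts, would be to choose any right inverse (a local section of $dp$, guaranteed by the submersion condition) to split $T_xM = \ker dp_x \oplus H_x$, lift $Y$ via a smooth distribution of such splittings, and again correct by a vertical field; but the chart-based argument is the most transparent and is the one I would write up.
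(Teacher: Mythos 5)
Your proof is correct and takes essentially the same approach as the paper: both pass to the submersion normal form and lift $Y$ horizontally, filling in the vertical components with the constants determined by $v$. The paper simply writes $X_{x'} := (Y_{p(x')}, v^{n+1},\ldots, v^m)$ in one step, so your bump-function correction is harmless but unnecessary.
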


  \begin{proof}
    Because $p$ is a submersion, without loss of generality we may assume that $p:\R^m \to \R^n$ is the projection of the first $n$ coordinates. For $v = (v^1,\ldots, v^m) \in T_xM$, define
    \begin{equation*}
      X_{x'}:= (Y_{p(x')}, v^{n+1},\ldots, v^m).
    \end{equation*}
    Then $X$ is $p$-related to $Y$, because $dp_{x'}$ is the projection of the first $n$ coordinates, and the condition $dp_x(v) = Y_{p(x)}$ writes
    \begin{equation*}
      (v^1,\ldots, v^n) = Y_{p(x)},
    \end{equation*}
    so $X_x = v$.
  \end{proof}

  \begin{proposition}
    \label{prop:4}
    Let $p:M \to N$ be a surjective submersion with connected fibers. If $\Delta$ is an integrable singular distribution on $N$, then $(dp)^{-1}(\Delta)$ is an integrable singular distribution on $M$. If $L$ is a leaf of the singular foliation of $N$ induced by $\Delta$, then $p^{-1}(L)$ is a leaf of the singular foliation of $M$ induced by $(dp)^{-1}(\Delta)$.
  \end{proposition}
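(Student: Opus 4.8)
The plan is to write $\Delta' := (dp)^{-1}(\Delta)$, so that $\Delta'_x = (dp_x)^{-1}(\Delta_{p(x)}) \subseteq T_xM$, and to verify three things in turn: that $\Delta'$ is a smooth singular distribution, that it is integrable, and that the leaf of $\cl{F}_{\Delta'}$ through a point $x$ is exactly $p^{-1}(L)$, where $L$ is the leaf of $\cl{F}_\Delta$ through $p(x)$. First I would check $\Delta'$ satisfies Definition \ref{def:6}. Each $\Delta'_x$ is a subspace, being the preimage of the subspace $\Delta_{p(x)}$ under the linear map $dp_x$. For the spanning condition, given $v \in \Delta'_x$ I set $w := dp_x(v) \in \Delta_{p(x)}$; since $\Delta$ is smooth there is a locally defined vector field $Y$ on $N$ valued in $\Delta$ with $Y_{p(x)} = w$, and Lemma \ref{lem:2} produces a locally defined $p$-related lift $X$ with $X_x = v$. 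Then $p$-relatedness gives $dp_{x'}(X_{x'}) = Y_{p(x')} \in \Delta_{p(x')}$, so $X$ is valued in $\Delta'$, as required.

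Next I would exhibit integral submanifolds. Fix a leaf $L$ of $\cl{F}_\Delta$; it is weakly-embedded with $T_yL = \Delta_y$. Using the local normal form of the submersion $p$ (locally $p$ is the projection $\R^{m-n} \times N \to N$, with $m = \dim M$, $n = \dim N$), the subspace diffeology identifies $p^{-1}(L)$ locally with $\R^{m-n} \times L$, so $p^{-1}(L)$ is a weakly-embedded submanifold of $M$ and $T_x(p^{-1}(L)) = (dp_x)^{-1}(T_{p(x)}L) = (dp_x)^{-1}(\Delta_{p(x)}) = \Delta'_x$. Thus $p^{-1}(L)$ is an integral submanifold of $\Delta'$. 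Moreover the restriction $p : p^{-1}(L) \to L$ is a surjective submersion (again by the normal form) whose fibers over $y \in L$ are the fibers $p^{-1}(y)$ of $p$, which are connected; since $L$ is connected, Lemma \ref{lem:1} shows $p^{-1}(L)$ is connected. As this works through every point of $M$, the distribution $\Delta'$ is integrable, and Theorem \ref{thm:5} provides the singular foliation $\cl{F}_{\Delta'}$ whose leaves are the maximal integral submanifolds of $\Delta'$.

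Finally I would identify $p^{-1}(L)$ with a leaf. Let $L'$ be the leaf of $\cl{F}_{\Delta'}$ through a chosen $x \in p^{-1}(L)$. Since $p^{-1}(L)$ is a connected integral submanifold meeting $L'$, maximality (Definition \ref{def:8}) forces $p^{-1}(L)$ to be an open subset of $L'$, so $p^{-1}(L) \subseteq L'$. For the reverse inclusion I run the same construction for every leaf $L_\alpha$ of $\cl{F}_\Delta$: each $p^{-1}(L_\alpha)$ is a connected integral submanifold of $\Delta'$, hence, whenever it meets $L'$, is an open subset of $L'$. As $\alpha$ varies these sets are pairwise disjoint and cover $L'$, so connectedness of $L'$ leaves exactly one nonempty piece, namely $p^{-1}(L)$; therefore $L' = p^{-1}(L)$, which is the claim.

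The step I expect to be the main obstacle is the construction in the second paragraph: rigorously establishing that $p^{-1}(L)$ is a weakly-embedded submanifold with tangent space exactly $\Delta'$. Care is needed because $L$ is only weakly-embedded, not embedded, so I must combine the submersion normal form with the fact that the subspace diffeology of a product $\R^{m-n} \times L \subseteq \R^{m-n} \times N$ is the product of the subspace diffeologies. Once that is in place, the tangent-space identification and the connectedness via Lemma \ref{lem:1} follow, and the maximality bookkeeping of the last paragraph is routine given that all the $p^{-1}(L_\alpha)$ are known to be connected integral submanifolds.
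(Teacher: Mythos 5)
Your proposal is correct and follows essentially the same route as the paper: smoothness of $(dp)^{-1}(\Delta)$ via Lemma \ref{lem:2}, the $p^{-1}(L)$ as connected (Lemma \ref{lem:1}) weakly-embedded integral submanifolds, and the Stefan--Sussmann theorem to organize the maximal integral submanifolds into a foliation. The only difference is cosmetic: where the paper invokes Corollary \ref{cor:2} to identify the $p^{-1}(L)$ with the leaves, you carry out the maximality/connectedness bookkeeping explicitly, which is exactly what that citation is standing in for.
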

  \begin{proof}
    To see that $(dp)^{-1}(\Delta)$ is a smooth singular distribution, take $v \in ((dp)^{-1}(\Delta))_x$. Then $dp_x(v) \in \Delta_{p(x)}$, and since $\Delta$ is smooth, there is some partially defined vector field $Y$ of $N$ tangent to $\Delta$ with $dp_x(v) = Y_{p(x)}$. By Lemma \ref{lem:2}, we get a partially defined vector field $X$ of $M$ such that $X_x = v$, and $dp(X) = Y$. This vector field is tangent to $(dp)^{-1}(\Delta)$, and passes through $v$. Hence $(dp)^{-1}(\Delta)$ is smooth.

    To see that $(dp)^{-1}(\Delta)$ is integrable, let $x \in M$ and let $L$ be the maximal integral submanifold of $\Delta$ through $p(x)$. Because $p$ is a surjective submersion, $p^{-1}(L)$ is a weakly-embedded submanifold of $M$, and for each $x' \in p^{-1}(L)$,
    \begin{equation*}
      T_{x'}(p^{-1}(L)) = (dp_{x'})^{-1}(T_{p(x')}L) = ((dp)^{-1}(\Delta))_{x'}.
    \end{equation*}
    By the Stefan-Sussmann Theorem \ref{thm:5}, the maximal integral submanifolds of $(dp)^{-1}(\Delta)$ assemble into a singular foliation of $M$. But the $p^{-1}(L)$ already partition $M$ into connected (by Lemma \ref{lem:1}) integral submanifolds of $(dp)^{-1}(\Delta)$. Corollary \ref{cor:2} lets us conclude that the $p^{-1}(L)$ are the maximal integral submanifolds of $(dp)^{-1}(\Delta)$.
  \end{proof}
In light of Proposition \ref{prop:4}, we can use the following notation.
  \begin{definition}
    \label{def:10}
    Given a surjective submersion with connected fibers $p:M \to N$, and an integrable singular distribution $\Delta$ on $N$ with associated singular foliation $\cl{F}$, let $p^{-1}\Delta := (dp)^{-1}(\Delta)$, and let $p^{-1}\cl{F}$ denote the singular foliation associated to $p^{-1}\Delta$; its leaves are the sets $p^{-1}(L)$, where the $L$ are leaves of $\cl{F}$. We call $p^{-1}\Delta$ and $p^{-1}\cl{F}$ the \define{pullbacks} of $\Delta$ and $\cl{F}$.
    \eod
  \end{definition}

  Using the pullback foliation, we now define Molino transverse equivalence of singular foliations.

  \begin{definition}
    \label{def:11}
    Two singular foliations, $(N_0,\cl{F}_0)$ and $(N_1,\cl{F}_1)$, are \define{Molino transverse equivalent} if there exists a singular foliation $(M, \cl{F})$ and surjective submersions with connected fibers $p_i:M \to N_i$ such that $p_i^{-1}(\cl{F}_i) = \cl{F}$. We will write $\cl{F}_0 \cong \cl{F}_1$.
    \eod
  \end{definition}

Naturally, we must show:
  \begin{proposition}
    \label{prop:5}
  Molino transverse equivalence is an equivalence relation on singular foliations.    
\end{proposition}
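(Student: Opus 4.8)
I need to verify reflexivity, symmetry, and transitivity of Molino transverse equivalence.

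Reflexivity: For any $(N, \cl{F})$, I need to find a common pullback. The obvious choice is $M = N$ with $p_0 = p_1 = \mathrm{id}_N$. The identity is a surjective submersion with connected (singleton) fibers, and $\mathrm{id}^{-1}(\cl{F}) = \cl{F}$ trivially. So $\cl{F} \cong \cl{F}$.

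Symmetry: This is immediate from the definition, since the roles of $p_0$ and $p_1$ are symmetric. If $(M, \cl{F})$ with $p_i : M \to N_i$ witnesses $\cl{F}_0 \cong \cl{F}_1$, then the same data with the indices swapped witnesses $\cl{F}_1 \cong \cl{F}_0$.

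Transitivity: This is the substantive case and where I expect the main work. Suppose $\cl{F}_0 \cong \cl{F}_1$ via $(M, \cl{F})$ with $p_i : M \to N_i$, and $\cl{F}_1 \cong \cl{F}_2$ via $(M', \cl{F}')$ with $q_1 : M' \to N_1$ and $q_2 : M' \to N_2$. The natural candidate for a common pullback is the fiber product $P := M \fiber{p_1}{q_1} M' = \{(m, m') : p_1(m) = q_1(m')\}$. The plan is to show $P$ is a manifold, that the projections $\pi_M : P \to M$ and $\pi_{M'} : P \to M'$ are surjective submersions with connected fibers, and that the composites $r_0 := p_0 \circ \pi_M : P \to N_0$ and $r_2 := q_2 \circ \pi_{M'} : P \to N_2$ are surjective submersions with connected fibers whose pullbacks of $\cl{F}_0$ and $\cl{F}_2$ agree (and equal an appropriate foliation $\cl{F}_P$ on $P$).

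The key steps, in order, are as follows. First, since $p_1$ and $q_1$ are submersions, at least one being a submersion guarantees the fiber product $P$ is a smooth (transverse) submanifold of $M \times M'$, and the projections $\pi_M, \pi_{M'}$ are surjective submersions; their fibers are $\pi_M^{-1}(m) \cong q_1^{-1}(p_1(m))$ and symmetrically, which are connected because $q_1$ and $p_1$ have connected fibers. Second, I compose: $r_0 = p_0 \circ \pi_M$ and $r_2 = q_2 \circ \pi_{M'}$ are surjective submersions (composites of such) with connected fibers (here I would invoke Lemma \ref{lem:1}, since the fibers of a composite of submersions with connected fibers fiber over connected base with connected fibers, hence are connected). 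Third, and this is the crux, I must check $r_0^{-1}(\cl{F}_0) = r_2^{-1}(\cl{F}_2)$ as foliations on $P$. Using Proposition \ref{prop:4} and the pullback notation of Definition \ref{def:10}, I would argue $\pi_M^{-1}(p_0^{-1}\cl{F}_0) = \pi_M^{-1}(\cl{F}) $ and $\pi_{M'}^{-1}(q_2^{-1}\cl{F}_2) = \pi_{M'}^{-1}(\cl{F}')$; but $\cl{F} = p_1^{-1}\cl{F}_1$ and $\cl{F}' = q_1^{-1}\cl{F}_1$, so both pull back on $P$ to the foliation $(p_1 \circ \pi_M)^{-1}\cl{F}_1 = (q_1 \circ \pi_{M'})^{-1}\cl{F}_1$, these two maps agreeing by the defining equation of the fiber product. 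This common foliation is $\cl{F}_P$, witnessing $\cl{F}_0 \cong \cl{F}_2$.

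The main obstacle is the third step: verifying that the leafwise pullbacks genuinely coincide, which requires the compatibility $p_1 \circ \pi_M = q_1 \circ \pi_{M'}$ (built into $P$) together with functoriality of the pullback operation $(-)^{-1}$ under composition of submersions. I would first record the small lemma that $(g \circ f)^{-1}\cl{F} = f^{-1}(g^{-1}\cl{F})$ for composable surjective submersions with connected fibers — which follows directly from the chain rule identity $(d(g\circ f))^{-1} = (df)^{-1}\circ (dg)^{-1}$ applied to $\Delta = T\cl{F}$ and Definition \ref{def:10} — and then the leaf-level statement $p^{-1}(L)$ from Proposition \ref{prop:4} makes the equality of leaves transparent. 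Establishing that the fiber product is a manifold and that the relevant maps are surjective submersions with connected fibers is routine given Lemma \ref{lem:1}, so I expect no difficulty there.
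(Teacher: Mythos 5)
Your proposal is correct and follows essentially the same route as the paper: identity for reflexivity, swapping roles for symmetry, and for transitivity the fiber product over $N_1$, with connectedness of the composite fibers obtained from Lemma \ref{lem:1} and the equality of pullback foliations from the compatibility $p_1 \circ \pi_M = q_1 \circ \pi_{M'}$ together with functoriality of $(-)^{-1}$ under composition. The paper likewise invokes this functoriality as ``the usual property of pullbacks,'' so no further comment is needed.
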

\begin{proof}
  Reflexivity is witnessed by the identity. Symmetry is clear because we can reverse the roles of the $N_i$. For transitivity, build the following diagram:
  \begin{equation*}
      \begin{tikzcd}
      & & M \tensor[_{p_1}]{\times}{_{p_1'}} M' \ar[dl, "\pr_1"] \ar[dr, "\pr_2"'] & & \\
      & (M, \cl F) \ar[dl, "p_0"] \ar[dr, "p_1"'] & & (M', \cl F') \ar[dl, "p_1'"] \ar[dr, "p_2'"']& \\
      (N_0, \cl F_0) & & (N_1, \cl F_1) & & (N_2, \cl F_2).
    \end{tikzcd}
  \end{equation*}
The bottom two rows indicate the assumed Molino transverse equivalences. Denote the fiber product at the top by $M''$. This is a manifold because $p_1$ is a submersion. The projections are surjective submersions because the $p_i$ and $p_i'$ are surjective submersions, thus so too are the compositions $p_0 \circ \pr_1$ and $p_2' \circ \pr_2$. We claim that, moreover, these have connected fibers, and pull back $\cl{F}_0$ and $\cl{F}_2$, respectively, to the same singular foliation of $M''$.

  First, in light of Lemma \ref{lem:1}, it suffices to show, without loss of generality, that $\pr_1$ has connected fibers. Let $x \in M$, and take $(x,z)$ and $(x,z')$ in $M''$. Then $z$ and $z'$ are in the same fiber of $p_1'$, which is connected, hence there is a path $\gamma'$ joining them in this fiber. Then $(x,\gamma'(t))$ is a path in the fiber of $\pr_1$ joining $(x,z)$ and $(x,z')$, as required.

  The second assertion follows from the fact that any leaf $L$ of $\cl{F}$ has the form $p_0^{-1}(L_0) = p_1^{-1}(L_1)$, and any leaf $L'$ of $\cl{F}'$ has the form $(p_1')^{-1}(L_1') = (p_2')^{-1}(L_2')$, and from using the usual property of pullbacks that $(p_0 \circ \pr_1)^{-1}\cl{F}_0 = \pr_1^{-1}p_0^{-1}\cl{F}_0$, etc. This proves that $(M'', (p_0 \circ \pr_1)^{-1}(\cl{F}_0))$, with the surjective submersions $p_0 \circ \pr_1$ and $p_2' \circ \pr_2$, gives a Molino transverse equivalence between $\cl{F}_0$ and $\cl{F}_2$.
\end{proof}

We now relate Molino transverse equivalence and diffeology. One direction is straightforward: a Molino transverse equivalence induces a diffeological diffeomorphism of the leaf spaces.
\begin{proposition}
\label{prop:6}
Suppose $p:M \to N$ is a surjective submersion with connected fibers, and let $\cl{F}$ be a singular foliation of $N$. Then the map
\begin{equation*}
  \varphi:N/\cl{F} \to M/p^{-1}\cl{F}, \quad L \mapsto p^{-1}(L)
\end{equation*}
is a diffeological diffeomorphism. Consequently, if $p_i:M \to N_i$ witness a Molino transverse equivalence between $\cl{F}_i$, then the map
\begin{equation*}
  N_0/\cl{F}_0 \to N_1/\cl{F}_1, \quad L \mapsto p_1(p_0^{-1}(L))
\end{equation*}
is a diffeological diffeomorphism.
\end{proposition}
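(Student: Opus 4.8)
The plan is to prove the first statement—that $\varphi$ is a diffeological diffeomorphism—and then deduce the second by composing two such maps. Write $\pi_N : N \to N/\cl{F}$ and $\pi_M : M \to M/p^{-1}\cl{F}$ for the quotient maps. First I would check that $\varphi$ is a bijection. By Proposition \ref{prop:4} and Definition \ref{def:10}, the leaves of $p^{-1}\cl{F}$ are exactly the sets $p^{-1}(L)$ with $L$ a leaf of $\cl{F}$, so $\varphi$ is well defined and surjective; injectivity follows because $p$ is surjective, so $p^{-1}(L) = p^{-1}(L')$ forces $L = p(p^{-1}(L)) = p(p^{-1}(L')) = L'$. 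The rest of the argument rests on the single commuting identity
\begin{equation*}
  \pi_M = \varphi \circ \pi_N \circ p,
\end{equation*}
which simply records that the $p^{-1}\cl{F}$-leaf of a point $x \in M$ is $p^{-1}$ of the $\cl{F}$-leaf of $p(x)$; composing on the left with $\varphi^{-1}$ gives the companion identity $\varphi^{-1} \circ \pi_M = \pi_N \circ p$.

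Next I would establish smoothness in both directions using the universal property of the quotient diffeology, namely that a map out of a quotient is smooth if and only if its precomposition with the quotient map is smooth. For $\varphi^{-1}$ this is immediate: the map $\pi_N \circ p$ is smooth, it equals $\varphi^{-1} \circ \pi_M$, so $\varphi^{-1}$ is smooth. For $\varphi$ itself, the universal property reduces the problem to showing that $\varphi \circ \pi_N : N \to M/p^{-1}\cl{F}$ is smooth, i.e. that $\varphi \circ \pi_N \circ \beta$ is a plot of $M/p^{-1}\cl{F}$ for every plot $\beta : V \to N$. Here I would use that $p$ is a submersion: about each $r_0 \in V$ there is a local section $s$ of $p$ defined near $\beta(r_0)$, and setting $\tilde{\beta} := s \circ \beta$ on a small enough neighbourhood of $r_0$ gives a smooth lift with $p \circ \tilde{\beta} = \beta$. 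The identity $\pi_M = \varphi \circ \pi_N \circ p$ then yields $\varphi \circ \pi_N \circ \beta = \pi_M \circ \tilde{\beta}$ locally, which exhibits $\varphi \circ \pi_N \circ \beta$ as a plot of the quotient diffeology. Hence $\varphi \circ \pi_N$, and therefore $\varphi$, is smooth.

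The main obstacle is precisely the smoothness of $\varphi$ (the direction from $N/\cl{F}$ up to $M/p^{-1}\cl{F}$), since—unlike $\varphi^{-1}$—it cannot be read off directly from the universal property and genuinely requires lifting plots; this is exactly where the submersion hypothesis on $p$ enters, through the existence of local sections. With the first statement in hand, the consequence is formal. The maps $p_i : M \to N_i$ satisfy $p_i^{-1}(\cl{F}_i) = \cl{F}$, so by the first part each $\varphi_i : N_i/\cl{F}_i \to M/\cl{F}$, $L \mapsto p_i^{-1}(L)$, is a diffeological diffeomorphism. The composite $\varphi_1^{-1} \circ \varphi_0 : N_0/\cl{F}_0 \to N_1/\cl{F}_1$ is therefore a diffeological diffeomorphism, and unwinding the definitions—$\varphi_1^{-1}$ sends the $\cl{F}$-leaf $p_0^{-1}(L)$ to the unique $\cl{F}_1$-leaf $L_1$ with $p_1^{-1}(L_1) = p_0^{-1}(L)$, namely $L_1 = p_1(p_0^{-1}(L))$ by surjectivity of $p_1$—shows that $\varphi_1^{-1} \circ \varphi_0$ is exactly the map $L \mapsto p_1(p_0^{-1}(L))$, as required.
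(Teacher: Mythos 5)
Your proposal is correct and follows essentially the same route as the paper: the paper's proof consists of the same commuting square $\pi_M = \varphi \circ \pi_N \circ p$ and the one-line observation that $\pi_M$ and $\pi_N \circ p$ are local subductions, which is precisely what your explicit plot-lifting argument (via local sections of the submersion $p$) and your use of the universal property of the quotient diffeology unpack. No gaps; you have simply written out the details the paper compresses into the phrase ``local subduction.''
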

\begin{proof}
  The map $\varphi$ is well-defined by definition of $p^{-1}\cl{F}$, and its inverse if $p^{-1}(L) \mapsto L$. Both $\varphi$ and $\varphi^{-1}$ fit in the following diagram:
  \begin{equation*}
    \begin{tikzcd}
      M \ar[r, "p"] \ar[d, "\pi_1"] & N \ar[d, "\pi_2"]\\
      M/p^{-1}(\cl{F}) \ar[r, bend left, "\varphi^{-1}"]  & N/\cl{F} \ar[l, bend left, "\varphi"].
    \end{tikzcd}
  \end{equation*}
  Because $\pi_1$, and $\pi_2 \circ p$, are local subductions, so are $\varphi$ and $\varphi^{-1}$.
\end{proof}

\begin{example}
  \label{ex:7}
  Consider the singular foliations induced by the action of $O(n)$ on $\R^n$, from Example \ref{ex:6}. Because the $\R^n/O(n)$ are not diffeomorphic for different $n$, the singular foliations of $\R^n$ are not Molino transversely equivalent.
  \eoe
\end{example}

The converse is more subtle. Indeed, there exist two regular foliations, $(N_i,\cl{F}_i)$, with diffeomorphic leaf spaces, but which are not Molino transverse equivalent. We will see this by detouring through Morita equivalence of Lie groupoids.

\subsection{Molino versus Morita equivalence}
\label{sec:molino-versus-morita}

We saw in Example \ref{ex:1} and Example \ref{ex:3} that every Lie groupoid induces a singular foliation on its base manifold. For Lie groupoids, there is a long established notion of transverse, or weak, equivalence, called \emph{Morita equivalence}. In this subsection, we compare the notions of Molino and Morita equivalence. See Appendix \ref{sec:review-lie-groupoids} for the relevant language and constructions from the theory of Lie groupoids, which we use freely here.

\begin{proposition}
  \label{prop:7}
  Let $\cl{G} \rra M$ and $\cl{H} \rra N$ be two source-connected Lie groupoids with Hausdorff arrow spaces. If $\cl{G}$ and $\cl{H}$ are Morita equivalent, then the singular foliations $\cl{F}_{\cl{G}}$ and $\cl{F}_{\cl{H}}$ on $M$ and $N$ are Molino transverse equivalent.
\end{proposition}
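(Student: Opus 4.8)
The plan is to realize the Morita equivalence by a principal bibundle and use its total space directly as the middle term of a Molino transverse equivalence. I would choose a manifold $P$ carrying commuting left $\cl{G}$- and right $\cl{H}$-actions, with moment maps $\alpha:P \to M$ and $\beta:P \to N$ that are surjective submersions, such that $\alpha$ exhibits $P$ as a principal right $\cl{H}$-bundle over $M$ (so $\alpha$ is the quotient map for the $\cl{H}$-action) and $\beta$ exhibits $P$ as a principal left $\cl{G}$-bundle over $N$. The Hausdorff arrow-space hypothesis is what should guarantee that $P$ is itself a manifold in the sense of this paper (Hausdorff and second-countable), so that $(P, \alpha^{-1}\cl{F}_{\cl{G}})$ is an admissible middle object for Definition \ref{def:11}. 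I would then set $p_0 = \alpha$ and $p_1 = \beta$.

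First I would verify the hypotheses of Definition \ref{def:11}. Both $\alpha$ and $\beta$ are surjective submersions by the bibundle axioms, so it remains to see that their fibers are connected. The fibers of $\alpha$ are exactly the $\cl{H}$-orbits in $P$; since the $\cl{H}$-action is free, the orbit through $p$ is the image of the source fiber $s^{-1}(\beta(p)) \subseteq \cl{H}$ under the orbit map, and source-connectedness of $\cl{H}$ makes it connected. Symmetrically, the fibers of $\beta$ are the $\cl{G}$-orbits, connected because $\cl{G}$ is source-connected. The same source-connectedness lets me identify the leaves of $\cl{F}_{\cl{G}}$ (resp.\ $\cl{F}_{\cl{H}}$), which a priori are connected components of orbits (Example \ref{ex:3}), with the full $\cl{G}$-orbits in $M$ (resp.\ $\cl{H}$-orbits in $N$).

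The heart of the argument is to match leaves: for a $\cl{G}$-orbit $L \subseteq M$ I claim $\alpha^{-1}(L) = \beta^{-1}(L')$, where $L' \subseteq N$ is the $\cl{H}$-orbit $\beta(\alpha^{-1}(L))$. Since $\alpha$ is $\cl{G}$-equivariant (covering the $\cl{G}$-action on $M$) and $\cl{H}$-invariant, the set $\alpha^{-1}(L)$ is invariant under both actions. I would show it is a single $(\cl{G}\times\cl{H})$-orbit by a division argument: given $q$ with $\alpha(q) \in L$, pick $g \in \cl{G}$ with $t(g) = \alpha(q)$ and $s(g) = \alpha(p)$, so that $q$ and $g\cdot p$ lie in a common $\alpha$-fiber, i.e.\ a common $\cl{H}$-orbit, whence $q = g\cdot p\cdot h$ for some $h$. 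Projecting this bi-orbit by $\beta$ (which is $\cl{G}$-invariant) yields exactly the $\cl{H}$-orbit $L'$, and the same reasoning with the roles of $\alpha,\beta$ and $\cl{G},\cl{H}$ exchanged gives $\beta^{-1}(L') = \alpha^{-1}(L)$. Thus $\alpha$ and $\beta$ carry the common bi-orbit decomposition of $P$, and invoking Proposition \ref{prop:4} and Definition \ref{def:10} (so that the pullbacks are genuine singular foliations with leaves $\alpha^{-1}(L)$ and $\beta^{-1}(L')$) I conclude $\alpha^{-1}(\cl{F}_{\cl{G}}) = \beta^{-1}(\cl{F}_{\cl{H}})$, which is precisely a Molino transverse equivalence.

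I expect the main obstacle to be administrative rather than conceptual: pinning down the precise bibundle conventions from the appendix (which moment map is principal for which groupoid, and the attendant freeness and transitivity-on-fibers statements) and, relatedly, confirming that the Hausdorff arrow-space hypothesis genuinely forces $P$ to be Hausdorff so that Definition \ref{def:11} applies. The division step in the leaf-matching is the only place where principality is used in full force, so I would isolate and state that property carefully before invoking it.
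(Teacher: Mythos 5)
Your proposal is correct and follows essentially the same route as the paper: take an invertible bibundle $P$, check the anchors are surjective submersions with fibers connected via source-connectedness and the free-and-transitive fiber actions, and match leaves by the same division argument ($q = g\cdot p\cdot h$). The one point you flag but leave open --- that $P$ can be taken Hausdorff --- is exactly what the paper settles by invoking Lemma \ref{lem:3} (a Hausdorff invertible bibundle exists between Morita equivalent Hausdorff groupoids, citing Garmendia--Zambon), so you should cite that rather than hope the hypothesis ``forces'' an arbitrary bibundle to be Hausdorff.
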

\begin{proof}
  Fix an invertible bibundle $P:\cl{G} \to \cl{H}$ (see Definition \ref{def:15} for details)
\begin{equation*}
  \begin{tikzcd}
      \cl{G} \circlearrowright &  P  \ar[dl, "a"'] \ar[dr, "a'"] & \circlearrowleft \cl{H} \\
      M & &N.
    \end{tikzcd}
\end{equation*}
We claim that this exhibits a Molino transverse equivalence. First, since $\cl{G}$ and $\cl{H}$ are Hausdorff, by Lemma \ref{lem:3}, we may choose $P$ to be Hausdorff. Both $a$ and $a'$ are surjective submersions by assumption. Since $\cl{G}$ acts freely and transitively on the fibers of $a'$, the fiber of $a'$ over $x_0'$ is diffeomorphic to $s^{-1}(x_0') \cdot p_0$ for any fixed $p_0$ in the fiber, and this is connected because $\cl{G}$ is source-connected. Similarly, the fibers of $a$ are connected.

All that remains is to show the leaves of $a^{-1}\cl{F}_{\cl{G}}$ coincide with the leaves of $(a')^{-1}(\cl{F}_{\cl{H}})$. Let $\cl{O}$ be the orbit through a fixed $x_0$ in $M$. Choose $p_0 \in P$ with $a(p_0) = x_0$, and let $\cl{O}'$ be the orbit of $a'(p_0)$ in $N$. We claim $a^{-1}(\cl{O}) = (a')^{-1}(\cl{O}')$.

Letting $a(p) \in \cl{O}$, we may show $a'(p) \in \cl{O'}$; the converse direction is similar. There is an arrow $g:x_0 \to a(p)$. By $\cl{G}$-invariance of $a'$, both $g \cdot p_0$ and $p$ are in the same fiber of $a'$. Since the $\cl{H}$ action is transitive on this fiber, there is some arrow $h$ with $g \cdot p_0 \cdot h = p$. This action is only possible if $h$ is an arrow $h:a'(p) \to a'(g\cdot p_0) = a'(p_0)$. Thus $a'(p)$ and $a'(p_0)$ are in the same orbit $\cl{O'}$, as required.
\end{proof}
\begin{remark}
  \label{rem:8}
  The Hausdorff assumption is material, because if $\cl{G}$ and $\cl{H}$ are not Hausdorff, it is not clear that we may choose the invertible bibundle $P$ above to be Hausdorff. One possible attempt to treat the case of non-Hausdorff Lie groupoids is to modify the definition of Molino transverse equivalence to allow for non-Hausdorff manifolds. But doing so begs the question of how to define singular foliations on non-Hausdorff manifolds, which we leave for another time. Garmendia and Zambon encounter a similar problem in the case of Androulidakis-Skandalis singular foliations, and suggest it may be remedied by viewing Androulidakis-Skandalis singular foliations as sheaves instead of submodules, as in Remark \ref{rem:7}; see \cite[Section 4.1]{Garmendia2019}.
  \eor
\end{remark}

Given an arbitrary singular foliation $(N, \cl{F})$, it is unknown whether there is a Lie groupoid $\cl{G} \rra N$ which induces $\cl{F}$. Furthermore, it is possible that two non-Morita equivalent Lie groupoids induce the same singular foliation.
\begin{example}
  \label{ex:8}
  Consider the actions of the general linear group $\operatorname{GL}(n)$, and the special linear group $\operatorname{SL}(n)$, on $\R^n$. Both induce the same singular foliation of $\R^n$, whose leaves are the origin and its complement. However, the action groupoids $\operatorname{GL}(n) \ltimes \R^n$ and $\operatorname{SL}(n) \ltimes \R^n$ are not Morita equivalent. This is because their stabilizer groups at the origin are not isomorphic.
  \eoe
\end{example}

For regular foliations $(N, \cl{F})$, however, we have a distinguished groupoid inducing the foliation, the holonomy groupoid $\Hol(\cl{F})$ (see Example \ref{ex:13} for some details). It is reasonable to ask whether Molino transverse equivalence of regular foliations implies their holonomy groupoids are Morita equivalent, and indeed this is the case:

\begin{proposition}
  \label{prop:8}
  If two regular foliations are Molino transverse equivalent, then their holonomy groupoids are Morita equivalent.
\end{proposition}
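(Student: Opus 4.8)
The plan is to show that each of the two submersions witnessing the Molino transverse equivalence induces a Morita equivalence between the holonomy groupoid upstairs and the holonomy groupoid downstairs, and then to conclude by transitivity of Morita equivalence. Fix regular foliations $(N_0,\cl{F}_0)$ and $(N_1,\cl{F}_1)$ together with $(M,\cl{F})$ and surjective submersions with connected fibers $p_i:M\to N_i$ satisfying $p_i^{-1}(\cl{F}_i)=\cl{F}$. First I would observe that $\cl{F}$ is again \emph{regular}: since $T_x(p_i^{-1}(L))=(dp_i)_x^{-1}(T_{p_i(x)}L)$ and $p_i$ has constant rank, the leaf dimension of $\cl{F}$ equals $\dim\ker dp_i+\dim L$, which is constant. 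Hence all three holonomy groupoids $\Hol(\cl{F}_0)$, $\Hol(\cl{F})$, and $\Hol(\cl{F}_1)$ are defined, and it suffices to prove the following lemma and apply it to $p_0$ and $p_1$: if $p:M\to N$ is a surjective submersion with connected fibers and $\cl{F}$ is a regular foliation of $N$, then $\Hol(M,p^{-1}\cl{F})$ and $\Hol(N,\cl{F})$ are Morita equivalent.

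To prove the lemma I would identify $\Hol(M,p^{-1}\cl{F})$ with the \emph{pullback groupoid} $p^{*}\Hol(N,\cl{F})$, whose arrows are triples $(x,h,y)$ with $h$ a holonomy arrow from $p(x)$ to $p(y)$, source $x$ and target $y$. The canonical projection $p^{*}\Hol(N,\cl{F})\to\Hol(N,\cl{F})$ covering $p$ is an essential equivalence, because $p$ is a surjective submersion and the defining square is a fibre product; such equivalences induce Morita equivalences (see Appendix \ref{sec:review-lie-groupoids}). Thus once the identification $\Hol(M,p^{-1}\cl{F})\cong p^{*}\Hol(N,\cl{F})$ is in hand, the lemma follows.

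The identification rests on the fact that $dp_x$ descends to an isomorphism $T_xM/T_x(p^{-1}\cl{F})\cong T_{p(x)}N/T_{p(x)}\cl{F}$, so that a slice transverse to $p^{-1}\cl{F}$ at $x$ is carried diffeomorphically by $p$ onto a slice transverse to $\cl{F}$ at $p(x)$. Consequently the holonomy germ of a leafwise path $\gamma$ in $M$ is $p$-conjugate to the holonomy germ of its projection $p\circ\gamma$, and I would use this to define a morphism $\Phi:\Hol(M,p^{-1}\cl{F})\to p^{*}\Hol(N,\cl{F})$ by $[\gamma]\mapsto(x,[p\circ\gamma],y)$. The conjugation statement makes $\Phi$ well defined and injective at once. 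For surjectivity I would lift a representative path $\delta$ of a given holonomy arrow $p(x)\to p(y)$ through the surjective submersion $p|_{p^{-1}(L)}:p^{-1}(L)\to L$ to a leafwise path in the single leaf $p^{-1}(L)$ starting at $x$ (by Proposition \ref{prop:4} the preimage $p^{-1}(L)$ is one leaf, so every path in it is automatically leafwise), ending at some $y'$ in the fibre over $p(y)$; since that fibre is connected and contained in $p^{-1}(L)$, I then concatenate with a path inside the fibre from $y'$ to $y$, which projects to a constant and hence has trivial holonomy. This produces a preimage of $(x,[\delta],y)$.

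The main obstacle I anticipate is the holonomy-identification step together with verifying that $\Phi$ is an isomorphism of \emph{Lie} groupoids rather than merely of abstract groupoids: one must check that $\Phi$ is a diffeomorphism on arrow spaces, matching the manifold structure on $\Hol(M,p^{-1}\cl{F})$ coming from its holonomy charts with the fibre-product smooth structure on $p^{*}\Hol(N,\cl{F})$. The hypothesis of connected fibres is essential and enters in two places: in Proposition \ref{prop:4}, to know $p^{-1}(L)$ is a single leaf, and in the surjectivity of $\Phi$, to correct the endpoint of a lifted path within a connected fibre. I would also note that holonomy groupoids of regular foliations may be non-Hausdorff, so Morita equivalence must be understood in the bibundle sense valid for non-Hausdorff Lie groupoids; the essential-equivalence argument above is unaffected by this.
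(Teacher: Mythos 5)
Your proposal is correct and, in substance, reconstructs the argument the paper itself defers to: the paper offers no independent proof of Proposition \ref{prop:8}, instead citing Garmendia and Zambon (Proposition 3.30 of \cite{Garmendia2019}, outlined after their Theorem 3.21), and their proof in the regular case is exactly your route --- show that $\Hol(M,p^{-1}\cl{F})$ is the pullback groupoid $p^*\Hol(N,\cl{F})$, observe that the pullback along a surjective submersion is a Morita equivalence, and conclude by transitivity. Your preliminary observations (regularity of the pullback foliation, the normal-space isomorphism identifying transversals, and the resulting $p$-conjugacy of holonomy germs) are all sound, and you correctly flag the smooth-structure verification as the remaining routine work.

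One small point to tighten: you assert that the representative path $\delta$ lifts through the surjective submersion $p|_{p^{-1}(L)}:p^{-1}(L)\to L$, but paths do not lift through arbitrary surjective submersions (only local lifts are guaranteed), so this step needs justification. The fix is the connected-fibers hypothesis again, used not only at the end to correct the endpoint but also \emph{during} the lifting: at each stage where a local lift terminates, move within the (connected) fiber to a point lying in a product chart that allows the lift to continue; the inserted fiber paths project to pauses and do not change the holonomy class of the projection. With that amendment the surjectivity argument is complete.
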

This coincides with Proposition 3.30 in Garmendia and Zambon's paper \cite{Garmendia2019}, whose proof in the regular case is outlined after their Theorem 3.21. We recall that Garmendia and Zambon work with Androulidakis-Skandalis singular foliations, and not Stefan singular foliations. However, in the regular case, these notions coincide, so their proof of this statement works in this setting and we refer the reader there for details.

\begin{remark}
  \label{rem:9}
  In fact, Garmendia and Zambon prove a stronger statement. As we discussed in Section \ref{sec:andr-skand-sing}, Androulidakis-Skandalis singular foliations are not generally induced by any Lie groupoid. However, Androulidakis and Skandalis in \cite{Androulidakis2009} constructed an open topological groupoid for every Androulidakis-Skandalis singular foliation $\cl{F}$, called the \emph{holonomy groupoid} of $\cl{F}$, which coincides with the usual holonomy groupoid in the regular case. Garmendia and Zambon proved that, provided the holonomy groupoids are Hausdorff, they are Morita equivalent (as open topological groupoids) if and only if the associated singular foliations are what they call Hausdorff Morita equivalent.
  \eor
\end{remark}

\begin{corollary}
  \label{cor:3}
  Two regular foliations with Hausdorff holonomy groupoids are Molino transverse equivalent if and only if their holonomy groupoids are Morita equivalent.
\end{corollary}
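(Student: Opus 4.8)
The plan is to combine the two preceding propositions, applying each to one direction of the biconditional. Write $(N_0, \cl{F}_0)$ and $(N_1, \cl{F}_1)$ for the two regular foliations, and let $\Hol(\cl{F}_i)$ denote their holonomy groupoids, which by hypothesis are Hausdorff.

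For the forward implication, suppose $\cl{F}_0 \cong \cl{F}_1$. This is precisely the hypothesis of Proposition \ref{prop:8}, whose conclusion is that $\Hol(\cl{F}_0)$ and $\Hol(\cl{F}_1)$ are Morita equivalent. So this direction requires no further work beyond citing that proposition.

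For the reverse implication, suppose $\Hol(\cl{F}_0)$ and $\Hol(\cl{F}_1)$ are Morita equivalent. I would invoke Proposition \ref{prop:7}, whose conclusion is that the induced foliations $\cl{F}_{\Hol(\cl{F}_0)}$ and $\cl{F}_{\Hol(\cl{F}_1)}$ are Molino transverse equivalent. First I would verify the hypotheses of Proposition \ref{prop:7}: the holonomy groupoid of a regular foliation is a Lie groupoid, its arrow space is Hausdorff by assumption, and its source fibers are the holonomy covers of the leaves, which are connected because the leaves are connected, so each $\Hol(\cl{F}_i)$ is source-connected. Thus Proposition \ref{prop:7} applies and gives $\cl{F}_{\Hol(\cl{F}_0)} \cong \cl{F}_{\Hol(\cl{F}_1)}$.

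It then remains only to identify the induced foliation $\cl{F}_{\Hol(\cl{F}_i)}$ with the original $\cl{F}_i$. By construction of the holonomy groupoid (see Example \ref{ex:13}), its orbits are exactly the leaves of $\cl{F}_i$, so $\cl{F}_{\Hol(\cl{F}_i)} = \cl{F}_i$; substituting this into the conclusion of Proposition \ref{prop:7} yields $\cl{F}_0 \cong \cl{F}_1$. The whole argument is bookkeeping once the two propositions are in hand, so there is no genuine obstacle; the only points needing care are the source-connectedness of $\Hol(\cl{F}_i)$ and the identification $\cl{F}_{\Hol(\cl{F}_i)} = \cl{F}_i$, both of which are standard features of the holonomy groupoid of a regular foliation.
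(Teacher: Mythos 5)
Your proposal is correct and is exactly the argument the paper intends: the corollary is stated without proof precisely because it follows by combining Proposition \ref{prop:8} (forward direction) with Proposition \ref{prop:7} (reverse direction), using the facts, noted in Example \ref{ex:13}, that $\Hol(\cl{F}_i)$ is source-connected and its orbits are the leaves of $\cl{F}_i$. Your extra care in checking the hypotheses of Proposition \ref{prop:7} matches what the paper leaves implicit.
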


We are now prepared to illustrate the example alluded to at the end of Section \ref{sec:molino-transv-equiv}, of two regular foliations with diffeomorphic leaf spaces, but which are not Molino transverse equivalent. This example also appears in \cite{Karshon2022}, in a different context.

\begin{example}
  \label{ex:9}
  Let $h:\R \to \R$ be a smooth non-negative function that is flat\footnote{this means that $h$ and all its derivatives vanish at the point} at $0$ and is positive everywhere else, such that the vector field $X:= h\pdd{}{x}$ is complete. Let $\psi := \Phi^X_1$ denote the time-1 flow of $X$, and set
  \begin{equation*}
    \hat{\psi}(x) :=
    \begin{cases}
      \psi(x) &\text{if } x \geq 0 \\
      \psi^{-1}(x) &\text{if } x < 0.
    \end{cases}
  \end{equation*}
  Both $\psi$ and $\hat{\psi}$ are smooth. By iterating $\psi$, we get a $\Z$-action on $\R$, and also one on $\R^2$ given by
  \begin{equation*}
    k \cdot (t,x) := (t+k, \psi^k(x)).
  \end{equation*}
  This action preserves the foliation of $\R^2$ by horizontal lines (but not the leaves themselves), and the action is free and properly discontinuous. Therefore, passing to the quotient, $M_\psi := \R^2/\psi$ is a manifold and we have the quotient foliation $\cl{F}_\psi$. Moreover, $M_\psi/\cl{F}_\psi$ is diffeomorphic to $\R/\psi$. Similarly, we can form $M_{\hat{\psi}}/\cl{F}_{\hat{\psi}}$, and this is diffeomorphic to $\R/\hat{\psi}$. But $\R / \psi = \R/\hat{\psi}$, so we have two foliations foliation with diffeologically diffeomorphic leaf spaces. However, by {\cite[Proposition 7.1]{Karshon2022}}, the \'{e}tale holonomy groupoids (see Example \ref{ex:13}) associated to these foliations are not Morita equivalent. Therefore their holonomy groupoids are not Morita equivalent, hence by Proposition \ref{prop:8}, the foliations are not transverse equivalent.
  \eoe
\end{example}
\subsection{Diffeology and Molino equivalence for quasifolds}
\label{sec:diff-molino-equiv}

While in general, a diffeological diffeomorphism between the leaf spaces of singular foliations does not induce a Molino transverse equivalence (c.f.\ Example \ref{ex:9}), there is a class of regular foliations for which this holds. These are regular foliations whose holonomy groupoids are Morita equivalent to quasifold groupoids, introduced in \cite{Karshon2022}.

\begin{definition}
  \label{def:12}
  A $n$-\emph{quasifold groupoid} is a Lie groupoid $\cl{G} \rra M$, with Hausdorff arrow space, such that: for each $x \in M$, there is an open neighbourhood $U$ of $x$, a countable group $\Gamma$ acting affinely on $\R^n$, an open subset $\mathsf{V}$ of $\R^n$, and an isomorphism\footnote{invertible functor with smooth inverse} of Lie groupoids $\cl{G}|_U \to (\Gamma \ltimes \R^n)|_{\mathsf{V}}$.
  \eod
\end{definition}
\begin{proposition}
  \label{prop:9}
  Assume $(N_i,\cl{F}_i)$ are regular foliations, and their holonomy groupoids $\Hol(\cl{F}_i)$ are each Morita equivalent to effective quasifold groupoids $\cl{G}_i \rra M_i$. If the leaf spaces $N_i/\cl{F}_i$ are diffeologically diffeomorphic, then the foliations $\cl{F}_i$ are Molino transverse equivalent.
\end{proposition}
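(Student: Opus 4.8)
The plan is to reduce the statement, via Corollary \ref{cor:3}, to a question purely about Morita equivalence of groupoids, and then to invoke the rigidity property of effective quasifold groupoids established in \cite{Karshon2022}: that their Morita equivalence class is detected by the diffeology of their orbit space. First I would note that by Corollary \ref{cor:3} (whose Hausdorffness hypothesis is available, since each $\Hol(\cl{F}_i)$ is Morita equivalent to the Hausdorff groupoid $\cl{G}_i$), it suffices to prove that $\Hol(\cl{F}_0)$ and $\Hol(\cl{F}_1)$ are Morita equivalent. Since Morita equivalence is transitive and $\Hol(\cl{F}_i)$ is Morita equivalent to $\cl{G}_i$ by hypothesis, this reduces further to showing that the two effective quasifold groupoids $\cl{G}_0$ and $\cl{G}_1$ are themselves Morita equivalent.

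Next I would translate the diffeological hypothesis into the language of the $\cl{G}_i$. Morita equivalent Lie groupoids have diffeologically diffeomorphic orbit spaces: an invertible bibundle descends to a diffeomorphism of orbit spaces because both orbit maps are local subductions whose fibers are identified by the bibundle, exactly the mechanism used in Proposition \ref{prop:6}. Applying this to the Morita equivalence between $\Hol(\cl{F}_i)$ and $\cl{G}_i$, the orbit space of $\cl{G}_i$ is diffeomorphic to the orbit space of $\Hol(\cl{F}_i)$, which is precisely $N_i/\cl{F}_i$. The assumed diffeomorphism $N_0/\cl{F}_0 \cong N_1/\cl{F}_1$ then yields a diffeological diffeomorphism between the orbit spaces of $\cl{G}_0$ and $\cl{G}_1$.

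Finally I would appeal to the reconstruction result for effective quasifold groupoids from \cite{Karshon2022}: an effective quasifold groupoid is determined up to Morita equivalence by the diffeology of its orbit space, so two effective quasifold groupoids with diffeomorphic orbit spaces are Morita equivalent. Applying this to $\cl{G}_0$ and $\cl{G}_1$ gives the required Morita equivalence, and unwinding the two reductions (transitivity of Morita equivalence, then Corollary \ref{cor:3}) completes the proof.

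The main obstacle is this last step, which carries all the genuine content; everything preceding it is bookkeeping with transitivity of Morita equivalence and the orbit-space functoriality already visible in Proposition \ref{prop:6}. The reconstruction statement generalizes the classical fact that an effective orbifold groupoid is recovered up to Morita equivalence from its underlying diffeological space, and it is the effectiveness hypothesis that makes such a reconstruction possible: without it the diffeology of the orbit space cannot detect generic (ineffective) isotropy, and I would expect the proposition to fail, mirroring the situation for non-effective orbifolds.
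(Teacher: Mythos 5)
Your proposal is correct and follows essentially the same route as the paper: both arguments pass from the hypothesis to a diffeomorphism of the orbit spaces $M_i/\cl{G}_i$ via Morita invariance of the orbit space, invoke the reconstruction result \cite[Proposition 5.4]{Karshon2022} for effective quasifold groupoids to obtain Morita equivalence of the $\cl{G}_i$ (hence of the $\Hol(\cl{F}_i)$), and then apply Corollary \ref{cor:3} after noting that Hausdorffness is Morita invariant and the $\cl{G}_i$ are Hausdorff by definition. The only difference is presentational: you state the reductions up front, while the paper verifies the Hausdorffness hypothesis at the end.
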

\begin{proof}
  Morita equivalent Lie groupoids have diffeomorphic orbit spaces, therefore the $M_i/\cl{G}_i$ are diffeomorphic to $N_i/\cl{F}_i$, for each $i=0,1$. It follows from the assumption that the $M_i/\cl{G}_i$ are diffeomorphic to each other. Then, because the $\cl{G}_i$ are effective quasifold groupoids, we use {\cite[Proposition 5.4]{Karshon2022}} to conclude that the $\cl{G}_i$ are Morita equivalent. Therefore the $\Hol(\cl{F}_i)$ are Morita equivalent.

  If we show each $\Hol(\cl{F}_i)$ is Hausdorff, we may conclude the proof with Corollary \ref{cor:3}. But to be Hausdorff is invariant under Morita equivalence ({\cite[Proposition 5.3]{Moerdijk2003}}, and the $\cl{G}_i$ are Hausdorff by definition of quasifold groupoid, hence the holonomy groupoids are Hausdorff, as required.
\end{proof}

\begin{remark}
  \label{rem:10}
  The assumption that the $\cl{G}_i$ are effective is unnecessary. Each $\Hol(\cl{F}_i)$ is Morita equivalent to its \'{e}tale holonomy groupoid, as outlined in Example \ref{ex:13}, and these are effective. To be effective is stable under Morita equivalence ({\cite[Example 5.21 (2)]{Moerdijk2003}}, hence the $\cl{G}_i$ are necessarily effective.
  \eor
\end{remark}

In Proposition \ref{prop:9}, we impose a condition on the holonomy groupoid $\Hol(\cl{F}_i)$, and not on the foliation itself. There are cases, however, where we can deduce the holonomy groupoid is Morita equivalent to a quasifold groupoid directly from properties of the foliation. We end this brief section with an example.
\begin{example}
  \label{ex:10}
  A distinguished class of quasifold groupoids are proper \'{e}tale groupoids, which are sometimes called orbifold groupoids. Their local linear models arise as a consequence of the linearization theorem for proper groupoids, for example see \cite{Crainic2013}; the resulting groups $\Gamma$ are the stabilizer subgroups $\cl{G}_x$, which are finite. If all of the leaves of a regular foliation $\cl{F}$ are compact and have finite holonomy groups, by the Reeb stability theorem the holonomy groupoid $\Hol(\cl{F})$ is proper ({\cite[Example 5.28 (2)]{Moerdijk2003}}). Because to be proper is stable under Morita equivalence ({\cite[Proposition 5.26]{Moerdijk2003}}), we may apply Proposition \ref{prop:9} to such foliations $\cl{F}$.
  \eoe
\end{example}

\section{Basic cohomology of singular foliations}
\label{sec:basic-cohom-sing}

Given a singular foliation $(M, \cl{F})$, we have the associated complex of basic differential forms. In this section, we will show that Molino transverse equivalent singular foliations have identical complexes of basic forms, strengthening the notion that the basic complex captures the transverse geometry. We will end with a discussion of the comparison between the complex of basic differential forms, and the complex of diffeological differential forms on the leaf space.

\begin{definition}
  \label{def:13}
  A differential form $\alpha \in \cplx{}{M}$ is $\cl{F}$-\define{basic} if, for every vector field $X$ tangent to $\cl{F}$, both
  \begin{equation*}
    \iota_X\alpha = 0 \text{ and } \cl{L}_X\alpha = 0.
  \end{equation*}
  The first condition says $\alpha$ is \define{horizontal}. The second says $\alpha$ is \define{invariant}. We denote the collection of basic differential forms by $\cplx{b}{M,\cl{F}}$. This is a de-Rham subcomplex of $\cplx{}{M}$, with the usual differential. Its cohomology $H_b^\bullet(M, \cl{F})$ is the \define{basic cohomology} associated to $\cl{F}$.
  \eod
\end{definition}

\begin{example}
  \label{ex:11}
  If $G$ is a connected Lie group acting smoothly on $M$, and $\cl{F}$ is the associated singular foliation, the $\cl{F}$-basic forms are precisely those that are horizontal and $G$-invariant. More generally, if $\cl{G}$ is a source-connected Lie groupoid, and $\cl{F}$ is the associated singular foliation, the $\cl{F}$-basic forms are those for which $s^*\alpha = t^*\alpha$, see {\cite[Proposition 5.5]{Miyamoto2022}}.
  \eoe
\end{example}

\begin{proposition}
  \label{prop:10}
 Suppose $p:M \to N$ is a surjective submersion with connected fibers, and that $(N, \cl{F})$ is a singular foliation. Then $p^*$ is an isomorphism from $\cplx{b}{N, \cl{F}}$ to $\cplx{b}{M, p^{-1}\cl{F}}$.
\end{proposition}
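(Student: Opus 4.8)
The plan is to establish three things in turn: that $p^*$ is injective, that it carries $\cl{F}$-basic forms to $p^{-1}\cl{F}$-basic forms, and that it hits every $p^{-1}\cl{F}$-basic form. The first is immediate, the second is a short Cartan-calculus argument, and the third is where the real work lies. Throughout I would use the identification $T(p^{-1}\cl{F}) = (dp)^{-1}(\Delta)$, where $\Delta := T\cl{F}$, coming from Proposition \ref{prop:4} and Definition \ref{def:10}: a vector field $X$ on $M$ is tangent to $p^{-1}\cl{F}$ exactly when $dp\cdot X$ takes values in $\Delta$.

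Injectivity is free: since $p$ is a surjective submersion, each $dp_{x'}$ is surjective, so $p^*$ is injective on all of $\cplx{}{N}$, hence on basic forms. For well-definedness, let $\alpha$ be $\cl{F}$-basic and let $X$ be tangent to $p^{-1}\cl{F}$. Horizontality of $p^*\alpha$ follows from $(\iota_X p^*\alpha)_{x'}(\cdots) = \alpha_{p(x')}(dp_{x'}X_{x'},\cdots)$, which vanishes because $dp_{x'}X_{x'}\in\Delta_{p(x')}$ and $\alpha$ is horizontal. For invariance I would avoid lifting $X$ and instead write $\cl{L}_X p^*\alpha = d\,\iota_X p^*\alpha + \iota_X p^*(d\alpha)$, using $d\,p^*\alpha = p^*(d\alpha)$. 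The first term is zero by the horizontality just shown, and the second is zero because $d\alpha$ is again horizontal (for $W\in\Delta$, $\iota_W d\alpha = \cl{L}_W\alpha - d\,\iota_W\alpha = 0$), so the same horizontality argument applies. Thus $p^*\alpha$ is $p^{-1}\cl{F}$-basic.

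The main step is surjectivity. Let $\beta$ be $p^{-1}\cl{F}$-basic. Since $\ker dp_{x'}\subseteq (dp_{x'})^{-1}(\Delta)$, every $p$-vertical vector field is tangent to $p^{-1}\cl{F}$, so $\beta$ is horizontal and invariant with respect to all $p$-vertical fields. Because $p$ is a surjective submersion with connected fibers, the standard descent criterion produces a unique $\alpha\in\cplx{}{N}$ with $\beta = p^*\alpha$: locally $p$ is a coordinate projection, horizontality forces $\beta$ to involve only base-coordinate differentials, invariance forces its coefficients to be independent of the fiber coordinates, and connectedness of the fibers glues the local descents into a single global form. It then remains to see that $\alpha$ is $\cl{F}$-basic. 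For horizontality, given $W\in\Delta_y$ I would choose a vector field $Y$ tangent to $\cl{F}$ with $Y_y = W$ (possible as $\Delta$ is smooth), pick $x'\in p^{-1}(y)$ and $v\in T_{x'}M$ with $dp_{x'}(v)=W$, and apply Lemma \ref{lem:2} to get a field $X$ on $M$ that is $p$-related to $Y$ and hence tangent to $p^{-1}\cl{F}$. Then $p^*(\iota_Y\alpha) = \iota_X p^*\alpha = \iota_X\beta = 0$, so injectivity of $p^*$ gives $\iota_Y\alpha = 0$ and in particular $\iota_W\alpha = 0$. For invariance, the same local lift $X$ of a field $Y$ tangent to $\cl{F}$ satisfies $p^*(\cl{L}_Y\alpha) = \cl{L}_X p^*\alpha = \cl{L}_X\beta = 0$, the first equality holding because the flows of $p$-related fields intertwine under $p$; injectivity of $p^*$ then gives $\cl{L}_Y\alpha = 0$. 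Since these hold near each point and $W,Y$ are arbitrary, $\alpha$ is $\cl{F}$-basic, so $\beta = p^*\alpha$ lies in the image.

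The step I expect to be the main obstacle is the descent of $\beta$ to $\alpha$: checking that horizontality together with invariance under vertical fields, plus connectedness of fibers, genuinely yields a \emph{global} form on $N$ rather than merely local ones, and then confirming that this $\alpha$ inherits basicness. Lemma \ref{lem:2} is precisely the tool that bridges fields tangent to $\cl{F}$ with fields tangent to $p^{-1}\cl{F}$ and makes the final verification possible; connectedness of the fibers is used in an essential way and is exactly the hypothesis that distinguishes this from a naive pullback statement.
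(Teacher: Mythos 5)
Your proposal is correct and follows essentially the same route as the paper: injectivity from $p$ being a submersion, the Cartan-formula argument for well-definedness, descent of a $p^{-1}\cl{F}$-basic form along the fiber foliation of $p$ (the paper cites this descent as \cite[Exercise 14.9]{Lee2013} rather than sketching it), and then Lemma \ref{lem:2} to lift fields tangent to $\cl{F}$ and verify that the descended form is basic. The only differences are cosmetic, e.g.\ you deduce horizontality of $\alpha$ from $p^*(\iota_Y\alpha)=\iota_X\beta=0$ and injectivity, where the paper evaluates directly and uses surjectivity of $p_*$.
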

\begin{proof}
  Because $p$ is a submersion, $p^*$ is injective. It remains to show $p^*$ is into, and onto. First, let $\alpha$ be an $\cl{F}$-basic form on $N$, and take $X$ tangent to $p^{-1}\cl{F}$. Then
  \begin{equation*}
    (\iota_X p^*\alpha)_x = \alpha_{p(x)}(p_*X_x,\cdot) = (\iota_{p_*X_x}\alpha)_x = 0,
  \end{equation*}
  since $p_*X_x$ is tangent to $\cl{F}$. As $d\alpha$ is also $\cl{F}$-basic, by a similar computation we have $\iota_X(p^*d\alpha) = 0$, so by Cartan's formula,
  \begin{equation*}
    \cl{L}_X p^*\alpha = \iota_X(dp^*\alpha) = \iota_X(p^*d\alpha) = 0.
  \end{equation*}
  
  Second, let $\beta$ be a $p^{-1}\cl{F}$-basic form on $M$. Since $p$ is a surjective submersion with connected fibers, its fibers form a regular foliation $\cl{F}'$ of $M$. Any vector field tangent to these fibers is also tangent to $p^{-1}\cl{F}$, so the $p^{-1}\cl{F}$-basic forms are all $\cl{F}'$-basic. In particular, $\beta$ is $\cl{F}'$-basic. Now, we can identify $N \cong M/\cl{F}'$, and we can identify the quotient with $p:M \to N$. By {\cite[Exercise 14.9]{Lee2013}}, the pullback $p^*$ is an isomorphism from $\cplx{}{N}$ to $\cplx{b}{M, \cl{F}'}$. This provides $\alpha \in \cplx{}{N}$ such that $p^*\alpha = \beta$.

  We now check that $\alpha$ is $\cl{F}$-basic. Take $Y$ tangent to $\cl{F}$, and fix $x \in M$. By Lemma \ref{lem:2}, lift $Y$ to a $p$-related vector field $X$ about $x$, which is tangent to $p^{-1}\cl{F}$. Since $\beta$ is horizontal,
  \begin{equation*}
    0 = (p^*\alpha)_x(X,\cdot) = \alpha_{p(x)}(Y_{p(x)}, p_*\cdot).
  \end{equation*}
  But $p$ and $p_*$ are onto, so we conclude $\iota_Y\alpha = 0$, and $\alpha$ is horizontal. We also check that since $\beta$ is invariant,
  \begin{equation*}
    p^*(\cl{L}_Y\alpha) = L_Xp^*\alpha = 0.
  \end{equation*}
  But $p^*$ is injective, so $\cl{L}_Y\alpha = 0$, and $\alpha$ is invariant. This completes the proof.
\end{proof}
\begin{corollary}
  \label{cor:4}
  If two singular foliations are Molino transverse equivalent, then their complexes of basic forms, hence their basic cohomologies, are isomorphic. 
\end{corollary}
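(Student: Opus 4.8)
The plan is to deduce Corollary \ref{cor:4} directly from Proposition \ref{prop:10}, which has already done essentially all the work. The only task remaining is to assemble the pullback isomorphisms along the two legs of a Molino transverse equivalence into a single isomorphism between the two basic complexes.

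First I would unwind the definition: if $(N_0, \cl{F}_0)$ and $(N_1, \cl{F}_1)$ are Molino transverse equivalent, then by Definition \ref{def:11} there is a singular foliation $(M, \cl{F})$ together with surjective submersions with connected fibers $p_i : M \to N_i$ satisfying $p_i^{-1}(\cl{F}_i) = \cl{F}$ for $i = 0, 1$. Next I would apply Proposition \ref{prop:10} to each leg separately. Because $p_0 : M \to N_0$ is a surjective submersion with connected fibers and $p_0^{-1}(\cl{F}_0) = \cl{F}$, the pullback $p_0^*$ is an isomorphism of complexes from $\cplx{b}{N_0, \cl{F}_0}$ onto $\cplx{b}{M, \cl{F}}$. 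Identically, $p_1^*$ is an isomorphism from $\cplx{b}{N_1, \cl{F}_1}$ onto $\cplx{b}{M, \cl{F}}$, with the same target.

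I would then compose: the map $(p_1^*)^{-1} \circ p_0^*$ is an isomorphism of cochain complexes from $\cplx{b}{N_0, \cl{F}_0}$ to $\cplx{b}{N_1, \cl{F}_1}$. Since pullback by a smooth map is a cochain map (it commutes with the de Rham differential $d$), and both $p_0^*$ and $p_1^*$ are isomorphisms of complexes by Proposition \ref{prop:10}, their composite is an isomorphism of complexes as well. Passing to cohomology, which is a functor on cochain complexes, yields the induced isomorphism $H_b^\bullet(N_0, \cl{F}_0) \cong H_b^\bullet(N_1, \cl{F}_1)$ on basic cohomologies. This is the statement to be proved.

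Honestly, I do not expect any genuine obstacle here: Proposition \ref{prop:10} is precisely the substantive input, and the corollary is a formal consequence. The only point requiring a word of care is that Definition \ref{def:11} gives a \emph{single} intermediate $(M, \cl{F})$ with both submersions landing in it, so the two applications of Proposition \ref{prop:10} share the same target complex $\cplx{b}{M, \cl{F}}$ — this is what makes the composition $(p_1^*)^{-1} \circ p_0^*$ well-defined without needing to chase any further diagram. If one instead wanted to phrase the argument at the level of the leaf-space diffeomorphism from Proposition \ref{prop:6}, that would be a detour; the cleanest route is the direct composition of the two pullback isomorphisms just described.
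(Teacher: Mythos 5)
Your proposal is correct and is exactly the argument the paper intends: the corollary is stated without proof as an immediate consequence of Proposition \ref{prop:10}, obtained by applying that proposition to both legs $p_0, p_1$ of the equivalence and composing $(p_1^*)^{-1} \circ p_0^*$. Nothing further is needed.
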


Diffeology provides an alternative approach to using differential forms to study the transverse geometry of a singular foliation. Every diffeological space $X$ comes with a de Rham complex of diffeological differential forms, $\cplx{}{X}$. When $X$ is a leaf space of a singular foliation $(N, \cl{F})$, we can directly compare $\cplx{}{N/\cl{F}}$ and $\cplx{b}{N, \cl{F}}$.

\begin{theorem}
  \label{thm:6}
  Fix a singular foliation $(N, \cl{F})$. The quotient map $\pi:N \to N/\cl{F}$ induces a map of chain complexes $\pi^*:\cplx{}{N/\cl{F}} \to \cplx{}{N}$. This is injective, and its image is contained in the space of $\cl{F}$-basic forms. The pullback $\pi^*$ is an isomorphism $\cplx{}{N/\cl{F}} \to \cplx{b}{N, \cl{F}}$ whenever
  \begin{itemize}
  \item the set of points in leaves of dimension $k$ is a diffeological submanifold of $N$, for each $k$, or
    \item $\pi^*$ is an isomorphism when we replace $(N, \cl{F})$ with $N_{>0}$ ($N$ with the zero-leaves excised) and the induced singular foliation $\cl{F}_{>0}$.
  \end{itemize}
\end{theorem}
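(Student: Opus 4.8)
The plan is to handle the three assertions in turn, with the surjectivity of $\pi^*$ onto the basic forms carrying all the real content. For injectivity, I would use that the quotient map $\pi:N \to N/\cl{F}$ is a subduction, so every plot of $N/\cl{F}$ lifts locally through $\pi$; a diffeological form on $N/\cl{F}$ is then determined by its pullbacks along such lifts, hence by $\pi^*$, which gives injectivity. For the claim that $\operatorname{im}\pi^*$ consists of basic forms, the key observation is that a vector field $X$ tangent to $\cl{F}$ has a local flow preserving each leaf, so $\pi \circ \Phi^X_t = \pi$ for all $t$. Writing $G(t,x) = \Phi^X_t(x)$, this reads $\pi \circ G = \pi \circ \pr_N$, so $G^*\pi^*\alpha = \pr_N^*\pi^*\alpha$; the right-hand side has no $dt$-component and its restriction to each slice $\{t\}\times N$ is independent of $t$. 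Differentiating at $t=0$ (where $G(0,\cdot)=\operatorname{id}$ and $\partial_t G = X$) yields $\iota_X\pi^*\alpha = 0$ and $\cl{L}_X\pi^*\alpha = 0$, i.e.\ $\pi^*\alpha$ is $\cl{F}$-basic.

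For surjectivity I would start from a basic form $\beta \in \cplx{b}{N,\cl{F}}$ and define a candidate $\alpha$ on $N/\cl{F}$ by the local rule $\alpha(p)|_V := q^*\beta$, where $q:V \to N$ is any local lift of a plot $p$ of $N/\cl{F}$. Granting that this is independent of the chosen lift, smoothness and naturality of $\alpha$ follow from the chain rule, and $\pi^*\alpha = \beta$ follows by taking $q$ to be a chart of $N$ itself. Everything thus reduces to the descent lemma: if $q_0,q_1:V \to N$ satisfy $\pi q_0 = \pi q_1$ (so $q_0(u),q_1(u)$ lie in a common leaf for each $u$), then $q_0^*\beta = q_1^*\beta$. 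I would prove this by producing a leaf-preserving homotopy $H:[0,1]\times V \to N$ with $H_0 = q_0$, $H_1 = q_1$ and $\pi \circ H_s$ constant in $s$. Writing $H_s := H(s,\cdot)$ and $j_s(u):=(s,u)$, the velocity $\partial_s H$ is everywhere tangent to $\cl{F}$; since $\beta$ is horizontal, $\iota_{\partial_s}H^*\beta = 0$, and since $d\beta$ is horizontal too (automatic from basicness, as $\iota_X d\beta = \cl{L}_X\beta - d\iota_X\beta = 0$), the homotopy formula gives $\frac{d}{ds}H_s^*\beta = j_s^*\iota_{\partial_s}H^*d\beta = 0$, whence $q_0^*\beta = q_1^*\beta$.

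The main obstacle is exactly the construction of this leaf-preserving homotopy, and this is where the two hypotheses enter. In the Stefan chart of Definition \ref{def:3} a lift reads $\psi q_i(u) = (a_i(u), b_i(u))$ with $b_i(u)$ lying in the trace $\ell(u)\subseteq W$ of the common leaf; interpolating the $U$-coordinate $a_i$ is harmless, but connecting $b_0(u)$ to $b_1(u)$ inside the $u$-dependent, possibly dimension-jumping set $\ell(u)$ is the difficulty. Under the first hypothesis I would stratify $N = \bigsqcup_k N_k$ by leaf dimension; because $q_0(u)$ and $q_1(u)$ always lie in the \emph{same} stratum, and each $N_k$ is a submanifold on which $\cl{F}$ restricts to a regular foliation, the local product structure of the regular case (as in Proposition \ref{prop:10}) builds the homotopy within each stratum, and the submanifold hypothesis is precisely what allows these pieces to be assembled smoothly across strata.

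The second hypothesis instead effects a clean reduction. Restricting $\beta$ to the open set $N_{>0}$ (open since leaf dimension is lower semicontinuous) gives an $\cl{F}_{>0}$-basic form that descends by assumption, so for any lift $q$ with image in $N_{>0}$ one has $q^*\beta = p^*\alpha_{>0}$, independent of $q$; hence $q_0^*\beta = q_1^*\beta$ on the open set $U_{>0}:=\{r: p(r)\text{ is a positive-dimensional leaf}\}$. On the interior of its complement the leaves $p(r)$ are single points, so any two lifts coincide \emph{as maps} there and their pullbacks agree; since $U_{>0}$ is dense along the boundary, continuity of $q_0^*\beta$ and $q_1^*\beta$ reconciles the two regions, giving the descent lemma and hence $\pi^*\alpha=\beta$ on all of $N$. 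I expect the smooth assembly across strata under the first hypothesis to be the most delicate point, whereas the reduction under the second is essentially formal once one notices that lifts are forced to agree over the zero-dimensional leaves.
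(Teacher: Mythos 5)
First, a caveat: the paper does not actually prove Theorem \ref{thm:6}; it quotes it as the main result of \cite{Miyamoto2022}, so there is no in-paper argument to compare against and your proposal must be judged on its own terms. On those terms, much of it is right. Injectivity via the subduction property, the flow argument showing $\operatorname{im}\pi^*\subseteq\cplx{b}{N,\cl{F}}$, the reduction of surjectivity to the descent criterion ($\pi\circ q_0=\pi\circ q_1\Rightarrow q_0^*\beta=q_1^*\beta$), and the homotopy-formula computation showing that a smooth leaf-preserving homotopy between lifts forces $q_0^*\beta=q_1^*\beta$ are all correct. Your treatment of the \emph{second} bullet is also essentially complete: $N_{>0}$ is open by lower semicontinuity of leaf dimension, the forms agree on $U_{>0}$ by hypothesis, on the interior of its complement because the two lifts literally coincide there as maps (so have equal derivatives on a neighbourhood), and on the residual boundary by continuity of $q_0^*\beta-q_1^*\beta$.

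The genuine gap is the \emph{first} bullet, which is the main content of the theorem. You never construct the leaf-preserving homotopy there; the sentences ``the local product structure of the regular case builds the homotopy within each stratum'' and the submanifold hypothesis ``allows these pieces to be assembled smoothly across strata'' are assertions, not arguments, and each hides a real difficulty. Even within one stratum (i.e.\ for a regular foliation), two lifts $q_0(u),q_1(u)$ of the same plot lie in the same leaf but typically in distant plaques, so no single Stefan chart connects them; the regular case is normally handled not by exhibiting a global homotopy but by holonomy transport along a chain of charts plus the holonomy-invariance of basic forms, which is itself a lemma you would need to state and prove. Across strata the situation is worse: the sets $q_i^{-1}(N_k)$ are not open (only the union over dimensions $\geq k$ is), so there is no open cover over which to glue stratum-wise homotopies, and the required joint smoothness of $H(s,u)$ at parameters $u$ where the leaf dimension jumps is exactly the hard point --- indeed it is not clear such a homotopy exists at all, and the proof in \cite{Miyamoto2022} does not proceed by producing one. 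As written, your proposal establishes the two easy assertions and the second sufficient condition, but not the first.
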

This is the main result of \cite{Miyamoto2022}. It implies the earlier results from Karshon and Watts \cite{Karshon2016} and Watts \cite{Watts2022}, in the case of connected groups and source-connected groupoids, respectively. For regular foliations, this was proved by Hector, Marc\'{i}as-Virg\'{o}s, and Sanmart\'{i}n-Carb\'{o}n in \cite{Hector2011}. The following question remains open:
\begin{question}
  Is $\pi^*:\cplx{}{N/\cl{F}} \to \cplx{b}{N, \cl{F}}$ always an isomorphism?
\end{question}
However, Corollary \ref{cor:4} lets us show $\pi^*$ is an isomorphism for an \emph{a priori} larger class of singular foliations than Theorem \ref{thm:6}.

\begin{proposition}
  \label{prop:11}
  Suppose $(N_i, \cl{F}_i)$ are Molino transverse equivalent singular foliations, and assume that $\pi_i:N_i \to N_i/\cl{F}_i$ induces an isomorphism $\pi_i^*:\cplx{}{N_i/\cl{F}_i} \to \cplx{b}{N_i, \cl{F}_i}$ for $i=0$. Then the same holds for $i=1$.
\end{proposition}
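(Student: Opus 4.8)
The plan is to reduce both hypotheses to a single statement about the leaf space of the intermediate foliation witnessing the equivalence, and then to chase a commutative square of pullback maps. By Definition \ref{def:11}, fix a singular foliation $(M, \cl{F})$ together with surjective submersions with connected fibers $p_i : M \to N_i$ satisfying $p_i^{-1}(\cl{F}_i) = \cl{F}$, and write $\pi_M : M \to M/\cl{F}$ for the quotient map. Since $p_i$ carries the leaf $p_i^{-1}(L)$ of $\cl{F}$ onto the leaf $L$ of $\cl{F}_i$, it descends to a map $\bar p_i : M/\cl{F} \to N_i/\cl{F}_i$, and this descended map is precisely the inverse of the diffeomorphism $\varphi_i$ of Proposition \ref{prop:6}; in particular each $\bar p_i$ is a diffeological diffeomorphism. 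At the level of spaces this gives the commuting square $\pi_i \circ p_i = \bar p_i \circ \pi_M$.

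First I would pull this square back to differential forms, obtaining the identity $p_i^* \circ \pi_i^* = \pi_M^* \circ \bar p_i^*$ of maps $\cplx{}{N_i/\cl{F}_i} \to \cplx{b}{M, \cl{F}}$. By Theorem \ref{thm:6}, each $\pi_i^*$ is injective with image in the basic forms, so the assertion to be proved for a given $i$ is exactly that $\pi_i^*$ is \emph{onto} $\cplx{b}{N_i, \cl{F}_i}$. The two outer maps in the square are isomorphisms for free: $\bar p_i^*$ because $\bar p_i$ is a diffeomorphism, and $p_i^*$ because Proposition \ref{prop:10} identifies $p_i^* : \cplx{b}{N_i, \cl{F}_i} \to \cplx{b}{M, \cl{F}}$ as an isomorphism (here using $p_i^{-1}\cl{F}_i = \cl{F}$). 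Solving the identity yields $\pi_i^* = (p_i^*)^{-1} \circ \pi_M^* \circ \bar p_i^*$, so $\pi_i^*$ is an isomorphism if and only if $\pi_M^*$ is.

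The conclusion is then immediate. Applying this equivalence with $i = 0$, the hypothesis that $\pi_0^*$ is an isomorphism forces $\pi_M^*$ to be an isomorphism; applying it again with $i = 1$ shows $\pi_1^*$ is an isomorphism, as desired. I do not expect a genuine obstacle here, since the content lies entirely in organizing the two pullback towers around the shared middle term $\pi_M^*$, and there is no analytic estimate to make. The one point that needs care is verifying that $\bar p_i$ really is the diffeomorphism $\varphi_i^{-1}$ of Proposition \ref{prop:6} and that all four maps in the square are diffeologically smooth, so that pullback is functorial and the square commutes on forms; this is routine from the description of the quotient diffeology.
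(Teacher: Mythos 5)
Your proof is correct and is essentially the paper's argument: the paper chases the same commutative pentagon (built from Propositions \ref{prop:6} and \ref{prop:10}) with $\varphi^*$, $\pi_0^*$, $p_0^*$, $p_1^*$ all isomorphisms, forcing $\pi_1^*$ to be one. Your detour through $M/\cl{F}$ and $\pi_M$ merely splits that pentagon into two squares glued along $\pi_M$ (since $\varphi = \bar p_1 \circ \bar p_0^{-1}$), so it is a harmless refactoring rather than a different route.
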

\begin{proof}
  Using Proposition \ref{prop:6}, create the following commutative pentagon.
  \begin{equation*}
    \begin{tikzcd}
      & M \ar[dl, "p_0"'] \ar[dr, "p_1"] & \\
      N_0 \ar[d, "\pi_0"'] & & N_1 \ar[d, "\pi_1"] \\
      N_0/\cl{F}_0 \ar[rr, "\varphi"] & & N_1/\cl{F}_1.
    \end{tikzcd}
  \end{equation*}
  All of $\varphi^*$, $\pi_0^*$, $p_0^*$, and $p_1^*$ are isomorphisms of the relevant complexes. Thus $\pi_1^*$ is too.
\end{proof}

We could also make the same statement in cohomology.
\appendix

\section{Review of Lie groupoids}
\label{sec:review-lie-groupoids}

Here we establish some necessary facts and terminology about Lie groupoids. For details, see \cite{Moerdijk2003} or \cite{Lerman2010}. A \define{Lie groupoid} is a category $\cl{G} \rra M$, whose arrows are all invertible, such that: $M$, called the \define{base}, is a manifold, $\cl{G}$ has a smooth structure (but is not necessarily Hausdorff or second-countable), all the structure maps are smooth, and the source $s$ (hence the target $t$) is a submersion with Hausdorff fibers. We typically denote an arrow $g$ with source $x$ and target $x'$ by $g:x \to x'$. The relation $x \sim x'$ if there is an arrow $g:X \to x'$ is an equivalence relation, whose classes we call \define{orbits}.

\begin{example}
  \label{ex:12}
  If $G$ is a Lie group acting smoothly on a manifold $M$, we can form the \define{action groupoid} $G \ltimes M$, with arrow space $G \times M$, base space $M$, source $(g,x) \mapsto x$, target $(g,x) \mapsto g \cdot x$, and multiplication $(g',x')(g,x) = (g'g,x)$. In this way, Lie groupoids subsume Lie group actions.
  \eoe
\end{example}

If the fibers of the source map $s$ are connected, we say $\cl{G}$ is \define{source-connected}. When the arrow space $\cl{G}$ is Hausdorff, we call $\cl{G}$ itself Hausdorff. If $\cl{G}$ is Hausdorff, and the map $(s,t):\cl{G} \to M \times M$ is proper (i.e. pre-images of compact sets are compact), we call $\cl{G}$ \define{proper}.

Given an immersed submanifold $\iota: (N, \sigma_N) \to M$, if $(t,s):\cl{G} \to M \times M$ is transverse to $(\iota, \iota): N \to M\times M$, then we can form the \define{pullback} of $\cl{G}$ along $\iota$, and obtain $\iota^*\cl{G} \rra N$, whose arrows $y \to y'$ are those arrows in $\cl{G}$ from $\iota(y) \to \iota(y')$. If $U$ is an open subset of $M$, we denote the pullback of $\cl{G}$ to $U$ by $\cl{G}|_U \rra U$. Its arrows are the arrows of $\cl{G}$ with source and target in $U$.

We call a Lie groupoid \define{\'{e}tale} if its arrow space and base space have the same dimension. In this case, the source and target are local diffeomorphisms. If the map
\begin{equation*}
  g \mapsto \operatorname{germ}_{s(g)}(t \circ s^{-1}),
\end{equation*}
where $s^{-1}$ is the local inverse of $s$ mapping $s(g)$ to $g$, is injective, we call the \'{e}tale groupoid $\cl{G}$ \define{effective}.

\begin{definition}
  \label{def:14}
  A \define{left action} of a Lie groupoid $\cl{G} \rra M$ on a manifold $P$ consists of a \define{multiplication} $\mu:\cl{G} \fiber{s}{a} P \to P$, which we usually denote by $\cdot$, and an \define{anchor} $a:P \to M$, such that the following diagram commutes:
  \begin{equation*}
  \begin{tikzcd}
      \cl{G} \fiber{s}{a} P \ar[r, "\mu"] \ar[d, "\pr_1"] & P \ar[d, "a"] \\
      \cl{G} \ar[r, "t"] & M,
    \end{tikzcd}
  \end{equation*}
  and furthermore
  \begin{itemize}
  \item $g' \cdot (g \cdot p) = g'g \cdot p$ whenever this makes sense, and
    \item $1_{a(p)}\cdot p = p$ for all $p \in P$.
    \end{itemize}
    \eod
\end{definition}
We can similarly define a right action.
\begin{definition}
  \label{def:15}
  An \define{invertible bibundle} from $\cl{G}$ to $\cl{H}$, denoted $P:\cl{G} \to \cl{H}$, is a (not-necessarily Hausdorff) manifold $P$ on which $\cl{G}$ acts from the left (along anchor $a$) and $\cl{H}$ acts on the right (along anchor $a'$), such that the anchors are surjective submersions, the actions commute, and
  \begin{itemize}
  \item $a$ is $\cl{H}$-invariant, and $a'$ is $\cl{G}$-invariant, and
  \item the maps
    \begin{equation*}
      \cl{G} \fiber{s}{a} P \to P \fiber{a'}{a'} P, \quad (g,p) \mapsto (g\cdot p, p),
    \end{equation*}
    and
    \begin{equation*}
      P \fiber{a'}{t} \cl{H} \to P \fiber{a}{a} P, \quad (p,h) \mapsto (p, p\cdot h)
    \end{equation*}
    are diffeomorphisms. Equivalently, $\cl{G}$ and $\cl{H}$ act freely and transitively on the fibers of $a'$ and $a$, respectively
  \end{itemize}

  We say $\cl{G}$ and $\cl{H}$ are \define{Morita equivalent} if there is an invertible bibundle between them.
 \eod
\end{definition}

\begin{lemma}
  \label{lem:3}
  If two Hausdorff Lie groupoids are Morita equivalent, there is a Hausdorff invertible bibundle between them.
\end{lemma}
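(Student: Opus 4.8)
The plan is to show that \emph{every} invertible bibundle between Hausdorff Lie groupoids is already Hausdorff; since Morita equivalence supplies at least one invertible bibundle $P:\cl{G}\to\cl{H}$, this proves the lemma outright (with no need to modify $P$). So I would fix such a $P$, with anchors $a:P\to M$ and $a':P\to N$, and argue that $a$ realizes $P$ as a locally trivial principal $\cl{H}$-bundle over the manifold $M$, whose local model is manifestly Hausdorff. The key structural facts, both from Definition \ref{def:15}, are that $a$ is a surjective submersion and that $\cl{H}$ acts freely and transitively on the fibers of $a$.

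First I would use that $a$, being a surjective submersion, admits a smooth local section $\sigma:U\to P$ over some open $U\subseteq M$ around any given point of $M$. Using $\sigma$ I would build the candidate trivialization
\begin{equation*}
  \Phi:U\fiber{a'\circ\sigma}{t}\cl{H}\to a^{-1}(U),\qquad (x,h)\mapsto \sigma(x)\cdot h,
\end{equation*}
and check it is a diffeomorphism. It is a bijection precisely because $\cl{H}$ acts freely and transitively on each fiber of $a$, so every point of $a^{-1}(x)$ is $\sigma(x)\cdot h$ for a unique $h$. Its inverse sends $p$ to $(a(p),h)$, where $h$ is the unique arrow with $\sigma(a(p))\cdot h=p$; this ``division'' is smooth because it is obtained from the inverse of the diffeomorphism $P\fiber{a'}{t}\cl{H}\to P\fiber{a}{a}P$ of Definition \ref{def:15}. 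Since $U$, $N$, and $\cl{H}$ are all Hausdorff, the fibered product $U\fiber{a'\circ\sigma}{t}\cl{H}$ is Hausdorff, and hence so is each open piece $a^{-1}(U)$.

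These pieces $a^{-1}(U)$ cover $P$, so it remains only to separate two distinct points $p,p'\in P$. If $a(p)=a(p')$ they lie in a common Hausdorff piece $a^{-1}(U)$ and are separated there; if $a(p)\neq a(p')$ I would separate $a(p)$ and $a(p')$ by disjoint opens in the Hausdorff manifold $M$ and pull these back along $a$. Either way $p$ and $p'$ are separated, so $P$ is Hausdorff.

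The globalization in the last paragraph is routine, so the main obstacle is the local triviality step — specifically, verifying that the candidate inverse of $\Phi$ is smooth. This is exactly where the Hausdorffness of $\cl{H}$ is consumed (through the Hausdorff local model $U\fiber{a'\circ\sigma}{t}\cl{H}$) and where one must invoke the bibundle axioms rather than merely the fact that $a$ is a submersion with nice fibers; I would take care to derive smoothness of the division map directly from the stated diffeomorphism $P\fiber{a'}{t}\cl{H}\to P\fiber{a}{a}P$. As a sanity check, I note that running the symmetric argument with $a'$ forces $\cl{G}$ to be Hausdorff as well, in agreement with Hausdorffness being a Morita invariant.
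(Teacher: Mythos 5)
Your argument is correct, and it in fact establishes more than the lemma asks: every invertible bibundle between Hausdorff Lie groupoids is automatically Hausdorff, so no replacement of $P$ is needed. This is necessarily a different route from the paper, which does not prove Lemma \ref{lem:3} at all but defers to {\cite[Corollary A.7]{Garmendia2019}}; your proof is self-contained and uses only Definition \ref{def:15}. Both pillars check out: (i) the local trivialization $\Phi:U\fiber{a'\circ\sigma}{t}\cl{H}\to a^{-1}(U)$ is a diffeomorphism --- bijective because $\cl{H}$ acts freely and transitively on the fibers of $a$, with smooth inverse obtained from the division map coming from the stated diffeomorphism $P\fiber{a'}{t}\cl{H}\to P\fiber{a}{a}P$ --- and the local model is cut out of $U\times\cl{H}$ transversally (since $t$ is a submersion), hence is an embedded submanifold carrying the Hausdorff subspace topology; (ii) the globalization correctly uses Hausdorffness of the base $M$ to separate points in distinct fibers, which is exactly where a locally Hausdorff space could otherwise fail to be Hausdorff. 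Two minor points are worth making explicit: local sections of $a$ through any prescribed point of $P$ exist even though $P$ is a priori non-Hausdorff, because the normal form for submersions is purely local; and your closing sanity check is slightly miسstated, since the symmetric argument with $a'$ re-derives Hausdorffness of $P$ from that of $\cl{G}$ and $N$ rather than deducing Hausdorffness of $\cl{G}$. Neither affects the validity of the proof, which could replace the external citation.
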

For a proof, see {\cite[Corollary A.7]{Garmendia2019}}.

\begin{example}
  \label{ex:13}
  Given a regular foliation $(M, \cl{F})$, we can construct the \define{holonomy groupoid} $\Hol(\cl{F}) \rra M$. There is an arrow $x \to y$ if and only if $x$ and $y$ are in the same leaf $L$; in this case, each arrow is a \emph{holonomy class} of a path from $x$ to $y$ within $L$. Two paths are in the same holonomy class if they induce the same diffeomorphism between small transversal slices (submanifolds which intersect the leaves transversely)  at $x$ and $y$. Paths in the same homotopy class are in the same holonomy class.

  The holonomy groupoid is source-connected. Given a complete transversal $T$ to $\cl{F}$, i.e.\ an immersed submanifold $(T, \sigma_T)$ which intersects each leaf of $\cl{F}$ at least once, and always intersects leaves transversely, we can pull back the holonomy groupoid to $T$, obtaining $\Hol(\cl{F})|_T \rra T$. This is an \'{e}tale groupoid. Different choices of complete transversals $T$ yield Morita equivalent pullback groupoids, hence we call $\Hol(\cl{F})|_T \rra T$ ``the'' \emph{\'{e}tale holonomy groupoid} of $\cl{F}$. This groupoid is always effective.
  \eoe
\end{example}

\begin{definition}
  A \define{Lie Algebroid} is a vector bundle $A \to M$, together with a Lie bracket on the space of sections of $A$, and a bundle map $a:A \to TM$, the \define{anchor}, satisfying a Leibniz rule with respect to the algebroid bracket. We denote a Lie algebroid by $A \implies M$.
  \eod
\end{definition}
To each Lie groupoid $\cl{G} \rra M$, we can associate a Lie algebroid $\operatorname{Lie}(\cl{G}) \to M$. The maximal integral submanifolds of the distribution $a(\operatorname{Lie}(\cl{G}))$ are the connected components of the orbits of $\cl{G}$.

\begin{bibdiv}
  \begin{biblist}
    \bib{Androulidakis2009}{article}{
      author={Androulidakis, Iakovos},
      author={Skandalis, Georges},
      title={The holonomy groupoid of a singular foliation},
      journal={J. Reine Angew. Math.},
      volume={626},
      date={2009},
      pages={1--37},
      issn={0075-4102},
      review={\MR{2492988}},
      doi={10.1515/CRELLE.2009.001},
    }

    \bib{Androulidakis2013}{article}{
      author={Androulidakis, Iakovos},
      author={Zambon, Marco},
      title={Smoothness of holonomy covers for singular foliations and essential isotropy},
      journal={Math. Z.},
      volume={275},
      date={2013},
      number={3-4},
      pages={921--951},
      issn={0025-5874},
      review={\MR{3127043}},
      doi={10.1007/s00209-013-1166-5},
    }

    \bib{Clebsch1866}{article}{
   author={Clebsch, Alfred},
   title={Ueber die simultane Integration linearer partieller Differentialgleichungen},
   language={German},
   journal={J. Reine Angew. Math.},
   volume={65},
   date={1866},
   pages={257--268},
   issn={0075-4102},
   review={\MR{1579322}},
   doi={10.1515/crll.1866.65.257},
}
    
    \bib{Crainic2013}{article}{
      author={Crainic, Marius},
      author={Struchiner, Ivan},
      title={On the linearization theorem for proper Lie groupoids},
      language={English, with English and French summaries},
      journal={Ann. Sci. \'{E}c. Norm. Sup\'{e}r. (4)},
      volume={46},
      date={2013},
      number={5},
      pages={723--746},
      issn={0012-9593},
      review={\MR{3185351}},
      doi={10.24033/asens.2200},
    }

    \bib{Deahna1840}{article}{
   author={Deahna, Feodor},
   title={Ueber die Bedingungen der Integrabilit\"{a}t line\"{a}rer Differentialgleichungen erster Ordnung zwischen einer beliebigen Anzahl ver\"{a}nderlicher Gr\"{o}\ss en},
   language={German},
   journal={J. Reine Angew. Math.},
   volume={20},
   date={1840},
   pages={340--349},
   issn={0075-4102},
   review={\MR{1578247}},
   doi={10.1515/crll.1840.20.340},
}

    \bib{Donato1985}{article}{
      author={Donato, Paul},
      author={Iglesias, Patrick},
      title={Exemples de groupes diff\'{e}ologiques: flots irrationnels sur le tore},
      language={French, with English summary},
      journal={C. R. Acad. Sci. Paris S\'{e}r. I Math.},
      volume={301},
      date={1985},
      number={4},
      pages={127--130},
      issn={0249-6291},
      review={\MR{799609}},
    }

    \bib{Drager2012}{article}{
      author={Drager, Lance D.},
      author={Lee, Jeffrey M.},
      author={Park, Efton},
      author={Richardson, Ken},
      title={Smooth distributions are finitely generated},
      journal={Ann. Global Anal. Geom.},
      volume={41},
      date={2012},
      number={3},
      pages={357--369},
      issn={0232-704X},
      review={\MR{2886203}},
      doi={10.1007/s10455-011-9287-8},
    }

    \bib{Frobenius1877}{article}{
      author={Frobenius, Georg},
      title={Ueber das Pfaffsche Problem},
      language={German},
      journal={J. Reine Angew. Math.},
      volume={82},
      date={1877},
      pages={230--315},
      issn={0075-4102},
      review={\MR{1579710}},
      doi={10.1515/crll.1877.82.230},
    }
    
    \bib{Garmendia2019}{article}{
      author={Garmendia, Alfonso},
      author={Zambon, Marco},
      title={Hausdorff Morita equivalence of singular foliations},
      journal={Ann. Global Anal. Geom.},
      volume={55},
      date={2019},
      number={1},
      pages={99--132},
      issn={0232-704X},
      review={\MR{3916125}},
      doi={10.1007/s10455-018-9620-6},
    }

    \bib{Hawkins2005}{article}{
      author={Hawkins, Thomas},
      title={Frobenius, Cartan, and the problem of Pfaff},
      journal={Arch. Hist. Exact Sci.},
      volume={59},
      date={2005},
      number={4},
      pages={381--436},
      issn={0003-9519},
      review={\MR{2188939}},
      doi={10.1007/s00407-004-0095-4}
    }
    
    \bib{Hector2011}{article}{
      author={Hector, Gilbert},
      author={Mac\'{\i}as-Virg\'{o}s, Enrique},
      author={Sanmart\'{\i}n-Carb\'{o}n, Esperanza},
      title={De Rham cohomology of diffeological spaces and foliations},
      journal={Indag. Math. (N.S.)},
      volume={21},
      date={2011},
      number={3-4},
      pages={212--220},
      issn={0019-3577},
      review={\MR{2835929}},
      doi={10.1016/j.indag.2011.04.004},
    }
    
    \bib{Hermann1962}{article}{
      title = {The differential geometry of foliations, II},
      author = {Hermann, Robert},
      journal = {J. Math. Mech},
      volume = {11},
      issue = {2},
      date = {1962},
      pages = {303--315},
      review = {\MR{0142131}}
    }

    \bib{Iglesias-Zemmour2013}{book}{
      author={Iglesias-Zemmour, Patrick},
      title={Diffeology},
      series={Mathematical Surveys and Monographs},
      volume={185},
      publisher={American Mathematical Society, Providence, RI},
      date={2013},
      pages={xxiv+439},
      isbn={978-0-8218-9131-5},
      review={\MR{3025051}},
      doi={10.1090/surv/185},
    }

    \bib{Iglesias-Zemmour2018}{article}{
      author={Iglesias-Zemmour, Patrick},
      author={Laffineur, Jean-Pierre},
      title={Noncommutative geometry and diffeology: the case of orbifolds},
      journal={J. Noncommut. Geom.},
      volume={12},
      date={2018},
      number={4},
      pages={1551--1572},
      issn={1661-6952},
      review={\MR{3896235}},
      doi={10.4171/JNCG/319},
    }

    \bib{Jacobi1827}{article}{
      author={Jacobi, Carl G. J.},
      title={Ueber die Integration der partiellen Differentialgleichungen erster Ordnung},
      language={German},
      journal={J. Reine Angew. Math.},
      volume={2},
      date={1827},
      pages={317--329},
      issn={0075-4102},
      review={\MR{1577659}},
      doi={10.1515/crll.1827.2.317},
    }
    
    \bib{Joris1982}{article}{
      author={Joris, Henri},
      title={Une ${\mathcal{C}}^{\infty }$-application non-immersive qui poss\'{e}de la propri\'{e}t\'{e} universelle des immersions},
      language={French},
      journal={Arch. Math. (Basel)},
      volume={39},
      date={1982},
      number={3},
      pages={269--277},
      issn={0003-889X},
      review={\MR{682456}},
      doi={10.1007/BF01899535},
    }

    \bib{Karshon2016}{article}{
      author={Karshon, Yael},
      author={Watts, Jordan},
      title={Basic forms and orbit spaces: a diffeological approach},
      journal={SIGMA Symmetry Integrability Geom. Methods Appl.},
      volume={12},
      date={2016},
      pages={Paper No. 026, 19},
      review={\MR{3470746}},
      doi={10.3842/SIGMA.2016.026},
    }
    
    \bib{Karshon2022}{misc}{
      title={Quasifold groupoids and diffeological quasifolds},
      author={Karhson, Yael},
      author={Miyamoto, David},
      publisher={preprint},
      date={2022},
      note={https://arxiv.org/abs/2206.14776}
    }

    \bib{Karshon2022b}{misc}{
      title={Diffeological submanifolds and their friends},
      author={Karshon, Yael},
      author={Miyamoto, David},
      author={Watts, Jordan},
      publisher={preprint},
      date={2022},
      note={https://arxiv.org/abs/2204.10381}
    }

    \bib{Kubarski1990}{article}{
      author={Kubarski, Jan},
      title={About Stefan's definition of a foliation with singularities: A reduction of the axioms},
      language={English, with French summary},
      journal={Bull. Soc. Math. France},
      volume={118},
      date={1990},
      number={4},
      pages={391--394},
      issn={0037-9484},
      review={\MR{1090406}},
    }

    \bib{Lavau2018}{article}{
      author={Lavau, Sylvain},
      title={A short guide through integration theorems of generalized distributions},
      journal={Differential Geom. Appl.},
      volume={61},
      date={2018},
      pages={42--58},
      issn={0926-2245},
      review={\MR{3856749}},
      doi={10.1016/j.difgeo.2018.07.005},
    }
    \bib{Lee2013}{book}{
      author={Lee, John M.},
      title={Introduction to smooth manifolds},
      series={Graduate Texts in Mathematics},
      volume={218},
      edition={2},
      publisher={Springer, New York},
      date={2013},
      pages={xvi+708},
      isbn={978-1-4419-9981-8},
      review={\MR{2954043}},
    }
    \bib{Lerman2010}{article}{
      author={Lerman, Eugene},
      title={Orbifolds as stacks?},
      journal={Enseign. Math. (2)},
      volume={56},
      date={2010},
      number={3-4},
      pages={315--363},
      issn={0013-8584},
      review={\MR{2778793}},
      doi={10.4171/LEM/56-3-4},
    }
    
    \bib{Lobry1970}{article}{
      author={Lobry, Claude},
      title={Contr\^{o}labilit\'{e} des syst\`emes non lin\'{e}aires},
      language={French},
      journal={SIAM J. Control},
      volume={8},
      date={1970},
      pages={573--605},
      issn={0363-0129},
      review={\MR{0271979}},
    }

    \bib{Miyamoto2022}{article}{
      author={Miyamoto, David},
      title={The basic de Rham complex of a singular foliation},
      journal={Int. Math. Res. Not. IMRN},
      date={2022},
      issn = {1073-7928},
      doi = {10.1093/imrn/rnac044}
    }

    \bib{Matsuda1968}{article}{
      title = {An integration theorem for completely integrable systems with singularities},
      author = {Matsuda, Michihiko},
      journal = {Osaka Math. J.},
      volume = {5},
      date = {1968},
      pages = {279--283},
      issn={0388-0699},
      review = {\MR{243555}}
    }

    \bib{Moerdijk2003}{book}{
      author={Moerdijk, Ieke},
      author={Mr\v{c}un, Janez},
      title={Introduction to foliations and Lie groupoids},
      series={Cambridge Studies in Advanced Mathematics},
      volume={91},
      publisher={Cambridge University Press, Cambridge},
      date={2003},
      pages={x+173},
      isbn={0-521-83197-0},
      review={\MR{2012261}},
      doi={10.1017/CBO9780511615450},
    }
    
    \bib{Molino1988}{book}{
      author={Molino, Pierre},
      title={Riemannian foliations},
      series={Progress in Mathematics},
      volume={73},
      note={Translated from the French by Grant Cairns; With appendices by Cairns, Y. Carri\`ere, \'{E}. Ghys, E. Salem and V. Sergiescu},
      publisher={Birkh\"{a}user Boston, Inc., Boston, MA},
      date={1988},
      pages={xii+339},
      isbn={0-8176-3370-7},
      review={\MR{932463}},
      doi={10.1007/978-1-4684-8670-4},
    }
    
    \bib{Nagano1966}{article}{
      author={Nagano, Tadashi},
      title={Linear differential systems with singularities and an application to transitive Lie algebras},
      journal={J. Math. Soc. Japan},
      volume={18},
      date={1966},
      pages={398--404},
      issn={0025-5645},
      review={\MR{199865}},
      doi={10.2969/jmsj/01840398},
    }

    \bib{Reinhart1959}{article}{
      author={Reinhart, Bruce L.},
      title={Foliated manifolds with bundle-like metrics},
      journal={Ann. of Math. (2)},
      volume={69},
      date={1959},
      pages={119--132},
      issn={0003-486X},
      review={\MR{107279}},
      doi={10.2307/1970097},
    }

    \bib{Samelson2001}{article}{
      author={Samelson, Hans},
      title={Differential forms, the early days; or the stories of Deahna's theorem and of Volterra's theorem},
      journal={Amer. Math. Monthly},
      volume={108},
      date={2001},
      number={6},
      pages={522--530},
      issn={0002-9890},
      review={\MR{1840658}},
      doi={10.2307/2695706}
    }

    \bib{Stefan1974}{article}{
      author={Stefan, Peter},
      title={Accessible sets, orbits, and foliations with singularities},
      journal={Proc. London Math. Soc. (3)},
      volume={29},
      date={1974},
      pages={699--713},
      issn={0024-6115},
      review={\MR{362395}},
      doi={10.1112/plms/s3-29.4.699},
    }

    \bib{Sussmann1973}{article}{
      author={Sussmann, H\'{e}ctor J.},
      title={Orbits of families of vector fields and integrability of distributions},
      journal={Trans. Amer. Math. Soc.},
      volume={180},
      date={1973},
      pages={171--188},
      issn={0002-9947},
      review={\MR{321133}},
      doi={10.2307/1996660},
    }

    \bib{Sussmann2008}{article}{
      author={Sussmann, H\'{e}ctor J.},
      title={Smooth distributions are globally finitely spanned},
      conference={
        title={Analysis and design of nonlinear control systems},
      },
      book={
        publisher={Springer, Berlin},
      },
      date={2008},
      pages={3--8},
      review={\MR{2758930}},
      doi={10.1007/978-3-540-74358-3\_1}
    }

    \bib{Wang2017}{article}{
      author={Roy Wang},
      title={On Integrable Systems \& Rigidity for PDEs with Symmetry},
      date={2017},
      url={https://arxiv.org/abs/1712.00808},
      doi = {10.48550/arXiv.1712.00808},
      note={PhD Thesis}
    }
    
    \bib{Watts2022}{article}{
      author={Watts, Jordan},
      editor={Cerejeiras, Paula},
      editor={Reissig, Michael},
      editor={Sabadini, Irene},
      editor={Toft, Joachim},
      title={The orbit space and basic forms of a proper Lie groupoid},
      book={Current Trends in Analysis, its Applications and Computation},
      date={2022},
      publisher={Springer International Publishing},
      address={Cham},
      pages={513--523},
      isbn={978-3-030-87502-2}
    }

  \end{biblist}
\end{bibdiv}
\mute{}{
\newpage

\section{Extra things}
\label{sec:extra-things}

\begin{definition}
  Given a topological space $X$, a \define{maximal Hausdorff quotient} $\hat{X}$ is a Hausdorff space together with a quotient map $\pi:X \to \hat{X}$ satisfying the following universal property: for any Hausdorff space $Y$, any continuous function $f:X \to Y$ factors uniquely through $\pi$:
  \begin{equation*}
    \begin{tikzcd}
      X \ar[d, "\pi"'] \ar[dr, "f"] & \\
      \hat{X} \ar[r, dashed, "\exists ! \hat{f}"'] & Y.
    \end{tikzcd}
  \end{equation*}
  Maximal Hausdorff quotients always exist, and are unique up to isomorphism.

  \begin{question}
    If $X$ is a non-Hausdorff smooth manifold, is $\hat{X}$ necessarily a Hausdorff smooth manifold?
  \end{question}
\end{definition}

\begin{lemma}
  Suppose $P: \cl{G} \to \cl{H}$ represents a Morita equivalence between source-connected Lie groupoids $\cl{G}$ and $\cl{H}$. Passing to the maximal Hausdorff quotient gives the following diagram:
  \begin{equation*}
    \begin{tikzcd}
      \cl{G} \circlearrowright & P \ar[ddl, "a"'] \ar[ddr, "a'"] \ar[d, "\pi"] & \circlearrowleft \cl{H} \\
      & \hat{P} \ar[dl, "\hat{a}"] \ar[dr, "\hat{a'}"'] & \\
      M & & N.
    \end{tikzcd}
  \end{equation*}
  If $\hat{P}$ is a smooth manifold (meaning $\pi$ is a submersion), then it, together with $\hat{a}$ and $\hat{a'}$, gives a transverse Molino equivalence between the singular foliations induced by $\cl{G}$ and $\cl{H}$.
\end{lemma}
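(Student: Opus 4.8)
The plan is to verify the three requirements of Definition \ref{def:11} for the maps $\hat{a}$ and $\hat{a'}$: that they are surjective submersions, that they have connected fibers, and that they pull $\cl{F}_{\cl{G}}$ and $\cl{F}_{\cl{H}}$ back to a common singular foliation on $\hat{P}$. The whole strategy is to transport the corresponding facts about $a$ and $a'$ on $P$---most of which are already established in the proof of Proposition \ref{prop:7}---downward along $\pi$, using the single hypothesis that $\pi$ is a surjective submersion.

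First I would settle the basic properties of the factored anchors. Since $M$ and $N$ are Hausdorff, the universal property of the maximal Hausdorff quotient supplies continuous factorizations with $\hat{a}\circ\pi = a$ and $\hat{a'}\circ\pi = a'$; because $\pi$ is a surjective submersion and $a, a'$ are smooth and constant on the fibers of $\pi$, the induced maps $\hat{a}, \hat{a'}$ are smooth. Surjectivity is immediate from surjectivity of $a, a'$ and of $\pi$. For the submersion property, writing $q = \pi(p)$ the chain rule gives $da_p = d\hat{a}_q \circ d\pi_p$, and since $a$ is a submersion $d\hat{a}_q$ must be onto; the same holds for $\hat{a'}$. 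For connectedness of fibers, the relation $a = \hat{a}\circ\pi$ together with surjectivity of $\pi$ gives $\hat{a}^{-1}(x) = \pi(a^{-1}(x))$; by the source-connectedness argument in the proof of Proposition \ref{prop:7} the fiber $a^{-1}(x)$ is a single free, transitive $\cl{H}$-orbit, diffeomorphic to a source-fiber of $\cl{H}$, which is connected since $\cl{H}$ is source-connected, so its continuous image $\hat{a}^{-1}(x)$ is connected, and symmetrically for $\hat{a'}$.

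With these properties secured, Proposition \ref{prop:4} guarantees that $\hat{a}^{-1}\cl{F}_{\cl{G}}$ and $\hat{a'}^{-1}\cl{F}_{\cl{H}}$ are genuine singular foliations of $\hat{P}$, whose leaves are the sets $\hat{a}^{-1}(\cl{O})$ and $\hat{a'}^{-1}(\cl{O}')$ for orbits $\cl{O}$ of $\cl{G}$ and $\cl{O}'$ of $\cl{H}$. It remains to match these two partitions, and here I would reuse the orbit correspondence from the proof of Proposition \ref{prop:7}: that argument concerns only the two commuting groupoid actions on $P$ and never uses that $P$ is Hausdorff, so it still delivers $a^{-1}(\cl{O}) = (a')^{-1}(\cl{O}')$ on $P$ for paired orbits. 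Since $a = \hat{a}\circ\pi$ and $a' = \hat{a'}\circ\pi$ give $a^{-1}(\cl{O}) = \pi^{-1}(\hat{a}^{-1}(\cl{O}))$ and $(a')^{-1}(\cl{O}') = \pi^{-1}(\hat{a'}^{-1}(\cl{O}'))$, pushing this identity forward along the surjection $\pi$ yields $\hat{a}^{-1}(\cl{O}) = \hat{a'}^{-1}(\cl{O}')$. Thus the two pullback partitions of $\hat{P}$ agree leaf for leaf, so $\hat{a}^{-1}\cl{F}_{\cl{G}} = \hat{a'}^{-1}\cl{F}_{\cl{H}}$, and $(\hat{P}, \hat{a}, \hat{a'})$ is the desired Molino transverse equivalence.

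The step I expect to demand the most care is the descent of the geometric data through $\pi$: checking that $\pi$ being a submersion really does upgrade the continuous factorizations to smooth submersions, and that the preimages $a^{-1}(x)$ and $a^{-1}(\cl{O})$ are $\pi$-saturated so that their $\pi$-images are exactly the fibers and leaves on $\hat{P}$. By contrast the groupoid-theoretic input---connectedness of the anchor fibers and the orbit correspondence---is already available from Proposition \ref{prop:7} and transfers verbatim precisely because it is insensitive to the non-Hausdorffness of $P$. The content of the lemma is therefore concentrated in the quotient construction, and the hypothesis that $\pi$ is a submersion is exactly what legitimizes every transfer.
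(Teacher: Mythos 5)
Your argument is correct and follows essentially the same route as the paper's proof: both transfer the surjective-submersion property, the fiber connectedness, and the orbit correspondence $a^{-1}(\cl{O}) = (a')^{-1}(\cl{O}')$ from Proposition \ref{prop:7} down to $\hat{P}$ via the identity $\hat{a}^{-1} = \pi \circ a^{-1}$ (using surjectivity of $\pi$). You simply spell out a few steps the paper leaves implicit, such as smoothness of $\hat{a}$ and the chain-rule verification that it is a submersion.
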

\begin{proof}
  We check that $\hat{a}$ is a surjective submersion with connected fibers, and that $\hat{a}^{-1}\cl{F}_{\cl{G}} = \hat{a'}^{-1}\cl{F}_{\cl{H}}$. Because both $a$ and $\pi$ are surjective submersions, commutativity implies that so is $\hat{a}$. For the last two conditions, observe that in terms of images and pre-images,
  \begin{equation*}
   \hat{a}^{-1} = \pi \circ \pi^{-1} \circ \hat{a}^{-1} = \pi \circ a^{-1}, 
  \end{equation*}
  where we use commutativity and the fact $\pi$ is surjective. This immediately shows that $\hat{a}$ has connected fibers (because $a$ does), and, using a similar equality for $\hat{a'}$, that if two orbits correspond under $a$ and $a'$, they also correspond under $\hat{a}$ and $\hat{a'}$.
\end{proof}
}
\end{document}